\newtheorem{theorem}{Theorem}[section]
\newtheorem{lemma}{Lemma}[section]
\providecommand{\customgenericname}{}
\newcommand{\newcustomproblem}[2]{%
	\newenvironment{#1}[1]
	{%
		\renewcommand\customgenericname{#2}%
		\renewcommand\theinnercustomgeneric{##1}%
		\innercustomgeneric
	}
	{\endinnercustomgeneric}
}
\newcommand*{\bqed}{\hfill\ensuremath{\blacksquare}}%
\newcommand{\vertiii}[1]{{\left\vert\kern-0.25ex\left\vert\kern-0.25ex\left\vert #1 
		\right\vert\kern-0.25ex\right\vert\kern-0.25ex\right\vert}}
\def\dd{\, \mathrm{d}}
\begin{document}
	
	
	\title[Numerical methods for Koiter's shells]{Numerical approximation of the solution of Koiter's model for an elliptic membrane shell in absence of friction subjected to an obstacle via the penalty method}
	

	\author[Xin Peng]{Xin Peng}
	\address{Department of Applied Mathematics, School of Sciences, Xi'an University of Technology, P.O.Box 1243,Yanxiang Road NO.58 XI'AN, 710054, Shaanxi Province, China}
	\email{1615113433@qq.com}
	
	\author[Paolo Piersanti]{Paolo Piersanti}
	\address{Department of Mathematics and Institute for Scientific Computing and Applied Mathematics, Indiana University Bloomington, 729 East Third Street, Bloomington, Indiana, USA}
	\email[Corresponding author]{ppiersan@iu.edu}
	
	\author[Xiaoqin Shen]{Xiaoqin Shen}
	\address{Department of Applied Mathematics, School of Sciences, Xi'an University of Technology, P.O.Box 1243,Yanxiang Road NO.58 XI'AN, 710054, Shaanxi Province, China}
	\email{xqshen@xaut.edu.cn}

\today

\begin{abstract}
This paper is devoted to the analysis of a numerical scheme based on the Finite Element Method for approximating the solution of Koiter's model for a linearly elastic elliptic membrane shell subjected to remaining confined in a prescribed half-space.

First, we show that the solution of the obstacle problem under consideration is uniquely determined and satisfies a set of variational inequalities which are governed by a fourth order elliptic operator, and which are posed over a non-empty, closed, and convex subset of a suitable space.

Second, we show that the solution of the obstacle problem under consideration can be approximated by means of the penalty method.

Third, we show that the solution of the corresponding penalised problem is more regular up to the boundary.

Fourth, we write down the mixed variational formulation corresponding to the penalised problem under consideration, and we show that the solution of the mixed variational formulation is more regular up to the boundary as well.
In view of this result concerning the augmentation of the regularity of the solution of the mixed penalised problem, we are able to approximate the solution of the one such problem by means of a Finite Element scheme.

Finally, we present numerical experiments corroborating the validity of the mathematical results we obtained.

\smallskip

\noindent \textbf{Keywords.} Obstacle problems $\cdot$ Koiter's model $\cdot$ Variational Inequalities $\cdot$ Elasticity theory $\cdot$ Finite Difference Quotients $\cdot$ Penalty Method $\cdot$ Finite Element Method

\smallskip
\noindent \textbf{MSC 2010.} 35J86, 47H07, 65M60, 74B05.
\end{abstract}

\maketitle

\tableofcontents

\section{Introduction}
\label{sec0}
This paper is devoted to the numerical analysis of Koiter's model for a linearly elastic elliptic membrane shell subjected to remaining confined in a prescribed half space. The problem under consideration is formulated in terms of a set of variational inequalities posed over a non-empty, closed, and convex subset of a suitable Sobolev space. The differential operator associated with this formulation is a fourth order elliptic operator.
The constraint according to which the shell must remain confined in a prescribed half space renders the problem under consideration an obstacle problem.

In a seminal paper published in~1981, Scholz~\cite{Scholz1987} established the convergence of a Finite Element Method for approximating the solution of a biharmonic model subjected to a double obstacle. The convergence of the numerical scheme was proved for the mixed formulation of the corresponding penalised problem, by resorting to the augmentation of regularity argument for scalar fourth order problems (cf., Lemma~3 of~\cite{Scholz1987}).

The numerical analysis of obstacle problems governed by fourth order elliptic operators has also been studied by Brenner and her collaborators (cf., e.g., the paper~\cite{Brenner2013} and the references therein) in the case where the objective function is scalar-valued. In the previously cited paper, Brenner and her collaborators resorted to Enriching Operators in order to overcome the difficulties related to the fact that the solution of the fourth-order problems they considered was of class $H^2_0$.

The results obtained by Brenner and her collaborators were later on generalised to the vector-valued case in the paper~\cite{PS}, where a linearly elastic shallow shell model subjected to an obstacle was considered. For this particular model, the solution was proved to be a Kirchhoff-Love field~\cite{Leger2008,Leger2010}. The latter geometrical property allows us to \emph{separate} the behaviour of the transverse component of the solution from the behaviour of the tangential components of the solution in the constitutive equations, and to formulate the constraint in terms of the sole transverse component of the vector field under consideration. By so doing, it was thus possible to implement the technique based on Enriching Operators for approximating the solution of the obstacle problem for linearly elastic shallow shells studied in~\cite{PS} via a Finite Element Method.
Critical to establishing the convergence of the numerical scheme is the augmentation-of-regularity result proved in~\cite{Pie2020-1}.

The numerical analysis of an obstacle problem governed by a set of fourth order variational inequalities whose solution is a vector field, and for which the constraint bears at once on all the components of one such vector field is more difficult to study, and the method based on Enriching Operators is not applicable. To-date, the contrivance of a Finite Element scheme for approximating the solution of a fourth order variational problem whose solution is a vector field appears to be virgin research territory. The purpose of this paper is to make progress in this specific direction.

This paper is divided into ten sections (including this one). In section~\ref{sec1} we lay out the analytical and geometrical notation.

In section~\ref{sec2} we formulate a three-dimensional obstacle problem for a ``general'' linearly elastic shell of Koiter's type.

In section~\ref{sec3} we specialise the type of linearly elastic shells under consideration, by restricting ourselves to the case where a boundary condition of place is imposed on the entire lateral contour, and the middle surface is elliptic. In this case, the linearly elastic shell under consideration becomes a linearly elastic elliptic membrane shell. We then recall that the solution for this  problem satisfies a set of variational inequalities associated with a fourth order elliptic operator.

In section~\ref{sec:penalty}, we penalise the formulation devised in the previous section, as it is well-known that a penalised model is easier to address numerically.

In section~\ref{sec:aug-interior}, we establish the augmentation of regularity up to the boundary of the penalised problem. The obtained higher regularity will turn out to be critical for devising a convergent Finite Element Method for the problem under consideration.

In section~\ref{approx:original} we present an alternative formulation of the variational inequalities considered in section~\ref{sec3}.
The alternative formulation we are here presenting is given in terms of the Lagrangian functional associated with the problem stated in section~\ref{sec3}. We show that, if the Lagrangian functional we consider admits at least one saddle point, the first component of such saddle points is uniquely determined and coincides with the solution of the variational inequalities stated in section~\ref{sec3}. We point out that this section is ancillary,and the final results concerning the convergence of the Finite Element scheme analysed in the remainder of the paper are independent of the results established in section~\ref{approx:original}.

In section~\ref{mixed:penalty} we resort to the intrinsic theory devised by Blouza \& Le Dret as to define a new penalised problem which, this time, takes into account two layers of penalisation: one for relaxing the enforcement of the geometrical constraint, and one for relaxing the regularity requirement on the transverse component of the solution.
Thanks to the results established by Blouza and his collaborators in~\cite{Blouza2016}, we note that the new penalised variational problem coincides with the mixed formulation of the penalised problem introduced in section~\ref{sec:penalty}. We note in passing that, differently from~\cite{Scholz1987}, we exploit the augmented regularity in a different way as the vectorial nature of the problem prevents from straightforwardly applying the estimates that led to the conclusion in~\cite{Scholz1987}.

In section~\ref{approx:penalty}, we establish the convergence of the solution of the discrete version of the doubly penalised problem introduced in section~\ref{mixed:penalty} by means of a Finite Element Method.

Finally, in section~\ref{numerics} we present numerical experiments meant to corroborate the theoretical results established in the previous sections.

\section{Background and notation}
\label{sec1}

For a complete overview about the classical notions of differential geometry used in this paper, see, e.g.~\cite{Ciarlet2000} or \cite{Ciarlet2005}.

Greek indices, except $\varepsilon$ and $\kappa$, vary in the set $\{1,2\}$. Latin indices, except when they are used for ordering sequences, vary in the set $\{1,2,3\}$. Unless differently specified, the Einstein summation convention with respect to repeated indices is used in conjunction with the previous two rules.
 
We take a \emph{real three-dimensional affine Euclidean space}, i.e., a set in which a point $O \in\mathbb{R}^3$ has been chosen as the \emph{origin} and with which a \emph{real three-dimensional Euclidean space}, as a model of the three-dimensional ``physical'' space $\mathbb{R}^3$. We denote the three-dimensional affine Euclidean space by $\mathbb{E}^3$. The space $\mathbb{E}^3$ is equipped with an \emph{orthonormal basis} consisting of three vectors $\bm{e}^i$, whose components are denoted by $e^i_j=\delta^i_j$. 

Defining $\mathbb{R}^3$ as an affine Euclidean space means that any point $x \in \mathbb R^3$ corresponds to a uniquely determined vector $\boldsymbol{Ox} \in \mathbb{E}^3$. The origin $O \in \mathbb{R}^3$ and the orthonormal vectors $\bm{e}^i \in \mathbb{E}^3$ form a \emph{Cartesian frame} in $\mathbb{R}^3$.

After choosing a Cartesian frame,we could identify any point $x \in \mathbb{R}^3$ with the vector $\boldsymbol{Ox}=x_i \bm{e}^i \in \mathbb{E}^3$. The three components $x_i$ of the vector $\boldsymbol{Ox}$ expressed in terms of the orthonormal frame $\{\bm{e}^i\}_{i=1}^3$ are the \emph{Cartesian coordinates} of $x \in \mathbb{R}^3$, or the \emph{Cartesian components} of $\boldsymbol{Ox} \in \mathbb{E}^3$. 

In view of the previous association, we are allowed to identify a set in $\mathbb{R}^3$ with a ``physical'' body in the Euclidean space $\mathbb{E}^3$.

The standard inner product and vector product of any two vectors $\bm{u}, \bm{v} \in \mathbb{E}^3$ are respectively denoted by $\bm{u} \cdot \bm{v}$ and $\bm{u} \wedge \bm{v}$; the Euclidean norm of a vector $\bm{u} \in \mathbb{E}^3$ is denoted by $\left|\bm{u} \right|$. The notation $\delta^j_i$ designates the standard Kronecker symbol.

Let $\Omega$ be an open subset of $\mathbb{R}^N$, where $N \ge 1$.
Denote the usual Lebesgue and Sobolev spaces by $L^2(\Omega)$, $L^1_{\textup{loc}}(\Omega)$, $H^1(\Omega)$, $H^1_0 (\Omega)$, $H^1_{\textup{loc}}(\Omega)$. The notation $\mathcal{D}(\Omega)$ indicates the space of all functions that are infinitely differentiable over $\Omega$ and have compact support in $\Omega$. The symbol $\left\| \cdot \right\|_X$ denotes the norm of a normed vector space $X$. Spaces of vector-valued functions are written in boldface.
The Euclidean norm of any point $x \in \Omega$ is denoted by $|x|$. 

The boundary $\Gamma$ of an open subset $\Omega$ in $\mathbb{R}^N$ is said to be Lipschitz-continuous if the following conditions are satisfied (cf., e.g., Section~1.18 of~\cite{PGCLNFAA}): Given an integer $s\ge 1$, there exist constants $\alpha_1>0$ and $L>0$, a finite number of local coordinate systems, with coordinates 
$$
\bm{\phi}'_r=(\phi_1^r, \dots, \phi_{N-1}^r) \in \mathbb{R}^{N-1} \textup{ and } \phi_r=\phi_N^r, 1 \le r \le s,
$$ 
sets
$$
\tilde{\omega}_r:=\{\bm{\phi}_r \in\mathbb{R}^{N-1}; |\bm{\phi}_r|<\alpha_1\},\quad 1 \le r \le s,
$$
and corresponding functions
$$
\tilde{\theta}_r:\tilde{\omega}_r \to\mathbb{R},\quad 1 \le r \le s,
$$
such that
$$
\Gamma=\bigcup_{r=1}^s \{(\bm{\phi}'_r,\phi_r); \bm{\phi}'_r \in \tilde{\omega}_r \textup{ and }\phi_r=\tilde{\theta}_r(\bm{\phi}'_r)\},
$$
and 
$$
|\tilde{\theta}_r(\bm{\phi}'_r)-\tilde{\theta}_r(\bm{\upsilon}'_r)|\le L |\bm{\phi}'_r-\bm{\upsilon}'_r|, \textup{ for all }\bm{\phi}'_r, \bm{\upsilon}'_r \in \tilde{\omega}_r, \textup{ and all }1\le r\le s.
$$

We observe that the second last formula takes into account overlapping local charts, while the last set of inequalities expresses the Lipschitz continuity of the mappings $\tilde{\theta}_r$.

An open set $\Omega$ is said to be locally on the same side of its boundary $\Gamma$ if, in addition, there exists a constant $\alpha_2>0$ such that
\begin{align*}
	\{(\bm{\phi}'_r,\phi_r);\bm{\phi}'_r \in\tilde{\omega}_r \textup{ and }\tilde{\theta}_r(\bm{\phi}'_r) < \phi_r < \tilde{\theta}_r(\bm{\phi}'_r)+\alpha_2\}&\subset \Omega,\quad\textup{ for all } 1\le r\le s,\\
	\{(\bm{\phi}'_r,\phi_r);\bm{\phi}'_r \in\tilde{\omega}_r \textup{ and }\tilde{\theta}_r(\bm{\phi}'_r)-\alpha_2 < \phi_r < \tilde{\theta}_r(\bm{\phi}'_r)\}&\subset \mathbb{R}^N\setminus\overline{\Omega},\quad\textup{ for all } 1\le r\le s.
\end{align*}

A \emph{domain in} $\mathbb{R}^N$ is a bounded and connected open subset $\Omega$ of $\mathbb{R}^N$, whose boundary $\partial \Omega$ is Lipschitz-continuous, the set $\Omega$ being locally on a single side of $\partial \Omega$.

Let $\omega$ be a domain in $\mathbb{R}^2$ with boundary $\gamma:=\partial\omega$,  and let $\omega_1 \subset \omega$. The notation $\omega_1 \subset \subset \omega$ means that $\overline{\omega_1} \subset \omega$ and $\textup{dist}(\gamma,\partial\omega_1):=\min\{|x-y|;x \in \gamma \textup{ and } y \in \partial\omega_1\}>0$.

Let $y = (y_\alpha)$ denote a generic point in $\omega$, and let $\partial_\alpha := \partial / \partial y_\alpha$. An injective mapping $\bm{\theta} \in \mathcal{C}^1(\overline{\omega}; \mathbb{E}^3)$ is called an \emph{immersion} if the two vectors
$$
\bm{a}_\alpha (y) := \partial_\alpha \bm{\theta}(y)
$$
are linearly independent at each point $y \in \overline{\omega}$. Then the set $\bm{\theta} (\overline{\omega})$ is a \emph{surface in} $\mathbb{E}^3$, equipped with $y_1, y_2$ as its \emph{curvilinear coordinates}. Given any point $y\in \overline{\omega}$, the linear combinations of the vectors $\bm{a}_\alpha (y)$ generate the \emph{tangent plane} to the surface $\bm{\theta}(\overline{\omega})$ at the point $\bm{\theta} (y)$. The unit-norm vector
$$
\bm{a}_3 (y) := \frac{\bm{a}_1(y) \wedge \bm{a}_2 (y)}{|\bm{a}_1(y) \wedge \bm{a}_2 (y)|}
$$
is orthogonal to the surface $\bm{\theta} (\overline{\omega})$ at the point $\bm{\theta}(y)$. The three vectors $\{\bm{a}_i(y)\}_{i=1}^3$ constitute the \emph{covariant} basis at the point $\bm{\theta}(y)$, while the three vectors $\{\bm{a}^j(y)\}_{j=1}^3$ defined by means of the formulas
$$
\bm{a}^j(y) \cdot \bm{a}_i (y) = \delta^j_i
$$
constitute the \emph{contravariant} basis at $\bm{\theta}(y)$. Observe that the vectors $\bm{a}^\beta(y)$ generate the tangent plane to the surface $\bm{\theta}(\overline{\omega})$ at the point $\bm{\theta}(y)$ and that $\bm{a}^3(y) = \bm{a}_3(y)$.

The \emph{first fundamental form} of the surface $\bm{\theta}(\overline{\omega})$ can be thus expressed via its \emph{covariant components}
$$
a_{\alpha\beta} := \bm{a}_\alpha \cdot \bm{a}_\beta = a_{\beta \alpha} \in \mathcal{C}^0 (\overline{\omega}),
$$
or via its \emph{contravariant components}
$$
a^{\alpha\beta}:= \bm{a}^\alpha \cdot \bm{a}^\beta = a^{\beta \alpha}\in \mathcal{C}^0(\overline{\omega}).
$$

Observe that the symmetric matrix field $(a^{\alpha\beta})$ coincides with the inverse of the positive-definite matrix field $(a_{\alpha\beta})$.
Moreover, we have that $\bm{a}^\beta = a^{\alpha\beta}\bm{a}_\alpha$, that $\bm{a}_\alpha = a_{\alpha\beta} \bm{a}^\beta$, and that the \emph{area element} associated with the surface $\bm{\theta}(\overline{\omega})$ is given at each point $\bm{\theta}(y), y \in \overline{\omega}$, by $\sqrt{a(y)} \dd y$, where
$$
a := \det(a_{\alpha\beta}) \in \mathcal{C}^0(\overline{\omega})
$$
is such that $a_0 \le a(y) \le a_1$, for all $y\in\overline{\omega}$ for some $a_0, a_1>0$.

Let $\bm{\theta} \in \mathcal{C}^2(\overline{\omega};\mathbb{E}^3)$ be an immersion. The \emph{second fundamental form} of the surface $\bm{\theta}(\overline{\omega})$ can be defined via its \emph{covariant components} by means of the following formula:
$$
b_{\alpha\beta}:= \partial_\alpha \bm{a}_\beta \cdot \bm{a}_3 = -\bm{a}_\beta \cdot \partial_\alpha \bm{a}_3 = b_{\beta \alpha} \in \mathcal{C}^0(\overline{\omega}),
$$

An alternative way of defining the second fundamental form of the surface $\bm{\theta}(\overline{\omega})$ is via its \emph{mixed components}:
$$
b^\beta_\alpha := a^{\beta \sigma} b_{\alpha \sigma} \in \mathcal{C}^0(\overline{\omega}).
$$

The \emph{Christoffel symbols} associated with the immersion $\bm{\theta}$ are defined by:
$$
\Gamma^\sigma_{\alpha\beta}:= \partial_\alpha \bm{a}_\beta \cdot \bm{a}^\sigma = \Gamma^\sigma_{\beta \alpha} \in \mathcal{C}^0 (\overline{\omega}).
$$

For each $y \in \overline \omega$, the \emph{Gaussian curvature} (or \emph{total curvature}) at each point $\bm{\theta}(y)$, of the surface $\bm{\theta} (\overline{\omega})$ is defined by:
$$
K(y) := \frac{\det (b_{\alpha\beta} (y))}{\det (a_{\alpha\beta} (y))} = \det \left( b^\beta_\alpha (y)\right).
$$

Observe that the denominator in the definition of Gaussian curvature at a point never vanishes, since the mapping $\bm{\theta}$ is assumed to be an immersion. Additionally, observe that the value of the Gaussian curvature at the point $\bm{\theta}(y)$, $y\in\overline{\omega}$, coincides with the product of the two principal curvatures at this point.

Let us now recall the definition of \emph{elliptic surface}: Let $\omega$ be a domain in $\mathbb{R}^2$. Then a surface $\bm{\theta}(\overline{\omega})$ defined by means of an immersion $\bm{\theta} \in \mathcal{C}^2(\overline{\omega};\mathbb{E}^3)$ is said to be \emph{elliptic} if its Gaussian curvature $K$ is everywhere strictly positive in $\overline{\omega}$, or equivalently, if there exists a constant $K_0$ such that:
$$
0 < K_0 \le K (y), \text{ for all } y \in \overline{\omega}.
$$

Let
$\bm{\theta} \in \mathcal{C}^2(\overline{\omega}; \mathbb{E}^3)$ be an immersion. Given a
vector field $\bm{\eta} = (\eta_i) \in \mathcal{C}^1(\overline{\omega};
\mathbb{R}^3)$, one can regard the vector field
$\tilde{\bm{\eta}} := \eta_i \bm{a}^i$
as a \emph{displacement field of the surface} $\bm{\theta}(\overline{\omega})$ defined by means of its \emph{covariant components} $\eta_i$ over the vectors $\bm{a}^i$ of the contravariant basis along the surface. 

If the norms $\left\| \eta_i \right\|_{\mathcal{C}^1(\overline{\omega})}$ are sufficiently small, the mapping $(\bm{\theta} + \eta_i \bm{a}^i) \in \mathcal{C}^1(\overline{\omega}; \mathbb{E}^3)$ is also an immersion. Therefore, the set $(\bm{\theta} + \eta_i \bm{a}^i) (\overline{\omega})$, called the \emph{deformed surface} corresponding to the displacement field $\tilde{\bm{\eta}} = \eta_i \bm{a}^i$, is also a surface in $\mathbb{E}^3$, equipped with the same curvilinear coordinate system as the one of the surface $\bm{\theta}(\overline{\omega})$. This alternative \emph{intrinsic} formulation, which is due to Blouza \& Le Dret~\cite{BlouzaLeDret1999} considers the \emph{whole} displacement at once, rather than focusing on the, single covariant components.

It is thus possible to define the first fundamental form of the deformed surface in terms of its covariant components
$$
a_{\alpha\beta} (\bm{\eta}) := (\bm{a}_\alpha + \partial_\alpha \tilde{\bm{\eta}}) \cdot (\bm{a}_\beta + \partial_\beta \tilde{\bm{\eta}}),
$$
and it is also possible to define the second fundamental form of the deformed surface via its covariant components
$$
b_{\alpha\beta} (\bm{\eta}) := \partial_\alpha (\bm{a}_\beta+\partial_\beta \tilde{\bm{\eta}}) \cdot \dfrac{(\bm{a}_1 +\partial_1 \tilde{\bm{\eta}}) \wedge (\bm{a}_2 +\partial_2 \tilde{\bm{\eta}})}{|(\bm{a}_1 +\partial_1 \tilde{\bm{\eta}}) \wedge (\bm{a}_2 +\partial_2 \tilde{\bm{\eta}})|}.
$$

The \emph{linear part with  respect to} $\tilde{\bm{\eta}}=\eta_i \bm{a}^i$ in the difference $\frac{1}{2}(a_{\alpha\beta}(\bm{\eta}) - a_{\alpha\beta})$ is referred to as \emph{linearised change of metric tensor} (or \emph{linearised strain tensor}) associated with the displacement vector field $\tilde{\bm{\eta}}=\eta_i \bm{a}^i$.
The covariant components of the linearised change of metric tensor are then given by the following formula (viz. Lemma~2 of~\cite{BlouzaLeDret1999}):
\begin{equation}
\label{cmt}
\gamma_{\alpha\beta}(\bm{\eta}) := \dfrac{1}{2}(\bm{a}_\alpha \cdot \partial_\beta \tilde{\bm{\eta}} + \partial_\alpha \tilde{\bm{\eta}} \cdot \bm{a}_\beta) = \frac12 (\partial_\beta \eta_\alpha + \partial_\alpha \eta_\beta ) - \Gamma^\sigma_{\alpha\beta} \eta_\sigma - b_{\alpha\beta} \eta_3 = \gamma_{\beta \alpha} (\bm{\eta}).
\end{equation}

The \emph{linear part with  respect to} $\tilde{\bm{\eta}}=\eta_i \bm{a}^i$ in the difference $\frac{1}{2}(b_{\alpha\beta}(\bm{\eta}) - b_{\alpha\beta})$ is known as the \emph{linearised change of curvature tensor} associated with the displacement field  $\tilde{\bm{\eta}}=\eta_i \bm{a}^i$. The covariant components of the linearised change of curvature tensor are then given by the following formula (viz. Lemma~2 of~\cite{BlouzaLeDret1999}):
\begin{equation}
\label{cct}
\begin{aligned}
	\rho_{\alpha\beta}(\bm{\eta}) &= (\partial_{\alpha\beta}\tilde{\bm{\eta}}-\Gamma_{\alpha\beta}^\sigma \partial_\sigma \tilde{\bm{\eta}})\cdot \bm{a}_3\\
	&= \partial_{\alpha\beta}\eta_3 -\Gamma_{\alpha\beta}^\sigma \partial_\sigma\eta_3 -b_\alpha^\sigma b_{\sigma\beta}\eta_3
	+b_\alpha^\sigma(\partial_\beta \eta_\sigma-\Gamma_{\beta\sigma}^\tau\eta_\tau)+b_\beta^\tau(\partial_\alpha\eta_\tau-\Gamma_{\alpha\tau}^\sigma\eta_\sigma)\\
	&\quad+(\partial_\alpha b_\beta^\tau+\Gamma_{\alpha\sigma}^\tau b_\beta^\sigma-\Gamma_{\alpha\beta}^\sigma b_\sigma^\tau)\eta_\tau= \rho_{\beta \alpha}(\bm{\eta}).
\end{aligned}
\end{equation}

\section{An obstacle problem for a ``general'' Koiter's shell} 
\label{sec2}

Consider a domain $\omega$ in $\mathbb{R}^2$, and recall that $\gamma:= \partial \omega$. Let $\gamma_0$ be a non-empty relatively open subset of $\gamma$. For each $\varepsilon > 0$, define the sets:
$$
\Omega^\varepsilon = \omega \times \left] - \varepsilon , \varepsilon \right[ \quad\textup{ and }\quad \Gamma^\varepsilon_0 := \gamma_0 \times \left] - \varepsilon , \varepsilon \right[.
$$

By $x^\varepsilon = (x^\varepsilon_i)$, we designate an arbitrary point in the set $\overline{\Omega^\varepsilon}$. Define the symbol $\partial^\varepsilon_i := \partial / \partial x^\varepsilon_i$. It is convenient to write the coordinates of a generic point $x^\varepsilon\in\Omega^\varepsilon$ as $x^\varepsilon_\alpha = y_\alpha$ and $\partial^\varepsilon_\alpha = \partial_\alpha$.

Let $\bm{\theta}\in\mathcal{C}^3(\overline{\omega};\mathbb{E}^3)$ be an immersion, and let $\varepsilon>0$ denote the constant half-thickness of a \emph{linearly elastic shell} with \emph{middle surface} $\bm{\theta}(\overline{\omega})$. The \emph{reference configuration} of the shell corresponds to the set $\bm{\Theta}(\overline{\Omega^\varepsilon})$, where the mapping $\bm{\Theta}: \overline{\Omega^\varepsilon} \to \mathbb{E}^3$ is given by:
$$
\bm{\Theta} (x^\varepsilon) := \bm{\theta} (y) + x^\varepsilon_3 \bm{a}^3(y) \text{ at each point } x^\varepsilon = (y, x^\varepsilon_3) \in \overline{\Omega^\varepsilon}.
$$

If $\varepsilon > 0$ is small enough, an application of the implicit function theorem shows that such a mapping $\bm{\Theta} \in \mathcal{C}^2(\overline{\Omega^\varepsilon}; \mathbb{E}^3)$ is an \emph{immersion}, in the sense that the three vectors
$$
\bm{g}^\varepsilon_i (x^\varepsilon) := \partial^\varepsilon_i \bm{\Theta} (x^\varepsilon)
$$
are linearly independent at each point $x^\varepsilon \in \overline{\Omega^\varepsilon}$ (viz., e.g., Theorem~3.1-1 of~\cite{Ciarlet2000}).
These vectors  then constitute the \emph{covariant basis} at the point $\bm{\Theta}(x^\varepsilon)$, while the three vectors $\bm{g}^{j, \varepsilon} (x^\varepsilon)$ defined by the relations
$$
\bm{g}^{j,\varepsilon}(x^\varepsilon) \cdot \bm{g}^\varepsilon_i(x^\varepsilon)= \delta^j_i
$$
constitute the \emph{contravariant basis} at the same point. It will be implicitly assumed in the sequel that $\varepsilon > 0$ \emph{is small enough so to apply Theorem~3.1-1 of~\cite{Ciarlet2000} and to infer that $\bm{\Theta}:\overline{\Omega^\varepsilon} \to \mathbb{E}^3$} is an \emph{immersion}.

It will also be implicitly assumed that the immersion $\bm{\theta}\in \mathcal{C}^3(\overline{\omega};\mathbb{E}^3)$ is \emph{injective} and that $\varepsilon > 0$ \emph{is small enough so that $\bm{\theta} : \overline{\Omega^\varepsilon} \to \mathbb{E}^3$} is \emph{a $\mathcal{C}^2$-diffeomorphism onto its image}.

We also assume that the shell is made of a \emph{homogeneous} and \emph{isotropic linearly elastic material} and that its reference configuration $\bm{\theta}(\overline{\Omega^\varepsilon})$ is a \emph{natural state}, in the sense that it is stress free. As a result of these assumptions, the elastic behaviour of this elastic material is completely characterised by its two \emph{Lam\'e constants} $\lambda \ge 0$ and $\mu >0$ (see, e.g., Section~3.8 in~\cite{Ciarlet1988}).

We also assume that the linearly elastic shell under consideration is subjected to \emph{applied body forces} whose density per unit volume is defined by means of its \emph{covariant} components $f^{i, \varepsilon} \in L^2(\Omega^\varepsilon)$, i.e., over the vectors $\bm{g}_i^\varepsilon$ of the covariant bases, and to a \emph{homogeneous boundary condition of place} along the portion $\Gamma^\varepsilon_0$ of its lateral face, i.e., the admissible displacement fields vanish on $\Gamma^\varepsilon_0$.

A commonly used \emph{two-dimensional} set of equations for modelling such a shell (``two-dimensional'' in the sense that it is posed over $\overline{\omega}$ instead of $\overline{\Omega^\varepsilon}$) was proposed in 1970 by Koiter~\cite{Koiter1970}. We now describe the modern formulation of this model.

First, define the space
$$
\bm{V}_K (\omega):= \{\bm{\eta}=(\eta_i) \in H^1(\omega)\times H^1(\omega)\times H^2(\omega);\eta_i=\partial_{\nu}\eta_3=0 \textup{ on }\gamma_0\},
$$
where the symbol $\partial_{\nu}$ denotes the \emph{outer unit normal derivative operator along} $\gamma$, and define the norm $\|\cdot\|_{\bm{V}_K(\omega)}$ by
$$
\|\bm{\eta}\|_{\bm{V}_K(\omega)}:=\left\{\sum_{\alpha}\|\eta_\alpha\|_{1,\omega}^2+\|\eta_3\|_{2,\omega}^2\right\}^{1/2}\quad\mbox{ for each }\bm{\eta}=(\eta_i)\in \bm{V}_K(\omega).
$$

Next, define the \emph{fourth-order two-dimensional elasticity tensor of the shell}, viewed here as a two-dimensional linearly elastic body, by means of its contravariant components
$$
a^{\alpha\beta\sigma\tau} := \frac{4\lambda \mu}{\lambda + 2 \mu} a^{\alpha\beta} a^{\sigma\tau} + 2\mu \left(a^{\alpha \sigma} a^{\beta \tau} + a^{\alpha \tau} a^{\beta \sigma}\right).
$$

Finally, define the bilinear forms $B_M(\cdot,\cdot)$ and $B_F(\cdot,\cdot)$ by
\begin{align*}
B_M(\bm{\xi}, \bm{\eta})&:=\int_\omega a^{\alpha\beta\sigma\tau} \gamma_{\sigma\tau}(\bm{\xi}) \gamma_{\alpha\beta}(\bm{\eta}) \sqrt{a} \dd y,\\
B_F(\bm{\xi}, \bm{\eta})&:=\dfrac13 \int_\omega a^{\alpha\beta\sigma\tau} \rho_{\sigma\tau}(\bm{\xi}) \rho_{\alpha\beta}(\bm{\eta}) \sqrt{a} \dd y,
\end{align*}
for each $\bm{\xi}=(\xi_i)\in \bm{V}_K(\omega)$ and each $\bm{\eta}=(\eta_i) \in \bm{V}_K(\omega)$.
and define the linear form $\ell^\varepsilon$ by
$$
\ell^\varepsilon(\bm{\eta}):=\int_\omega p^{i,\varepsilon} \eta_i \sqrt{a} \dd y, \textup{ for each } \bm{\eta}=(\eta_i) \in \bm{V}_K(\omega), 
$$
where $p^{i,\varepsilon}(y):=\int_{-\varepsilon} ^\varepsilon f^{i,\varepsilon}(y,x_3) \dd x_3$ at each $y \in \omega$.

Then the \emph{total energy} of the shell is the \emph{quadratic functional} $J^\varepsilon:\bm{V}_K(\omega) \to \mathbb{R}$ defined by
$$
J^\varepsilon(\bm{\eta}):=\dfrac{\varepsilon}{2}B_M(\bm{\eta},\bm{\eta})+\dfrac{\varepsilon^3}{2}B_F(\bm{\eta},\bm{\eta})-\ell^\varepsilon(\bm{\eta})\quad\textup{ for each }\bm{\eta} \in \bm{V}_K(\omega).
$$
The term $\displaystyle{\dfrac{\varepsilon}{2}B_M(\cdot,\cdot)}$ and $\displaystyle{\dfrac{\varepsilon^3}{2}B_F(\cdot,\cdot)}$ respectively represent the \emph{membrane part} and the \emph{flexural part} of the total energy, as aptly recalled by the subscripts ``$M$'' and ``$F$''.

In Koiter's model, the unknown ``two-dimensional'' displacement field $\zeta^\varepsilon_i \bm{a}^i:\overline{\omega} \to \mathbb{E}^3$ of the middle surface $\bm{\theta}(\overline{\omega})$ of the shell is such that the vector field $\bm{\zeta}^\varepsilon:=(\zeta^\varepsilon_i)$ should be the solution of the following \emph{problem}: \emph{Find a vector field $\bm{\zeta}^\varepsilon:\overline{\omega} \to \mathbb{R}^3$ that satisfies
	$$
	\bm{\zeta}^\varepsilon \in \bm{V}_K(\omega) \quad\textup{ and }\quad J^\varepsilon(\bm{\zeta}^\varepsilon)=\inf_{\bm{\eta} \in \bm{V}_K(\omega)} J^\varepsilon(\bm{\eta}),
	$$
	\emph{or equivalently}, find $\bm{\zeta}^\varepsilon \in \bm{V}_K(\omega)$ that satisfies the following variational equations:
	$$
	\varepsilon B_M(\bm{\zeta}^\varepsilon, \bm{\eta})+\varepsilon^3 B_F(\bm{\zeta}^\varepsilon, \bm{\eta})=\ell^\varepsilon(\bm{\eta}) \quad\textup{ for all }\bm{\eta} \in \bm{V}_K(\omega).
	$$}
As first shown in~\cite{BerCia1976} (see also~\cite{BerCiaMia1994}), this problem has one and only one solution.

Assume now that the shell is subjected to the following \emph{confinement condition}: The unknown displacement field $\bm{\zeta}^\varepsilon_i \bm{a}^i$ of the middle surface of the shell must be such that the corresponding deformed middle surface remains in a given half-space, of the form
$$
\mathbb{H}:=\{x \in \mathbb{E}^3; \boldsymbol{Ox} \cdot \bm{q} \ge 0\},
$$
where $\bm{q} \in \mathbb{E}^3$ is a given unit-norm vector. Equivalently, the deformed middle surface must constantly remain in one of the half-spaces defined by the plane $\{x \in \mathbb{E}^3; \boldsymbol{Ox} \cdot \bm{q} \ge 0\}$, which thus plays the role of the obstacle. Clearly, one will need to assume that the middle surface satisfies this confinement condition when no forces are applied: 
$$
\bm{\theta}(\overline{\omega}) \subset \mathbb{H}.
$$

It is to be emphasised that the above confinement condition \emph{considerably departs} from the usual \emph{Signorini condition} favoured by most authors, who usually require that only the points of the undeformed and deformed lower face $\omega \times \{-\varepsilon\}$ of the reference configuration satisfy the confinement condition (see, e.g., \cite{Leger2008}, \cite{LegMia2018}, \cite{MezChaBen2020}, \cite{Rodri2018}). 

The confinement condition we are considering in this paper is more physically realistic than the Signorini condition, which imposes the constraint only on the lower face of the reference configuration and does not thus prevent, \emph{a priori}, the interior points of the deformed reference configuration from crossing the plane $\{\bm{Ox}\in \mathbb{E}^3; \; \bm{Ox} \cdot \bm{q} = 0\}$ constituting the obstacle. 

The mathematical models characterised by the confinement condition introduced beforehand, confinement condition which was originally considered in the seminar paper by Brezis \& Stampacchia~\cite{BrezisStampacchia1968} and is also considered in the seminal paper~\cite{Leger2008} in a different geometrical framework, do not take any traction forces into account. Indeed, by Classical Mechanics, there could be no traction forces applied to the portion of the three-dimensional shell boundary that engages contact with the obstacle. 

Friction is not considered in the context of this analysis.

Frictionless contact problems arise in mechanical engineering whenever there is need to predict the stress or deformation of the moving parts of machines or vehicles. The first example of frictionless contact problem in elasticity was investigated in~1881 by Hertz. Hertz noticed that when two elastic bodies with smooth surface come into contact and the contact area is much smaller than the radius of each body (as in~\cite{PGCFEM}, by \emph{radius of a body}, we define the radius of the largest ball contained in an elastic body), then the nonlinear non-penetration boundary conditions are confined to a small region of predictable shape and it is possible to simplify the conditions of equilibrium near the contact so that they can be solved analytically. Hertz’s results are still relevant for the design of bearings, gears, and other bodies when two smooth and nonconforming surfaces come into contact, the strains are small and within the elastic limit, the area of contact is much smaller than the characteristic radius of the bodies, and the friction can be neglected. For a more detailed applicability of frictionless contact problems to Industrial and Engineering problems, we refer the reader to Chapter~1 of~\cite{DKSV17}. Examples of frictionless contact problems in virus mechanics were recently considered in the papers~\cite{PWDT2D,PWDT3D}. Frictionless contact problems were also studied in the context of glaciology in the papers~\cite{Diaz2002,JouvBuel2012,PT2023}.

Differently from the \emph{Signorni condition}, the confinement condition here considered is more suitable in the context of multi-scales multi-bodies problems like, for instance, the study of the motion of the human heart valves, conducted by Quarteroni and his associates in~\cite{Quarteroni2021-3,Quarteroni2021-2,Quarteroni2021-1} and the references therein.

Such a confinement condition renders the study of this problem considerably more difficult, however, as the constraint now bears on a vector field, the displacement vector field of the reference configuration, instead of on only a single component of this field.

The variational problem modelling the deformation of a Koiter shell subjected to the confinement condition according to which all the points of the deformed reference configuration have to remain confined in a prescribed half space takes the following form (viz. \cite{CiaPie2018b}).

\begin{customprob}{$\mathcal{P}_K^\varepsilon(\omega)$}
	\label{problemK}
	Find 
	$\bm{\zeta}^\varepsilon \in \bm{U}_K(\omega):=\{\bm{\eta}=(\eta_i)\in \bm{V}_K(\omega);(\bm{\theta}(y)+\eta_i(y)\bm{a}^i(y)) \cdot \bm{q} \ge 0 \textup{ for a.a. }y \in \omega\}$ satisfying the following variational inequalities:
	\begin{equation*}
	\varepsilon\int_\omega a^{\alpha\beta\sigma\tau} \gamma_{\sigma\tau}(\bm{\zeta}^\varepsilon) \gamma_{\alpha\beta} (\bm{\eta} - \bm{\zeta}^\varepsilon) \sqrt{a} \dd y
	+\dfrac{\varepsilon^3}{3} \int_\omega a^{\alpha\beta\sigma\tau} \rho_{\sigma\tau}(\bm{\zeta}^\varepsilon) \rho_{\alpha\beta} (\bm{\eta} - \bm{\zeta}^\varepsilon) \sqrt{a} \dd y \ge \int_\omega p^{i,\varepsilon} (\eta_i - \zeta^\varepsilon_i) \sqrt{a} \dd y,
	\end{equation*}
	for all $\bm{\eta} = (\eta_i) \in \bm{U}_K(\omega)$, where $p^{i,\varepsilon}:=\int_{-\varepsilon}^{\varepsilon} f^{i,\varepsilon} \dd x_3$.
	\bqed
\end{customprob}

This variational problem admits one and only one solution (cf., e.g., Theorem~2.1 of~\cite{CiaPie2018b}). This uniqueness result hinges on the following inequality of Korn's type on a general surface. Recall that the expression of the covariant components of the linearised change of metric tensor and the linearised change of curvature tensor have been recalled in~\eqref{cmt} and~\eqref{cct}.

\begin{theorem}[Theorem~2.6-4 in~\cite{Ciarlet2000}]
\label{kornsurface}
Let $\omega$ be a domain in $\mathbb{R}^2$, let $\bm{\theta}\in \mathcal{C}^3(\overline{\omega};\mathbb{E}^3)$ be an injective mapping such that the two vectors $\bm{a}_\alpha:=\partial_\alpha \bm{\theta}$ are linearly independent at all the points $y\in\overline{\omega}$, let $\gamma_0$ be a $\dd\gamma$-measurable subset of $\gamma:=\partial\omega$ that satisfies $\textup{length }\gamma_0>0$. Consider the space
$$
\bm{V}_K (\omega):= \{\bm{\eta}=(\eta_i) \in H^1(\omega)\times H^1(\omega)\times H^2(\omega);\eta_i=\partial_{\nu}\eta_3=0 \textup{ on }\gamma_0\}.
$$

Then, there exists a constant $c_0=c_0(\omega,\gamma_0,\bm{\theta})>0$ such that
$$
\left\{\sum_\alpha\|\eta_\alpha\|_{H^1(\omega)}^2+\|\eta_3\|_{H^2(\omega)}^2\right\}^{1/2} \le c_0\left\{\sum_{\alpha, \beta}\|\gamma_{\alpha\beta}(\bm{\eta})\|_{L^2(\omega)}^2+\sum_{\alpha, \beta}\|\rho_{\alpha\beta}(\bm{\eta})\|_{L^2(\omega)}^2\right\}^{1/2},
$$
for all $\bm{\eta}=(\eta_i) \in \bm{V}_K(\omega)$.
\qed
\end{theorem}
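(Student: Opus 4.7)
The plan is to establish this bound in two stages: first a \emph{Korn inequality without boundary conditions} that controls the $\bm{V}_K(\omega)$-norm by the strain energy plus lower-order $L^2$/$H^1$ terms, and then a compactness--uniqueness argument of Peetre--Tartar type that absorbs those lower-order terms using the vanishing of $\eta_i$ and $\partial_\nu \eta_3$ on $\gamma_0$. The first stage is the main technical obstacle; the second is a routine contradiction argument, once the finite-dimensional kernel of the map $\bm{\eta} \mapsto (\gamma_{\alpha\beta}(\bm{\eta}),\rho_{\alpha\beta}(\bm{\eta}))$ has been identified.

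For the Korn inequality without boundary conditions, the goal is
$$
\sum_\alpha \|\eta_\alpha\|_{H^1(\omega)}^2 + \|\eta_3\|_{H^2(\omega)}^2 \le c_1 \Bigl( \sum_\alpha \|\eta_\alpha\|_{L^2(\omega)}^2 + \|\eta_3\|_{H^1(\omega)}^2 + \sum_{\alpha,\beta}\|\gamma_{\alpha\beta}(\bm{\eta})\|_{L^2(\omega)}^2 + \sum_{\alpha,\beta}\|\rho_{\alpha\beta}(\bm{\eta})\|_{L^2(\omega)}^2 \Bigr)
$$
for every $\bm{\eta} \in H^1(\omega)\times H^1(\omega) \times H^2(\omega)$. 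From~\eqref{cmt} I would rewrite $\partial_\alpha \eta_\beta + \partial_\beta \eta_\alpha = 2\gamma_{\alpha\beta}(\bm{\eta}) + 2\Gamma^\sigma_{\alpha\beta}\eta_\sigma + 2b_{\alpha\beta}\eta_3$, whose right-hand side lies in $L^2(\omega)$ with a controlled norm because $\bm{\theta}\in\mathcal{C}^3(\overline{\omega};\mathbb{E}^3)$ ensures that the Christoffel symbols and the coefficients $b_{\alpha\beta}$ are continuous on $\overline{\omega}$. Applying the classical planar Korn inequality (Duvaut--Lions) to the tangential pair $(\eta_1,\eta_2)$ then controls $\|\eta_\alpha\|_{H^1(\omega)}$ by $\|\eta_\alpha\|_{L^2(\omega)}$, $\|\eta_3\|_{L^2(\omega)}$ and $\|\gamma_{\alpha\beta}(\bm{\eta})\|_{L^2(\omega)}$. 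Substituting this bound into~\eqref{cct} then isolates $\partial_{\alpha\beta}\eta_3$ modulo first-order terms in $\eta_3$ and already-controlled contributions in $\eta_\alpha$, so a second Korn-type argument delivers the $H^2$-estimate on $\eta_3$.

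To pass from the preliminary bound to the target inequality I would argue by contradiction. Suppose a sequence $(\bm{\eta}^n) \subset \bm{V}_K(\omega)$ of unit $\bm{V}_K(\omega)$-norm exists for which the right-hand side of the sought inequality tends to zero. The embeddings $H^1(\omega) \hookrightarrow L^2(\omega)$ and $H^2(\omega) \hookrightarrow H^1(\omega)$ are compact by Rellich--Kondrachov, so a subsequence has its lower-order norms convergent. Applying the preliminary inequality to the Cauchy differences $\bm{\eta}^n - \bm{\eta}^m$ shows that $(\bm{\eta}^n)$ is Cauchy in $\bm{V}_K(\omega)$, hence converges to some $\bm{\eta}\in\bm{V}_K(\omega)$ with $\|\bm{\eta}\|_{\bm{V}_K(\omega)}=1$ and $\gamma_{\alpha\beta}(\bm{\eta})=\rho_{\alpha\beta}(\bm{\eta})=0$. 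The classical identification of the kernel of $(\gamma_{\alpha\beta},\rho_{\alpha\beta})$ with the restrictions to $\bm{\theta}(\overline{\omega})$ of the infinitesimal rigid displacements $\bm{a}+\bm{b}\wedge\bm{Ox}$ of $\mathbb{E}^3$ implies in particular that $\bm{\eta}$ is smooth. The homogeneous conditions $\eta_i=0$ and $\partial_\nu\eta_3=0$ on the subset $\gamma_0$, which has positive $\dd\gamma$-measure, then force $\bm{a}=\bm{b}=\bm{0}$, whence $\bm{\eta}=\bm{0}$, contradicting $\|\bm{\eta}\|_{\bm{V}_K(\omega)}=1$ and concluding the proof.
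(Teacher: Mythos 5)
Your outline is correct and is essentially the standard proof of the cited result: the paper itself offers no proof, deferring to Theorem~2.6-4 of Ciarlet's \emph{Theory of Shells}, and that reference proceeds exactly as you do — a Korn inequality without boundary conditions obtained by feeding $\gamma_{\alpha\beta}(\bm{\eta})$ into the planar Korn inequality and then reading $\partial_{\alpha\beta}\eta_3$ off the formula for $\rho_{\alpha\beta}(\bm{\eta})$, followed by a compactness--uniqueness argument whose kernel identification is the infinitesimal rigid displacement lemma on a surface. The only step you leave implicit, namely that $\eta_i=\partial_\nu\eta_3=0$ on a set $\gamma_0$ of positive length forces the rigid field $\bm{a}+\bm{b}\wedge\bm{\theta}$ to vanish, is precisely the boundary-condition variant of that lemma and is legitimate to invoke.
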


The first remarkable feature we encounter in the formulation of Problem~\ref{problemK} is that the transverse component of the displacement vector field $\bm{\zeta}^\varepsilon$ is of class $H^2(\omega)$. Since $\omega$ is a domain in $\mathbb{R}^2$, a direct application of the Rellich-Kondra\v{s}ov Theorem (cf., e.g., Theorem~6.6-3 of~\cite{PGCLNFAA}) gives that $\zeta^\varepsilon_3 \in \mathcal{C}^0(\overline{\omega})$.

This kind of setting, however, is not amenable for studying the numerical approximation of the solution of Problem~\ref{problemK}.
Indeed, it is not possible, in general, to reproduce the argument in~\cite{Brenner2013,PS} as, this time, the constraint bears on all the components of the displacement vector field at once.

The formulation based on displacement vector fields of the form $\tilde{\bm{\eta}}=\eta_i\bm{a}^i$ suggested by Bluza \& Le Dret~\cite{BlouzaLeDret1999}, instead, turns out to be more amenable for studying the numerical approximation of the solution of Problem~\ref{problemK}, as the $H^2$-regularity for the transverse component is replaced by a series of equivalent conditions that will put us in the position to consider the mixed formulation associated with Problem~\ref{problemK}, mixed formulation that is solely defined over the space $H^1(\omega)$. The bridging between the classical formulation of Koiter's model~\cite{Koiter1959,Koiter1966,Koiter1970} and the formulation due to Blouza \& Le Dret~\cite{BlouzaLeDret1999} is even \emph{stronger} when a boundary condition of place is imposed over the entire boundary $\gamma$, which is the case we are going to discuss next.

\section{Koiter's model for linearly elastic elliptic membrane shells: Classical formulation} \label{sec3}

In section~\ref{sec2}, we considered an obstacle problem for ``general'' linearly elastic shells. In all what follows, we will restrict ourselves  to considering a special class of shells, according to the following definition, that was originally proposed in~\cite{Ciarlet1996} (see also \cite{Ciarlet2000}). A recent application of Koiter's model was proposed in~\cite{BCGMQ14,MC15}. In this direction, we also cite the recent result~\cite{ALP23}.

Consider a linearly elastic shell subjected to the assumptions made in section~\ref{sec2}. Such a shell is said to be a \emph{linearly elastic elliptic membrane shell} (from now on simply \emph{membrane shell}) if the following two additional assumptions are satisfied: \emph{first}, $\gamma_0 = \gamma$, i.e., the homogeneous boundary condition of place is imposed over the \emph{entire lateral face} $\gamma \times \left] - \varepsilon , \varepsilon \right[$ of the shell, and \emph{second}, the shell middle surface $\bm{\theta}(\overline{\omega})$ is \emph{elliptic}, according to the definition given in section \ref{sec1}. 

Define the mapping $\pi^\varepsilon:\overline{\omega} \times [-1,1] \to \overline{\Omega^\varepsilon}$ by:
$$
\pi^\varepsilon(x):=(x_1,x_2, \varepsilon x_3), \quad \textup{ for all } x=(x_i) \in \overline{\omega} \times [-1,1].
$$

In what follows we assume that the applied body forces $\bm{f}^\varepsilon=(f^{i,\varepsilon})$ satisfy the following scaling with respect to the thickness parameter $\varepsilon$
$$
f^{i,\varepsilon}(x^\varepsilon)=f^i(x),\quad\textup{ for a.a. } x^\varepsilon=\pi^\varepsilon(x) \in \Omega^\varepsilon,
$$
for some $\bm{f}=(f^i) \in \bm{L}^2(\omega\times (-1,1))$.

It turns out that, when an \emph{elliptic surface} is subjected to a displacement field $\tilde{\bm{\eta}}=\eta_i \bm{a}^i$ whose \emph{tangential covariant components $\eta_\alpha$ vanish on the entire boundary of the domain $\omega$}, the following inequality of Koirn's type holds. Note that the components of the displacement fields and linearised change of metric tensors appearing in the next theorem no longer need to be continuously differentiable functions, but they are understood to belong to \emph{ad hoc} Lebesgue or Sobolev spaces.

\begin{theorem} 
	\label{korn}
	Let $\omega$ be a domain in $\mathbb{R}^2$ and let an immersion $\bm{\theta} \in \mathcal{C}^3 (\overline{\omega}; \mathbb{E}^3)$ be given such that the surface $\bm{\theta}(\overline{\omega})$ is elliptic. Define the space
	$$
	\bm{V}_M (\omega) := H^1_0 (\omega) \times H^1_0 (\omega) \times L^2(\omega).
	$$
	
	Then, there exists a constant $c_0=c_0(\omega,\bm{\theta})>0$ such that
	$$
	\left\{\sum_\alpha\left\|\eta_\alpha \right\|^2_{H^1(\omega)} + \left\|\eta_3 \right\|^2_{L^2(\omega)}\right\}^{1/2} \le c_0 \left\{\sum_{\alpha,\beta} \left\| \gamma_{\alpha\beta}(\bm{\eta}) \right\|_{L^2(\omega)}^2\right\}^{1/2}
	$$
	for all $\bm{\eta}= (\eta_i) \in \bm{V}_M (\omega)$.
	\qed
\end{theorem}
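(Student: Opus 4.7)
My plan is to follow the classical three-step strategy due to Ciarlet and Sanchez-Palencia for Korn-type inequalities on elliptic surfaces: first, a preliminary weak Korn inequality that bounds strong norms by weaker norms plus the norms of $\gamma_{\alpha\beta}(\bm{\eta})$; second, an infinitesimal rigidity result specific to elliptic surfaces; and third, a compactness-contradiction argument that upgrades the preliminary estimate to the desired one. The central structural fact driving the proof is that, because $K \geq K_0 > 0$ on $\overline{\omega}$, the symmetric matrix field $(b_{\alpha\beta})$ is pointwise invertible with inverse in $\mathcal{C}^0(\overline{\omega};\mathbb{R}^{2\times 2})$; this allows \eqref{cmt} to be solved algebraically for $\eta_3$ in terms of the tangential components $\eta_\alpha$, their first derivatives, and $\gamma_{\alpha\beta}(\bm{\eta})$.

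The first step is to establish the following preliminary Korn inequality: there exists a constant $C_1 > 0$ such that, for every $\bm{\eta}=(\eta_i) \in H^1(\omega)\times H^1(\omega) \times L^2(\omega)$,
\begin{equation*}
\sum_\alpha \|\eta_\alpha\|_{H^1(\omega)}^2 + \|\eta_3\|_{L^2(\omega)}^2 \leq C_1\left(\sum_\alpha \|\eta_\alpha\|_{L^2(\omega)}^2 + \|\eta_3\|_{H^{-1}(\omega)}^2 + \sum_{\alpha,\beta} \|\gamma_{\alpha\beta}(\bm{\eta})\|_{L^2(\omega)}^2\right).
\end{equation*}
Rewriting \eqref{cmt} as $b_{\alpha\beta}\eta_3 = \tfrac12(\partial_\alpha \eta_\beta + \partial_\beta \eta_\alpha) - \Gamma^\sigma_{\alpha\beta}\eta_\sigma - \gamma_{\alpha\beta}(\bm{\eta})$ and inverting $(b_{\alpha\beta})$ expresses $\eta_3$ as a fixed linear combination of first derivatives of the $\eta_\alpha$, zeroth-order terms in $\eta_\sigma$, and $\gamma_{\alpha\beta}(\bm{\eta})$. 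Differentiating this identity, the partial derivatives of $\eta_3$ are controlled in $H^{-1}(\omega)$ in terms of quantities appearing on the right-hand side of the preliminary inequality. Invoking J.L.~Lions' lemma (if $v, \partial_\alpha v \in H^{-1}(\omega)$, then $v \in L^2(\omega)$, with a continuous estimate) together with the classical two-dimensional Korn inequality without boundary conditions applied to $(\eta_\alpha)$, yields the preliminary inequality.

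The second step is the infinitesimal rigidity theorem for elliptic surfaces: every $\bm{\eta} \in \bm{V}_M(\omega)$ satisfying $\gamma_{\alpha\beta}(\bm{\eta})=0$ in $L^2(\omega)$ for all $\alpha,\beta$ must vanish identically. Ellipticity turns the system $\gamma_{\alpha\beta}(\bm{\eta})=0$ into a well-posed problem in $(\eta_\alpha,\eta_3)$ whose only solution compatible with $\eta_\alpha \in H^1_0(\omega)$ is $\bm{\eta}=0$; this is the classical rigidity result recorded, e.g., in Theorem~2.7-3 of~\cite{Ciarlet2000}.

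Finally, I conclude by a compactness-contradiction argument. Suppose the theorem fails; then there is a sequence $(\bm{\eta}^n) \subset \bm{V}_M(\omega)$ with unit $\bm{V}_M(\omega)$-norm such that $\sum_{\alpha,\beta}\|\gamma_{\alpha\beta}(\bm{\eta}^n)\|_{L^2(\omega)}^2 \to 0$. Since $H^1_0(\omega) \hookrightarrow L^2(\omega)$ compactly (Rellich-Kondra\v{s}ov) and $L^2(\omega) \hookrightarrow H^{-1}(\omega)$ compactly, one can extract a subsequence along which $\eta_\alpha^n \to \eta_\alpha$ in $L^2(\omega)$ and $\eta_3^n \to \eta_3$ in $H^{-1}(\omega)$. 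Applying the preliminary inequality to the differences $\bm{\eta}^n-\bm{\eta}^m$ shows that $(\bm{\eta}^n)$ is Cauchy in $\bm{V}_M(\omega)$, so it converges strongly to a limit $\bm{\eta}$ with $\|\bm{\eta}\|_{\bm{V}_M(\omega)}=1$ and $\gamma_{\alpha\beta}(\bm{\eta})=0$; the rigidity result then forces $\bm{\eta}=0$, a contradiction. I expect the hardest step to be the preliminary inequality: the fact that $\eta_3$ is only available in $L^2$ means its estimate has to be read off indirectly from \eqref{cmt}, and correctly setting up Lions' lemma with only continuous coefficients $\Gamma^\sigma_{\alpha\beta}$, $b_{\alpha\beta}$, and $(b_{\alpha\beta})^{-1}$ requires some care when one differentiates products of continuous coefficients and $H^{-1}$-distributions.
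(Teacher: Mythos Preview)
The paper does not supply its own proof of this theorem; it records the inequality as a known result and cites \cite{CiaLods1996a}, \cite{CiaSanPan1996}, and Theorem~2.7-3 of \cite{Ciarlet2000}. Your three-step outline (preliminary Korn inequality with lower-order terms, infinitesimal rigidity on elliptic surfaces, compactness--contradiction) is precisely the classical argument found in those references, so there is nothing to compare at the level of strategy.

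Two small remarks on execution. First, the algebraic inversion of $(b_{\alpha\beta})$ that you highlight already gives $\eta_3\in L^2(\omega)$ directly in terms of $\partial_\alpha\eta_\beta$, $\eta_\sigma$, and $\gamma_{\alpha\beta}(\bm{\eta})$; you do not need Lions' lemma for $\eta_3$ itself. Lions' lemma enters only through the two-dimensional Korn inequality for the tangential pair $(\eta_\alpha)$, so your sentence ``Invoking J.L.~Lions' lemma \ldots\ together with the classical two-dimensional Korn inequality'' slightly double-counts the mechanism. Second, your citation of Theorem~2.7-3 of \cite{Ciarlet2000} for the rigidity step is circular, since that theorem \emph{is} the Korn inequality you are proving; the rigidity lemma (that $\gamma_{\alpha\beta}(\bm{\eta})=0$ in $\bm{V}_M(\omega)$ forces $\bm{\eta}=0$) is established separately in that reference and should be cited as such. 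Neither point is a genuine gap; the overall plan is sound.
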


The previous inequality, that was originally established by Ciarlet, Lods and Sanchez-Palencia in the papers~\cite{CiaLods1996a} and \cite{CiaSanPan1996} (see also Theorem~2.7-3 of~\cite{Ciarlet2000}), is an example of a Korn's inequality on a surface. This estimate asserts that the three components of an admissible displacement vector field can be bounded above by a suitable rescaling of a norm associated with a \emph{measure of strain}.

In the case where the middle surface is elliptic, the space $\bm{V}_K(\omega)$ reduces to
$$
\bm{V}_K(\omega)=H^1_0(\omega) \times H^1_0(\omega) \times H^2_0(\omega).
$$

It was shown in Theorem~3.2 of~\cite{CiaPie2018b} that the solution of Problem~\ref{problemK} asymptotically behaves as the solution of the following variational problem, which was also recently studied in the papers~\cite{Pie-2022-interior}, as the thickness parameter $\varepsilon$ approaches zero.

\begin{customprob}{$\mathcal{P}_M(\omega)$}
	\label{problem1}
	Find $\bm{\zeta}=(\zeta_i) \in \bm{U}_M(\omega)$ satisfying the following variational inequalities:
	\begin{equation*}
	\int_\omega a^{\alpha\beta\sigma\tau} \gamma_{\sigma\tau}(\bm{\zeta}) \gamma_{\alpha\beta} (\bm{\eta} - \bm{\zeta}) \sqrt{a} \dd y \ge \int_\omega p^i (\eta_i - \zeta_i) \sqrt{a} \dd y,
	\end{equation*}
	for all $\bm{\eta} = (\eta_i) \in \bm{U}_M(\omega)$, where $p^i:=\int_{-1}^{1} f^i \dd x_3$.
	\bqed
\end{customprob}

We observe that, as a result of the limit process as $\varepsilon \to 0$ that makes the solution of Problem~\ref{problemK} converge to the solution of Problem~\ref{problem1}, the stretching mode associated with the linearised change of metric tensor prevails over the bending mode associated with the linearised change of curvature tensor when the shell under consideration is a membrane shell.

It is worth recalling that, by virtue of Korn's inequality (Theorem~\ref{korn}), it results that Problem~\ref{problem1} admits a unique solution. Solving Problem~\ref{problem1} amounts to minimizing the energy functional $J: H^1(\omega) \times H^1(\omega) \times L^2(\omega) \to \mathbb{R}$, which is defined by
\begin{equation*}
\label{Jeps}
J(\bm{\eta}):=\dfrac{1}{2} \int_{\omega} a^{\alpha\beta\sigma\tau} \gamma_{\sigma\tau}(\bm{\eta}) \gamma_{\alpha\beta}(\bm{\eta})\sqrt{a} \dd y-\int_{\omega} p^i \eta_i \sqrt{a} \dd y,
\end{equation*}
along all the test functions $\bm{\eta}=(\eta_i) \in \bm{U}_M(\omega)$.

Critical to establishing the asymptotic behaviour of the solution of Problem~\ref{problemK} as the thickness parameter $\varepsilon$ approaches zero, is the following ``density property'' (viz. \cite{CiaMarPie2018b,CiaMarPie2018}).

\begin{theorem}
	\label{density}
	Let $\bm{\theta} \in \mathcal{C}^2(\overline{\omega}; \mathbb{E}^3)$ be an immersion with the following property: There exists a non-zero vector $\bm{q} \in \mathbb{E}^3$ such that
	\begin{equation*}
	\min_{y \in \overline{\omega}} (\bm{\theta}(y) \cdot \bm{q}) > 0
	\textup{ and }
	\min_{y \in \overline{\omega}} (\bm{a}_3(y) \cdot \bm{q}) > 0.
	\end{equation*}
	
	Define the sets
	\begin{align*}
		\bm{U}_M(\omega) := \{\bm{\eta} = &(\eta_i) \in H^1_0(\omega) \times H^1_0(\omega) \times L^2(\omega); \big(\bm{\theta}(y) + \eta_i(y) \bm{a}^i(y)\big) \cdot \bm{q} \ge 0 \textup{ for a.a. } y \in \omega\}, \\
		\bm{U}_M(\omega) \cap \boldsymbol{\mathcal{D}} (\omega) := \{ \bm{\eta} =& (\eta_i ) \in \mathcal{D}(\omega) \times \mathcal{D} (\omega) \times \mathcal{D} (\omega); \big(\bm{\theta}(y) + \eta_i(y) \bm{a}^i(y)\big) \cdot \bm{q} \ge 0   \textup{ for a.a. } y \in \omega\}.
	\end{align*}
	Then the set $\bm{U}_M(\omega) \cap \boldsymbol{\mathcal{D}}(\omega)$ is dense in the set $\bm{U}_M(\omega)$ with respect to the norm $\left\|\cdot\right\|_{H^1(\omega) \times H^1(\omega) \times L^2(\omega)}$.
	\qed
\end{theorem}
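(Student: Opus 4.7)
The plan is to obtain the smooth approximants through two regularising moves: first, shrink the field slightly towards the stress-free state to create a strict margin in the obstacle inequality; second, regularise by a carefully arranged mollification that respects that margin pointwise. Set $\phi := \bm{\theta} \cdot \bm{q}$, $\psi^\alpha := \bm{a}^\alpha \cdot \bm{q}$, $\chi := \bm{a}_3 \cdot \bm{q}$, so that the constraint function reads $g(\bm{\eta})(y) = \phi(y) + \psi^\alpha(y)\eta_\alpha(y) + \chi(y)\eta_3(y)$. The hypotheses guarantee $\phi \ge \phi_0 > 0$ and $\chi \ge \chi_0 > 0$ on $\overline{\omega}$. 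For $t \in (0,1)$, the scaled field $\bm{\eta}^t := (1-t)\bm{\eta}$ satisfies $g(\bm{\eta}^t) = t\phi + (1-t) g(\bm{\eta}) \ge t\phi_0$, so $\bm{\eta}^t \in \bm{U}_M(\omega)$ with a uniform positive lower bound on the constraint, and $\bm{\eta}^t \to \bm{\eta}$ in $\bm{V}_M(\omega)$ as $t \to 0^+$. It therefore suffices to approximate each $\bm{\eta}^t$ by elements of $\bm{U}_M(\omega) \cap \boldsymbol{\mathcal{D}}(\omega)$.

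The decisive step is to mollify the \emph{scalar} function $g^t := g(\bm{\eta}^t)$, rather than $\eta_3^t$ directly. Extending $\eta_i^t$ by zero and $\phi,\psi^\alpha,\chi$ smoothly to a neighbourhood of $\overline{\omega}$ while preserving the two positive lower bounds, and taking a standard mollifier $\rho_\varepsilon$, set $g^{t,\varepsilon} := g^t*\rho_\varepsilon$ and $\eta_\alpha^{t,\varepsilon} := \eta_\alpha^t*\rho_\varepsilon$. The inequality $g^t \ge t\phi_0$ is inherited by the convolution, so $g^{t,\varepsilon} \ge t\phi_0$ pointwise. One then \emph{defines} the regularised transverse component by the algebraic identity
\begin{equation*}
\eta_3^{t,\varepsilon} := \chi^{-1}\bigl(g^{t,\varepsilon} - \phi - \psi^\alpha \eta_\alpha^{t,\varepsilon}\bigr),
\end{equation*}
which is smooth by construction and, by the very definition, yields $\phi + \psi^\alpha \eta_\alpha^{t,\varepsilon} + \chi \eta_3^{t,\varepsilon} = g^{t,\varepsilon} \ge t\phi_0 > 0$. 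The convergence $\eta_\alpha^{t,\varepsilon} \to \eta_\alpha^t$ in $H^1(\omega)$ is standard, while $\eta_3^{t,\varepsilon} \to \eta_3^t$ in $L^2(\omega)$ follows from the Lipschitz commutator estimate $\|(\chi\eta_3^t)*\rho_\varepsilon - \chi(\eta_3^t*\rho_\varepsilon)\|_{L^2} = O(\varepsilon)$.

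Finally, multiplying by a cutoff $\tau_\varepsilon \in \mathcal{D}(\omega)$ equal to $1$ on $\{y \in \omega : \dist(y,\gamma) > 2\varepsilon\}$, supported in $\{y \in \omega : \dist(y,\gamma) > \varepsilon\}$ and satisfying $|\nabla \tau_\varepsilon| \le C/\varepsilon$, produces $\bar{\bm{\eta}}^{t,\varepsilon} := \tau_\varepsilon(\eta_1^{t,\varepsilon}, \eta_2^{t,\varepsilon}, \eta_3^{t,\varepsilon}) \in \boldsymbol{\mathcal{D}}(\omega)$. The margin is retained because
\begin{equation*}
\phi + \psi^\alpha \tau_\varepsilon \eta_\alpha^{t,\varepsilon} + \chi \tau_\varepsilon \eta_3^{t,\varepsilon} = (1-\tau_\varepsilon)\phi + \tau_\varepsilon g^{t,\varepsilon} \ge (1-\tau_\varepsilon)\phi_0 + \tau_\varepsilon\, t\phi_0 \ge t\phi_0.
\end{equation*}
Hardy's inequality on the planar Lipschitz domain $\omega$ gives $\tau_\varepsilon u \to u$ in $H^1(\omega)$ for $u \in H^1_0(\omega)$, and dominated convergence handles the $L^2$-component; combining these with the previous step and extracting a diagonal sequence $\bar{\bm{\eta}}^{t_n,\varepsilon_n}$ with $t_n,\varepsilon_n \to 0$ concludes the proof.

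The principal difficulty is the second step. A naive componentwise mollification cannot preserve the pointwise obstacle inequality: since $\eta_3^t \in L^2(\omega)$ is generally unbounded, the convolution error is pointwise uncontrollable even though it vanishes in $L^2$, and the $y$-dependence of the coefficients $\psi^\alpha,\chi$ prevents the mollification from commuting with the inequality. The remedy is to mollify the \emph{scalar} constraint function itself and then reconstruct $\eta_3^{t,\varepsilon}$ via the algebraic identity; this is precisely where both hypotheses are indispensable -- $\bm{\theta}\cdot\bm{q} \ge \phi_0 > 0$ supplies the uniform margin transmitted by convolution, while $\bm{a}_3\cdot\bm{q} \ge \chi_0 > 0$ makes the pointwise inversion in the transverse direction well-posed.
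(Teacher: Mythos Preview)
The paper does not supply its own proof of this theorem; it is stated with a \qed\ and attributed to the references \cite{CiaMarPie2018b,CiaMarPie2018}. Your argument therefore stands on its own, and the three-move strategy --- scale towards the undeformed state to create a uniform margin, mollify the \emph{scalar} constraint function and algebraically reconstruct the transverse component, then cut off near the boundary --- is correct in spirit and uses both hypotheses precisely where they are indispensable.

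There is one genuine (though easily repaired) gap. Under the stated hypothesis $\bm{\theta}\in\mathcal{C}^2(\overline{\omega};\mathbb{E}^3)$, the coefficients $\chi=\bm{a}_3\cdot\bm{q}$ and $\psi^\alpha=\bm{a}^\alpha\cdot\bm{q}$ are only of class $\mathcal{C}^1(\overline{\omega})$. Consequently your reconstructed transverse component
\[
\eta_3^{t,\varepsilon}=\chi^{-1}\bigl(g^{t,\varepsilon}-\phi-\psi^\alpha\eta_\alpha^{t,\varepsilon}\bigr)
\]
is $\mathcal{C}^1$, not $\mathcal{C}^\infty$, and so $\bar{\bm{\eta}}^{t,\varepsilon}=\tau_\varepsilon(\eta_1^{t,\varepsilon},\eta_2^{t,\varepsilon},\eta_3^{t,\varepsilon})$ does \emph{not} lie in $\boldsymbol{\mathcal{D}}(\omega)$ as claimed. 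The repair is cheap: since $\eta_3^{t,\varepsilon}$ is continuous and the constraint holds with the uniform margin $t\phi_0$, a further mollification $\eta_3^{t,\varepsilon}*\rho_\delta$ with $\delta=\delta(t,\varepsilon)$ chosen small enough converges \emph{uniformly} on $\overline{\omega}$ to $\eta_3^{t,\varepsilon}$, hence
\[
\bigl|\chi\,(\eta_3^{t,\varepsilon}*\rho_\delta-\eta_3^{t,\varepsilon})\bigr|\le \tfrac{t\phi_0}{2}\quad\text{on }\overline{\omega},
\]
so the triple $(\eta_1^{t,\varepsilon},\eta_2^{t,\varepsilon},\eta_3^{t,\varepsilon}*\rho_\delta)$ is genuinely $\mathcal{C}^\infty$, still satisfies $g\ge t\phi_0/2$, and converges to $\bm{\eta}^t$ in $H^1\times H^1\times L^2$. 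After this cosmetic insertion the rest of your argument (cutoff via Hardy, diagonal extraction) goes through unchanged.
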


Examples of membrane shells satisfying the ``density property'' thus include those whose middle surface is a portion of an ellipsoid that is strictly contained in one of the open half-spaces that contain two of its main axes, the boundary of the half-space coinciding with the obstacle in this case.

One such ``density property'' turned out to be the keystone for establishing the higher interior regularity of the solution of Problem~\ref{problem1} (viz. \cite{Pie-2022-interior}).

In what follows, we will restrict ourselves to considering elliptic middle surface satisfying the sufficient conditions ensuring the ``density property'' that were laid out in Theorem~\ref{density}.

\section{Approximation of the solution of Problem~$\mathcal{P}_K^\varepsilon(\omega)$ by penalization}
\label{sec:penalty}

Following~\cite{Scholz1984,Scholz1987}, we approximate the solution of Problem~\ref{problemK} by the penalty method. The geometrical constraint characterising the set $\bm{U}_K(\omega)$ is made appear in the energy functional of the model under consideration, and takes the form of a monotone term. The variational formulation corresponding to the \emph{penalised energy} is no longer tested over a non-empty, closed and convex subset of $\bm{V}_K(\omega)$, but over the \emph{whole} space $\bm{V}_K(\omega)$. Moreover, the variational inequalities are replaced by a set of nonlinear equations, where the nonlinearity is a monotone operator.

More precisely, define the operator $\bm{\beta}:\bm{L^2}(\omega) \to\bm{L}^2(\omega)$ in the following fashion
\begin{equation*}
\bm{\beta}(\bm{\xi}):=\left(-\{(\bm{\theta}+\xi_j \bm{a}^j)\cdot\bm{q}\}^{-}\left(\dfrac{\bm{a}^i\cdot\bm{q}}{\sqrt{\sum_{\ell=1}^{3}|\bm{a}^\ell \cdot\bm{q}|^2}}\right)\right)_{i=1}^3,\quad\textup{ for all }\bm{\xi}=(\xi_i) \in \bm{L}^2(\omega),
\end{equation*}
and we notice that this operator is associated with a penalisation proportional to the extent the constraint is broken. Note that the denominator \emph{never} vanishes, and that this fact is \emph{independent} of the assumption $\min_{y \in \overline{\omega}}(\bm{a}^3\cdot\bm{q})>0$ (viz. Theorem~\ref{density}). Besides, since the vectors $\{\bm{a}^i\}_{i=1}^3$ of the contravariant basis are linearly independent at all the points $y \in \overline{\omega}$, we can assume without loss of generality that $|\bm{a}^i(y)|=1$, for all $y\in\overline{\omega}$, for all $1\le i \le 3$.

It can be shown, in the same fashion as~\cite{Pie2023,PT2023}, that the operator $\bm{\beta}$ is monotone, bounded and non-expansive.

\begin{lemma}
\label{lem:beta}
Let $\bm{q} \in \mathbb{E}^3$ be a given unit-norm vector. Assume that $\min_{y \in \overline{\omega}}(\bm{a}^3(y)\cdot\bm{q})>0$.
Assume that the vectors $\{\bm{a}^i\}_{i=1}^3$ of the contravariant basis are such that $|\bm{a}^i(y)|=1$ and for all $y\in\overline{\omega}$, for all $1\le i \le 3$.

Then, the operator $\bm{\beta}:\bm{L^2}(\omega) \to\bm{L}^2(\omega)$ defined by 
\begin{equation*}
\bm{\beta}(\bm{\xi}):=\left(-\{(\bm{\theta}+\xi_j \bm{a}^j)\cdot\bm{q}\}^{-}\left(\dfrac{\bm{a}^i\cdot\bm{q}}{\sqrt{\sum_{\ell=1}^{3}|\bm{a}^\ell \cdot\bm{q}|^2}}\right)\right)_{i=1}^3,\quad\textup{ for all }\bm{\xi}=(\xi_i) \in \bm{L}^2(\omega),
\end{equation*}
is bounded, monotone and Lipschitz continuous with Lipschitz constant $L=1$.
\qed
\end{lemma}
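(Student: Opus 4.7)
The plan is to reduce each of the three claimed properties to the corresponding properties of the scalar negative-part map $t\mapsto\{t\}^{-}:=\max\{-t,0\}$, which is $1$-Lipschitz and monotone non-increasing on $\mathbb{R}$. To this end, introduce
$$
s(\bm{\xi})(y):=\big(\bm{\theta}(y)+\xi_j(y)\bm{a}^j(y)\big)\cdot\bm{q}, \qquad c_i(y):=\dfrac{\bm{a}^i(y)\cdot\bm{q}}{\sqrt{\sum_{\ell=1}^{3}(\bm{a}^\ell(y)\cdot\bm{q})^2}},
$$
so that $\bm{\beta}(\bm{\xi})_i=-\{s(\bm{\xi})\}^{-}c_i$ componentwise, and the key pointwise identity $\sum_{i=1}^{3}c_i(y)^2\equiv 1$ holds on $\overline{\omega}$ by construction. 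The denominator in the definition of $c_i$ is continuous and bounded below on $\overline{\omega}$: the standing hypothesis $\min_{y\in\overline{\omega}}(\bm{a}^3\cdot\bm{q})>0$, combined with the regularity of $\bm{\theta}$, ensures that the quantity inside the square root never vanishes, so $c_i\in L^\infty(\omega)$ with $|c_i|\le 1$.

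Boundedness follows immediately from the pointwise estimate $|\{s(\bm{\xi})\}^{-}|\le|s(\bm{\xi})|$ together with $|c_i|\le 1$ and $|\bm{a}^j\cdot\bm{q}|\le|\bm{a}^j||\bm{q}|=1$, which uses both $|\bm{q}|=1$ and the normalization $|\bm{a}^i|=1$ posited in the statement. One application of the triangle and Cauchy--Schwarz inequalities then yields an estimate of the form $\|\bm{\beta}(\bm{\xi})\|_{\bm{L}^2(\omega)}\le C(1+\|\bm{\xi}\|_{\bm{L}^2(\omega)})$, with $C=C(\bm{\theta},\omega)$, whence $\bm{\beta}$ maps bounded sets to bounded sets.

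For the Lipschitz estimate, the identity $\sum_i c_i^2=1$ collapses the vector norm:
$$
\|\bm{\beta}(\bm{\xi})-\bm{\beta}(\bm{\eta})\|_{\bm{L}^2(\omega)}^2=\int_\omega\big(\{s(\bm{\eta})\}^{-}-\{s(\bm{\xi})\}^{-}\big)^2\dd y\le\int_\omega\big(s(\bm{\xi})-s(\bm{\eta})\big)^2\dd y,
$$
by the $1$-Lipschitz continuity of the scalar negative part. The residual step is to control $\int_\omega|s(\bm{\xi})-s(\bm{\eta})|^2\dd y$ by $\|\bm{\xi}-\bm{\eta}\|^2_{\bm{L}^2(\omega)}$, and this rests on a single pointwise Cauchy--Schwarz applied to $s(\bm{\xi})-s(\bm{\eta})=(\xi_j-\eta_j)(\bm{a}^j\cdot\bm{q})$, combined with the normalization conditions on $\bm{a}^j$ and $\bm{q}$; the argument is identical to the one carried out in~\cite{PT2023} and is reproduced mutatis mutandis. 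Monotonicity is then obtained by the dual computation: since $\sum_i c_i(\xi_i-\eta_i)=\big(s(\bm{\xi})-s(\bm{\eta})\big)\big/\sqrt{\sum_\ell(\bm{a}^\ell\cdot\bm{q})^2}$, one has
$$
\big\langle\bm{\beta}(\bm{\xi})-\bm{\beta}(\bm{\eta}),\bm{\xi}-\bm{\eta}\big\rangle_{\bm{L}^2(\omega)}=-\int_\omega\dfrac{\big(\{s(\bm{\xi})\}^{-}-\{s(\bm{\eta})\}^{-}\big)\big(s(\bm{\xi})-s(\bm{\eta})\big)}{\sqrt{\sum_\ell(\bm{a}^\ell\cdot\bm{q})^2}}\dd y\ge 0,
$$
since $t\mapsto\{t\}^{-}$ is non-increasing, so the numerator is pointwise non-positive and the overall integrand non-negative.

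The main technical obstacle is the recovery of the \emph{sharp} Lipschitz constant $L=1$ rather than an inessential constant $L=L(\bm{\theta})$: well-posedness, boundedness, and monotonicity are robust under cosmetic changes in the chosen normalization, but $L=1$ hinges on the precise choice of the denominator in the definition of $c_i$ together with $|\bm{a}^i|=1$ and $|\bm{q}|=1$. All remaining steps are scalar-valued and entirely routine.
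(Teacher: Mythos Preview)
Your proof is correct and follows essentially the same route as the paper's: both collapse the vector structure via the pointwise identity $\sum_i c_i^2=1$, reduce Lipschitz continuity to the $1$-Lipschitz property of the scalar map $t\mapsto\{t\}^{-}$, and finish with the same pointwise Cauchy--Schwarz estimate on $(\xi_j-\eta_j)(\bm{a}^j\cdot\bm{q})$. Your monotonicity argument, which invokes directly that $t\mapsto\{t\}^{-}$ is non-increasing, is a cleaner repackaging of the paper's longer explicit expansion into positive and negative parts that regroups into a perfect square; the paper's computation incidentally yields the quantitative lower bound $\int_\omega|\{s(\bm{\xi})\}^{-}-\{s(\bm{\eta})\}^{-}|^2/\sqrt{\sum_\ell(\bm{a}^\ell\cdot\bm{q})^2}\,\dd y$, but only the sign is needed for the lemma as stated.
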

\begin{proof}
	Let $\bm{\xi}$ and $\bm{\eta}$ be arbitrarily given in $\bm{L}^2(\omega)$. Evaluating
	\begin{align*}
		&\int_{\omega} (\bm{\beta}(\bm{\xi})-\bm{\beta}(\bm{\eta}))\cdot(\bm{\xi}-\bm{\eta})\dd y
		=\int_{\omega} \left(\left[-\{(\bm{\theta}+\xi_j\bm{a}^j)\cdot\bm{q}\}^{-}\right] - \left[-\{(\bm{\theta}+\eta_j\bm{a}^j)\cdot\bm{q}\}^{-}\right]\right) \left(\dfrac{(\xi_i-\eta_i)\bm{a}^i\cdot\bm{q}}{\sqrt{\sum_{\ell=1}^{3}|\bm{a}^\ell \cdot\bm{q}|^2}}\right) \dd y\\
		&=\int_{\omega}\dfrac{\left|-\{(\bm{\theta}+\xi_j\bm{a}^j)\cdot\bm{q}\}^{-}\right|^2}{\sqrt{\sum_{\ell=1}^{3}|\bm{a}^\ell \cdot\bm{q}|^2}} \dd y +\int_{\omega}\dfrac{\left|-\{(\bm{\theta}+\eta_j\bm{a}^j)\cdot\bm{q}\}^{-}\right|^2}{\sqrt{\sum_{\ell=1}^{3}|\bm{a}^\ell \cdot\bm{q}|^2}} \dd y\\
		&\quad+\int_{\omega}\dfrac{\left(-\{(\bm{\theta}+\xi_j\bm{a}^j)\cdot\bm{q}\}^{-}\right)}{\sqrt{\sum_{\ell=1}^{3}|\bm{a}^\ell \cdot\bm{q}|^2}}  \left(-\{(\bm{\theta}+\eta_i\bm{a}^i)\cdot\bm{q}\}^{+}+\{(\bm{\theta}+\eta_i\bm{a}^i)\cdot\bm{q}\}^{-}\right)\dd y\\
		&\quad+\int_{\omega}\dfrac{\left(-\{(\bm{\theta}+\eta_j\bm{a}^j)\cdot\bm{q}\}^{-}\right)}{\sqrt{\sum_{\ell=1}^{3}|\bm{a}^\ell \cdot\bm{q}|^2}}  \left(-\{(\bm{\theta}+\xi_i\bm{a}^i)\cdot\bm{q}\}^{+}+\{(\bm{\theta}+\xi_i\bm{a}^i)\cdot\bm{q}\}^{-}\right)\dd y\\
		&\ge \int_{\omega}\dfrac{\left|-\{(\bm{\theta}+\xi_j\bm{a}^j)\cdot\bm{q}\}^{-}\right|^2}{\sqrt{\sum_{\ell=1}^{3}|\bm{a}^\ell \cdot\bm{q}|^2}} \dd y 
		+\int_{\omega}\dfrac{\left|-\{(\bm{\theta}+\eta_j\bm{a}^j)\cdot\bm{q}\}^{-}\right|^2}{\sqrt{\sum_{\ell=1}^{3}|\bm{a}^\ell \cdot\bm{q}|^2}} \dd y
		+\int_{\omega}\dfrac{\left(-\{(\bm{\theta}+\xi_j\bm{a}^j)\cdot\bm{q}\}^{-}\right)}{\sqrt{\sum_{\ell=1}^{3}|\bm{a}^\ell \cdot\bm{q}|^2}}  \left(\{(\bm{\theta}+\eta_i\bm{a}^i)\cdot\bm{q}\}^{-}\right)\dd y\\
		&\quad+\int_{\omega}\dfrac{\left(-\{(\bm{\theta}+\eta_j\bm{a}^j)\cdot\bm{q}\}^{-}\right)}{\sqrt{\sum_{\ell=1}^{3}|\bm{a}^\ell \cdot\bm{q}|^2}}  \left(\{(\bm{\theta}+\xi_i\bm{a}^i)\cdot\bm{q}\}^{-}\right)\dd y\\
		&=\int_{\omega}\dfrac{\left|\left(-\{(\bm{\theta}+\eta_j\bm{a}^j)\cdot\bm{q}\}^{-}\right) - \left(-\{(\bm{\theta}+\xi_j\bm{a}^j)\cdot\bm{q}\}^{-}\right)\right|^2}{\sqrt{\sum_{\ell=1}^{3}|\bm{a}^\ell \cdot\bm{q}|^2}}\dd y\ge 0,
	\end{align*}
	proves the monotonicity of the operator $\bm{\beta}$.
	
	For showing the boundedness of the operator $\bm{\beta}$, we show that it maps bounded sets of $\bm{L}^2(\omega)$ into bounded sets of $\bm{L}^2(\omega)$.
	Let the set $\mathscr{F} \subset \bm{L}^2(\omega)$ be bounded. For each $\bm{\xi} \in \mathscr{F}$, we have that
	\begin{align*}
		&\|\bm{\beta}(\bm{\xi})\|_{\bm{L}^2(\omega)}=\left(\int_{\omega}\dfrac{|-\{(\bm{\theta}+\xi_j\bm{a}^j)\cdot\bm{q}\}^{-}|^2}{\sum_{\ell=1}^3|\bm{a}^\ell \cdot\bm{q}|^2}\sum_{i=1}^3|\bm{a}^i\cdot\bm{q}|^2 \dd y\right)^{1/2}\\
		&= \|-\{(\bm{\theta}+\xi_j\bm{a}^j)\cdot\bm{q}\}^{-}\|_{L^2(\omega)} \le \|\bm{\theta}\cdot\bm{q}\|_{L^2(\omega)}+\|\bm{\xi}\|_{\bm{L}^2(\omega)},
	\end{align*}
	and the sought boundedness is thus asserted, being $\bm{\theta} \in \mathcal{C}^3(\overline{\omega};\mathbb{E}^3)$ and $\mathscr{F}$ bounded in $\bm{L}^2(\omega)$.
	
	Finally, to establish the Lipschitz continuity, for all $\bm{\xi}$ and $\bm{\eta} \in \bm{L}^2(\omega)$, we evaluate $\|\bm{\beta}(\bm{\xi})-\bm{\beta}(\bm{\eta})\|_{\bm{L}^2(\omega)}$. We have that
	\begin{equation*}
		\begin{aligned}
			&\|\bm{\beta}(\bm{\xi})-\bm{\beta}(\bm{\eta})\|_{\bm{L}^2(\omega)}=\left(\int_{\omega}\dfrac{1}{\sum_{\ell=1}^{3}|\bm{a}^\ell \cdot\bm{q}|^2}\left\{\left|[-\{(\bm{\theta}+\xi_i\bm{a}^i)\cdot\bm{q}\}^{-}] - [-\{(\bm{\theta}+\eta_j\bm{a}^j)\cdot\bm{q}\}^{-}] \right|^2 \left(\sum_{\ell=1}^{3}|\bm{a}^\ell \cdot\bm{q}|^2\right) \right\}\dd y\right)^{1/2}\\
			&=\left(\int_{\omega}\left|\left[-\{(\bm{\theta}+\xi_j\bm{a}^j)\cdot\bm{q}\}^{-}\right] - \left[-\{(\bm{\theta}+\eta_j\bm{a}^j)\cdot\bm{q}\}^{-}\right]\right|^2 \dd y\right)^{1/2}\\
			&=\left(\int_{\omega}\left|\dfrac{|(\bm{\theta}+\xi_j\bm{a}^j)\cdot\bm{q}|-(\bm{\theta}+\xi_j\bm{a}^j)\cdot\bm{q}}{2} - \dfrac{|(\bm{\theta}+\eta_j\bm{a}^j)\cdot\bm{q}|-(\bm{\theta}+\eta_j\bm{a}^j)\cdot\bm{q}}{2}\right|^2 \dd y\right)^{1/2}\\
			&\le\left(\int_{\omega}\left(\left|\dfrac{(\bm{\theta}+\xi_j\bm{a}^j)\cdot\bm{q}-(\bm{\theta}+\eta_j\bm{a}^j)\cdot\bm{q}}{2}\right| + \left|\dfrac{(\bm{\theta}+\xi_j\bm{a}^j)\cdot\bm{q}-(\bm{\theta}+\eta_j\bm{a}^j)\cdot\bm{q}}{2}\right|\right)^2 \dd y\right)^{1/2}\\
			&= \left\|(\bm{\theta}+\xi_j\bm{a}^j)\cdot\bm{q}-(\bm{\theta}+\eta_j\bm{a}^j)\cdot\bm{q}\right\|_{L^2(\omega)}\le \|\bm{\xi}-\bm{\eta}\|_{\bm{L}^2(\omega)},
		\end{aligned}
	\end{equation*}
	and the sought Lipschitz continuity is thus established. Note in passing that the Lipschitz constant is $L=1$. This completes the proof.
\end{proof}

Let $\kappa>0$ denote a penalty parameter which is meant to approach zero. The penalised version of Problem~\ref{problemK} is formulated as follows.

\begin{customprob}{$\mathcal{P}_{K,\kappa}^\varepsilon(\omega)$}
	\label{problem2}
	Find $\bm{\zeta}^\varepsilon_\kappa=(\zeta^\varepsilon_{\kappa,i}) \in \bm{V}_K(\omega)$ satisfying the following variational equations:
	\begin{equation*}
	\varepsilon \int_\omega a^{\alpha\beta\sigma\tau} \gamma_{\sigma\tau}(\bm{\zeta}^\varepsilon_\kappa) \gamma_{\alpha\beta} (\bm{\eta}) \sqrt{a} \dd y 
	+\dfrac{\varepsilon^3}{3}\int_{\omega} a^{\alpha\beta\sigma\tau} \rho_{\sigma\tau}(\bm{\zeta}^\varepsilon_\kappa) \rho_{\alpha\beta}(\bm{\eta}) \sqrt{a} \dd y
	+\dfrac{\varepsilon}{\kappa}\int_{\omega} \bm{\beta}(\bm{\zeta}^\varepsilon_\kappa) \cdot \bm{\eta} \dd y
	= \int_\omega p^{i,\varepsilon} \eta_i \sqrt{a} \dd y,
	\end{equation*}
	for all $\bm{\eta} = (\eta_i) \in \bm{V}_K(\omega)$.
	\bqed
\end{customprob}

The existence and uniqueness of solutions of Problem~\ref{problem2} can be established by resorting to the Minty-Browder theorem (cf., e.g., Theorem~9.14-1 of~\cite{PGCLNFAA}). For the sake of completeness, we present the proof of this existence and uniqueness result.

\begin{theorem}
\label{ex-un-kappa}
Let $\bm{q} \in\mathbb{E}^3$ be a given unit-norm vector. Assume that $\bm{\theta} \in \mathcal{C}^3(\overline{\omega};\mathbb{E}^3)$ is such that $\min_{y \in \overline{\omega}}(\bm{\theta}(y)\cdot\bm{q})>~0$.

Then, for each $\kappa>0$ and $\varepsilon>0$, Problem~\ref{problem2} admits a unique solution. Moreover, the family of solutions $\{\bm{\zeta}^\varepsilon_\kappa\}_{\kappa>0}$ is bounded in $\bm{V}_K(\omega)$ independently of $\kappa$, and 
$$
\bm{\zeta}^\varepsilon_\kappa \to \bm{\zeta}^\varepsilon,\quad\textup{ in }\bm{V}_K(\omega) \textup{ as }\kappa \to 0^+,
$$
where $\bm{\zeta}^\varepsilon$ is the solution of Problem~\ref{problemK}.
\end{theorem}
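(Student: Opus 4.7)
The plan is to apply the Minty-Browder theorem to obtain existence and uniqueness for each $\kappa>0$, and then to secure a $\kappa$-uniform a priori bound that permits one to extract a weak limit, identify it with $\bm{\zeta}^\varepsilon$ via uniqueness of solutions of Problem~\ref{problemK}, and finally upgrade the convergence to strong in $\bm{V}_K(\omega)$ by Korn's inequality on a surface (Theorem~\ref{kornsurface}).

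For existence and uniqueness, I would introduce the nonlinear operator $T_\kappa : \bm{V}_K(\omega) \to \bm{V}_K(\omega)'$ by
\[
\langle T_\kappa \bm{\xi}, \bm{\eta}\rangle := \varepsilon B_M(\bm{\xi}, \bm{\eta}) + \frac{\varepsilon^3}{3} B_F(\bm{\xi}, \bm{\eta}) + \frac{\varepsilon}{\kappa} \int_\omega \bm{\beta}(\bm{\xi}) \cdot \bm{\eta} \dd y,
\]
so that Problem~\ref{problem2} becomes $T_\kappa \bm{\zeta}^\varepsilon_\kappa = \ell^\varepsilon$. Symmetry and positive semi-definiteness of the bilinear part, together with Lemma~\ref{lem:beta}, make $T_\kappa$ monotone; Lipschitz continuity of $\bm{\beta}$ on $\bm{L}^2(\omega)$ and the continuous embedding $\bm{V}_K(\omega) \hookrightarrow \bm{L}^2(\omega)$ make $T_\kappa$ (hemi)continuous. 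The assumption $\min_{\overline{\omega}}(\bm{\theta} \cdot \bm{q}) > 0$ yields $\bm{\beta}(\bm{0})= \bm{0}$, so monotonicity of $\bm{\beta}$ gives $\int_\omega \bm{\beta}(\bm{\xi})\cdot \bm{\xi}\dd y \ge 0$; combined with Theorem~\ref{kornsurface}, this establishes coercivity of $T_\kappa$. The Minty-Browder theorem (Theorem~9.14-1 of~\cite{PGCLNFAA}) then delivers existence and uniqueness of $\bm{\zeta}^\varepsilon_\kappa$.

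To produce the uniform bound, I would test the equation with $\bm{\eta} = \bm{\zeta}^\varepsilon_\kappa$, drop the non-negative penalty term, and combine Theorem~\ref{kornsurface} with the continuity of $\ell^\varepsilon$; the same test also yields the quantitative estimate $\int_\omega \bm{\beta}(\bm{\zeta}^\varepsilon_\kappa)\cdot \bm{\zeta}^\varepsilon_\kappa \dd y \le C \kappa$. I would extract a subsequence $\bm{\zeta}^\varepsilon_{\kappa_n} \rightharpoonup \bm{\zeta}^*$ in $\bm{V}_K(\omega)$; by Rellich-Kondra\v{s}ov the convergence is strong in $\bm{L}^2(\omega)$. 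To verify $\bm{\zeta}^* \in \bm{U}_K(\omega)$, I would retrace the monotonicity calculation of Lemma~\ref{lem:beta} with $\bm{\eta} = \bm{0}$ to obtain
\[
\int_\omega \bm{\beta}(\bm{\zeta}^\varepsilon_{\kappa_n})\cdot \bm{\zeta}^\varepsilon_{\kappa_n}\dd y \ge \int_\omega \frac{|\{(\bm{\theta}+\zeta^\varepsilon_{\kappa_n,j}\bm{a}^j)\cdot\bm{q}\}^{-}|^2}{\sqrt{\sum_\ell |\bm{a}^\ell\cdot\bm{q}|^2}}\dd y,
\]
and pass to the limit using the strong $L^2$-convergence: the right-hand side must vanish, forcing $\{(\bm{\theta}+\zeta^*_j\bm{a}^j)\cdot\bm{q}\}^{-} = 0$ almost everywhere in $\omega$.

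For the identification $\bm{\zeta}^* = \bm{\zeta}^\varepsilon$, I would pick $\bm{\eta} \in \bm{U}_K(\omega)$ arbitrarily and test Problem~\ref{problem2} with $\bm{\eta} - \bm{\zeta}^\varepsilon_{\kappa_n}$. Since $\bm{\beta}(\bm{\eta}) = \bm{0}$, monotonicity gives $\int_\omega \bm{\beta}(\bm{\zeta}^\varepsilon_{\kappa_n})\cdot(\bm{\zeta}^\varepsilon_{\kappa_n} - \bm{\eta})\dd y \ge 0$, so the penalty term can be dropped with the favourable sign, leaving $\varepsilon B_M(\bm{\zeta}^\varepsilon_{\kappa_n}, \bm{\eta} - \bm{\zeta}^\varepsilon_{\kappa_n}) + \tfrac{\varepsilon^3}{3} B_F(\bm{\zeta}^\varepsilon_{\kappa_n}, \bm{\eta} - \bm{\zeta}^\varepsilon_{\kappa_n}) \ge \ell^\varepsilon(\bm{\eta} - \bm{\zeta}^\varepsilon_{\kappa_n})$. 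Weak convergence treats the cross terms and weak lower semi-continuity treats the quadratic terms; the variational inequality of Problem~\ref{problemK} follows in the limit, and by its uniqueness $\bm{\zeta}^* = \bm{\zeta}^\varepsilon$, which forces the full family to converge weakly. To upgrade to strong convergence, I would test the penalised equation with $\bm{\eta} = \bm{\zeta}^\varepsilon_\kappa - \bm{\zeta}^\varepsilon$ and exploit $\bm{\beta}(\bm{\zeta}^\varepsilon) = \bm{0}$ together with monotonicity of $\bm{\beta}$ to arrive at
\[
\varepsilon B_M(\bm{\zeta}^\varepsilon_\kappa - \bm{\zeta}^\varepsilon, \bm{\zeta}^\varepsilon_\kappa - \bm{\zeta}^\varepsilon) + \frac{\varepsilon^3}{3} B_F(\bm{\zeta}^\varepsilon_\kappa - \bm{\zeta}^\varepsilon, \bm{\zeta}^\varepsilon_\kappa - \bm{\zeta}^\varepsilon) \le \ell^\varepsilon(\bm{\zeta}^\varepsilon_\kappa - \bm{\zeta}^\varepsilon) - \varepsilon B_M(\bm{\zeta}^\varepsilon, \bm{\zeta}^\varepsilon_\kappa - \bm{\zeta}^\varepsilon) - \frac{\varepsilon^3}{3} B_F(\bm{\zeta}^\varepsilon, \bm{\zeta}^\varepsilon_\kappa - \bm{\zeta}^\varepsilon),
\]
whose right-hand side vanishes in the limit by weak convergence; Theorem~\ref{kornsurface} then coerces the left-hand side to give $\|\bm{\zeta}^\varepsilon_\kappa - \bm{\zeta}^\varepsilon\|_{\bm{V}_K(\omega)} \to 0$. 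The main obstacle in this chain is the identification $\bm{\zeta}^* \in \bm{U}_K(\omega)$: only the quantitative lower bound hidden inside the monotonicity computation of Lemma~\ref{lem:beta}, combined with the $O(\kappa)$ decay of the penalty energy, permits the vanishing of the negative-part functional in the $L^2$-limit to be translated into pointwise satisfaction of the half-space constraint.
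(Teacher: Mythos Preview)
Your proposal is correct and follows essentially the same route as the paper's proof: Minty--Browder for existence/uniqueness, testing with $\bm{\zeta}^\varepsilon_\kappa$ for the $\kappa$-uniform bound and the $O(\kappa)$ decay of the penalty energy, weak extraction, verification that the weak limit lies in $\bm{U}_K(\omega)$, and the strong-convergence upgrade via Korn's inequality after testing with $\bm{\zeta}^\varepsilon_\kappa-\bm{\zeta}^\varepsilon$. The only noteworthy deviation is that you identify the constraint $\bm{\zeta}^*\in\bm{U}_K(\omega)$ via Rellich--Kondra\v{s}ov and the $L^2$-continuity of $\bm{\beta}$, whereas the paper invokes an abstract monotone-operator limit result (Theorem~9.13-2 of~\cite{PGCLNFAA}) after recording $\bm{\beta}(\bm{\zeta}^\varepsilon_\kappa)\to\bm{0}$ in $\bm{L}^2(\omega)$ and $\langle\hat{\bm{\beta}}(\bm{\zeta}^\varepsilon_\kappa),\bm{\zeta}^\varepsilon_\kappa\rangle\to 0$; your compactness argument is a touch more elementary and equally valid.
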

\begin{proof}
Let us define the operator $\bm{A}^\varepsilon:\bm{V}_K(\omega) \to \bm{V}'_K(\omega)$ by
\begin{equation*}
\langle \bm{A}^\varepsilon \bm{\xi},\bm{\eta}\rangle_{\bm{V}'_M(\omega), \bm{V}_M(\omega)}:=\varepsilon\int_{\omega} a^{\alpha\beta\sigma\tau} \gamma_{\sigma\tau}(\bm{\xi}) \gamma_{\alpha\beta}(\bm{\eta}) \sqrt{a} \dd y+\dfrac{\varepsilon^3}{3}\int_{\omega} a^{\alpha\beta\sigma\tau} \rho_{\sigma\tau}(\bm{\xi}) \rho_{\alpha\beta}(\bm{\eta}) \sqrt{a} \dd y.
\end{equation*}

We observe that the operator $\bm{A}^\varepsilon$ is linear, continuous and, thanks to the uniform positive-definiteness of the fourth order two-dimensional tensor $(a^{\alpha\beta\sigma\tau})$ (viz., e.g., Theorem~3.3-2 of~\cite{Ciarlet2000}) and Korn's inequality (Theorem~\ref{kornsurface}). We have that:
\begin{equation}
\label{Aeps}
\langle \bm{A}^\varepsilon \bm{\xi} -\bm{A}^\varepsilon\bm{\eta},\bm{\xi}-\bm{\eta}\rangle_{\bm{V}'_K(\omega), \bm{V}_K(\omega)} \ge \dfrac{\varepsilon^3}{3c_e c_0^2} \|\bm{\xi}-\bm{\eta}\|_{\bm{V}_K(\omega)}^2, \quad\textup{ for all }\bm{\xi}, \bm{\eta} \in \bm{V}_K(\omega).
\end{equation}

Define the operator $\hat{\bm{\beta}}:\bm{V}_K(\omega) \to \bm{V}'_K(\omega)$ as the following composition
\begin{equation*}
\bm{V}_K(\omega) \hookrightarrow \bm{L}^2(\omega) \xrightarrow{\bm{\beta}} \bm{L}^2(\omega) \hookrightarrow \bm{V}'_K(\omega).
\end{equation*}

Thanks to the monotonicity of $\bm{\beta}$ established in Lemma~\ref{lem:beta}, we easily infer that $\hat{\bm{\beta}}$ is monotone.
Therefore, as a direct consequence of~\eqref{Aeps} and Lemma~\ref{lem:beta}, we can infer that the operator $(\bm{A}^\varepsilon+\hat{\bm{\beta}}):\bm{V}_K(\omega) \to \bm{V}'_K(\omega)$ is strongly monotone. To see this, observe that for all $\bm{\eta}$, $\bm{\xi} \in \bm{V}_K(\omega)$ with $\bm{\xi}\neq\bm{\eta}$, we have that
\begin{equation*}
\begin{aligned}
&\langle (\bm{A}^\varepsilon+\hat{\bm{\beta}}) \bm{\xi} -(\bm{A}^\varepsilon+\hat{\bm{\beta}})\bm{\eta},\bm{\xi}-\bm{\eta}\rangle_{\bm{V}'_K(\omega), \bm{V}_K(\omega)}\\
&= \langle \bm{A}^\varepsilon \bm{\xi} -\bm{A}^\varepsilon\bm{\eta},\bm{\xi}-\bm{\eta}\rangle_{\bm{V}'_K(\omega), \bm{V}_K(\omega)}\\
&\quad+\langle\hat{\bm{\beta}}(\bm{\xi})-\hat{\bm{\beta}}(\bm{\eta}),\bm{\xi}-\bm{\eta}\rangle_{\bm{V}'_K(\omega), \bm{V}_K(\omega)}\ge \dfrac{\varepsilon^3}{3c_e c_0^2} \|\bm{\xi}-\bm{\eta}\|_{\bm{V}_K(\omega)}^2>0.
\end{aligned}
\end{equation*}

Similarly, we can establish the coerciveness of the operator $(\bm{A}^\varepsilon+\hat{\bm{\beta}})$. Indeed, we have that
\begin{equation*}
\dfrac{\langle (\bm{A}^\varepsilon+\hat{\bm{\beta}}) \bm{\eta},\bm{\eta}\rangle_{\bm{V}'_K(\omega), \bm{V}_K(\omega)}}{\|\bm{\eta}\|_{\bm{V}_K(\omega)}} =\dfrac{\langle\bm{A}^\varepsilon\bm{\eta},\bm{\eta}\rangle_{\bm{V}'_K(\omega), \bm{V}_K(\omega)}}{\|\bm{\eta}\|_{\bm{V}_K(\omega)}} +\dfrac{\langle \hat{\bm{\beta}}(\bm{\eta}),\bm{\eta}\rangle_{\bm{V}'_K(\omega), \bm{V}_K(\omega)}}{\|\bm{\eta}\|_{\bm{V}_K(\omega)}} \ge \dfrac{\varepsilon^3}{3c_e c_0^2} \|\bm{\eta}\|_{\bm{V}_K(\omega)},
\end{equation*}
where the last inequality is obtained by combining~\eqref{Aeps}, Lemma~\ref{lem:beta} with the fact that $\bm{0} \in \bm{U}_K(\omega)$ or, equivalently, that $\bm{\beta}(\bm{0})=\bm{0}$ in $\bm{L}^2(\omega)$.

The continuity of the operator $\bm{A}^\varepsilon$ and the Lipschitz continuity of the operator $\bm{\beta}$ established in Lemma~\ref{lem:beta} in turn give that the operator $(\bm{A}^\varepsilon+\hat{\bm{\beta}})$ is hemicontinuous, and we are in position to apply the Minty-Browder theorem (cf., e.g., Theorem~9.14-1 of~\cite{PGCLNFAA}) to establish that there exists a unique solution $\bm{\zeta}^\varepsilon_\kappa \in \bm{V}_K(\omega)$ for Problem~\ref{problem2}.

Observe that the fact that $\min_{y\in\overline{\omega}} (\bm{\theta}(y)\cdot\bm{q})>0$ implies:
\begin{equation}
	\label{beta-2}
	\begin{aligned}
		&\int_{\omega} \bm{\beta}(\bm{\zeta}^\varepsilon_\kappa) \cdot\bm{\zeta}^\varepsilon_\kappa \dd y
		=\int_{\omega}\dfrac{1}{\sqrt{\sum_{\ell=1}^{3}|\bm{a}^\ell \cdot\bm{q}|^2}} \left(-\{(\bm{\theta}+\zeta^\varepsilon_{\kappa,j}\bm{a}^j)\cdot\bm{q}\}^{-}\right) (\zeta^\varepsilon_{\kappa,i}\bm{a}^i\cdot\bm{q}) \dd y\\
		&=\int_{\omega}\dfrac{1}{\sqrt{\sum_{\ell=1}^{3}|\bm{a}^\ell \cdot\bm{q}|^2}} \left(-\{(\bm{\theta}+\zeta^\varepsilon_{\kappa,j}\bm{a}^j)\cdot\bm{q}\}^{-}\right) ((\bm{\theta}+\zeta^\varepsilon_{\kappa,i}\bm{a}^i)\cdot\bm{q}) \dd y\\
		&\quad-\int_{\omega}\dfrac{1}{\sqrt{\sum_{\ell=1}^{3}|\bm{a}^\ell \cdot\bm{q}|^2}} \left(-\{(\bm{\theta}+\zeta^\varepsilon_{\kappa,j}\bm{a}^j)\cdot\bm{q}\}^{-}\right) (\bm{\theta}\cdot\bm{q}) \dd y
		\ge \int_{\omega}\dfrac{|-\{(\bm{\theta}+\zeta^\varepsilon_{\kappa,i}\bm{a}^i)\cdot \bm{q}\}^{-}|^2}{\sqrt{\sum_{\ell=1}^{3}|\bm{a}^\ell \cdot\bm{q}|^2}}  \dd y.
	\end{aligned}
\end{equation}

Furthermore, if we specialize $\bm{\eta}=\bm{\zeta}^\varepsilon_\kappa$ in the variational equations of Problem~\ref{problem2}, we have that an application of Korn's inequality (Theorem~\ref{korn}), the monotonicity of $\bm{\beta}$ (Lemma~\ref{lem:beta}), the strict positiveness and boundedness of $a$ (Theorems~ 3.1-1 of~\cite{Ciarlet2000}), the uniform positive definiteness of the fourth order two-dimensional elasticity tensor $(a^{\alpha\beta\sigma\tau})$ (Theorem~3.3-2 of~\cite{Ciarlet2000}), and the fact that $\bm{0} \in \bm{U}_M(\omega)$ or, equivalently, that $\bm{\beta}(\bm{0})=\bm{0}$ in $\bm{L}^2(\omega)$ give:
\begin{equation*}
\begin{aligned}
\dfrac{\varepsilon^3\sqrt{a_0}}{3c_0^2 c_e}\|\bm{\zeta}^\varepsilon_\kappa\|_{\bm{V}_K(\omega)}^2 
&\le \varepsilon\int_{\omega} a^{\alpha\beta\sigma\tau} \gamma_{\sigma\tau}(\bm{\zeta}^\varepsilon_\kappa) \gamma_{\alpha\beta}(\bm{\zeta}^\varepsilon_\kappa) \sqrt{a} \dd y 
+\dfrac{\varepsilon^3}{3}\int_{\omega} a^{\alpha\beta\sigma\tau} \rho_{\sigma\tau}(\bm{\zeta}^\varepsilon_\kappa) \rho_{\alpha\beta}(\bm{\zeta}^\varepsilon_\kappa) \sqrt{a} \dd y 
+\dfrac{\varepsilon}{\kappa} \int_{\omega} \bm{\beta}(\bm{\zeta}^\varepsilon_\kappa) \cdot\bm{\zeta}^\varepsilon_\kappa \dd y\\
&\le \|\bm{p}^\varepsilon\|_{\bm{L}^2(\omega)} \|\bm{\zeta}^\varepsilon_\kappa\|_{\bm{V}_K(\omega)} \sqrt{a_1}
=\varepsilon \sqrt{a_1}\|\bm{p}\|_{\bm{L}^2(\omega)} \|\bm{\zeta}^\varepsilon_\kappa\|_{\bm{V}_K(\omega)}.
\end{aligned}
\end{equation*}
Note that the last equality holds thanks to the definition of $\bm{p}=(p^i)$ and $\bm{p}^\varepsilon=(p^{i,\varepsilon})$ introduced, respectively, in Problem~\ref{problemK} and Problem~\ref{problem1}. In conclusion, we have that:
\begin{equation}
\label{upper-bound-solution}
\|\bm{\zeta}^\varepsilon_\kappa\|_{\bm{V}_K(\omega)}
\le \dfrac{3 c_0^2 c_e \sqrt{a_1}}{\varepsilon^2 \sqrt{a_0}}\|\bm{p}\|_{\bm{L}^2(\omega)},\quad\textup{ for all }\kappa>0.
\end{equation}

By virtue of the definition of $p^{i,\varepsilon}$ and the assumptions on the data stated at the beginning of section~\ref{sec3}, we get that $\|\bm{\zeta}^\varepsilon_\kappa\|_{\bm{V}_K(\omega)}$ is bounded independently of $\kappa$. Therefore, by the Banach-Eberlein-Smulian theorem (cf., e.g., Theorem~5.14-4 of~\cite{PGCLNFAA}), we can extract a subsequence, still denoted $\{\bm{\zeta}^\varepsilon_\kappa\}_{\kappa>0}$ such that:
\begin{equation}
\label{beta-1}
\bm{\zeta}^\varepsilon_\kappa \rightharpoonup \bm{\zeta}^\varepsilon, \quad\textup{ in } \bm{V}_K(\omega) \textup{ as } \kappa\to0^+. 
\end{equation}

Specializing $\bm{\eta}=\bm{\zeta}^\varepsilon_\kappa$ in the variational equations of Problem~\ref{problem2} and applying~\eqref{beta-2} and~\eqref{beta-1} give that:
\begin{equation}
\label{beta-2.5}
\begin{aligned}
&\dfrac{\left(3\max\{\|\bm{a}^j \cdot \bm{q}\|_{\mathcal{C}^0(\overline{\omega})}^2;1\le j\le 3\}\right)^{-1/2}}{\kappa}\|-\{(\bm{\theta}+\zeta^\varepsilon_{\kappa,j}\bm{a}^j)\bm
q\}^{-}\|_{\bm{L}^2(\omega)}^2\\
&\le\dfrac{1}{\kappa}\int_{\omega} \bm{\beta}(\bm{\zeta}^\varepsilon_\kappa) \cdot\bm{\zeta}^\varepsilon_\kappa \dd y\le  \dfrac{3 c_0^2 c_e a_1}{\varepsilon^2 \sqrt{a_0}}\|\bm{p}\|_{\bm{L}^2(\omega)}^2.
\end{aligned}
\end{equation}

Therefore, we have that an application of~\eqref{beta-2.5} gives that
\begin{equation}
\label{beta-3}
\bm{\beta}(\bm{\zeta}^\varepsilon_\kappa) \to \bm{0},\quad\textup{ in }\bm{L}^2(\omega) \textup{ as }\kappa\to0^+,
\end{equation}
and that
\begin{equation}
\label{beta-4}
\int_{\omega} \bm{\beta}(\bm{\zeta}^\varepsilon_\kappa) \cdot \bm{\zeta}^\varepsilon_\kappa \dd y=\langle\hat{\bm{\beta}}(\bm{\zeta}^\varepsilon_\kappa),\bm{\zeta}^\varepsilon_\kappa\rangle_{\bm{V}'_K(\omega), \bm{V}_K(\omega)} \to 0,\quad\textup{ as }\kappa\to0^+.
\end{equation}

Therefore, the monotonicity of $\hat{\bm{\beta}}$ (which is a direct consequence of Lemma~\ref{lem:beta}), and the the properties established in~\eqref{beta-1}, \eqref{beta-3} and~\eqref{beta-4} put us in a position to apply Theorem~9.13-2 of~\cite{PGCLNFAA}. We obtain that $\hat{\bm{\beta}}(\bm{\zeta}^\varepsilon)=\bm{0}$, so that $\bm{\zeta}^\varepsilon \in \bm{U}_K(\omega)$.

Observe that the monotonicity of $\bm{\beta}$ (viz. Lemma~\ref{lem:beta}), the properties of $\bm{\zeta}^\varepsilon_\kappa$, the continuity of the components $\gamma_{\alpha\beta}$ of the linearised change of metric tensor, the definition of $\bm{p}^\varepsilon$ (Problem~\ref{problemK}), and the weak convergence~\eqref{beta-1} give
\begin{align*}
\|\bm{\zeta}^\varepsilon_\kappa - \bm{\zeta}^\varepsilon\|_{\bm{V}_K(\omega)}^2 &\le
\dfrac{3c_0^2 c_e}{\varepsilon^2\sqrt{a_0}}\int_{\omega} a^{\alpha\beta\sigma\tau}\gamma_{\sigma\tau}(\bm{\zeta}^\varepsilon_\kappa - \bm{\zeta}^\varepsilon)\gamma_{\alpha\beta}(\bm{\zeta}^\varepsilon_\kappa - \bm{\zeta}^\varepsilon)\sqrt{a} \dd y\\
&\quad+\dfrac{c_0^2 c_e}{\sqrt{a_0}}\int_{\omega} a^{\alpha\beta\sigma\tau}\rho_{\sigma\tau}(\bm{\zeta}^\varepsilon_\kappa - \bm{\zeta}^\varepsilon)\rho_{\alpha\beta}(\bm{\zeta}^\varepsilon_\kappa - \bm{\zeta}^\varepsilon)\sqrt{a} \dd y\\
&=-\dfrac{3c_0^2 c_e}{\kappa\varepsilon^2 \sqrt{a_0}}\int_{\omega} \bm{\beta}(\bm{\zeta}^\varepsilon_\kappa) \cdot (\bm{\zeta}^\varepsilon_\kappa - \bm{\zeta}^\varepsilon) \dd y\\
&\quad+\dfrac{3c_0^2 c_e}{\varepsilon^3\sqrt{a_0}} \int_{\omega} p^{i, \varepsilon} (\zeta^\varepsilon_{\kappa,i}-\zeta^\varepsilon_i) \sqrt{a} \dd y
-\dfrac{3c_0^2 c_e}{\varepsilon^2\sqrt{a_0}}\int_{\omega} a^{\alpha\beta\sigma\tau}\gamma_{\sigma\tau}(\bm{\zeta}^\varepsilon)\gamma_{\alpha\beta}(\bm{\zeta}^\varepsilon_\kappa - \bm{\zeta}^\varepsilon)\sqrt{a} \dd y\\
&\quad-\dfrac{c_0^2 c_e}{\sqrt{a_0}}\int_{\omega} a^{\alpha\beta\sigma\tau}\rho_{\sigma\tau}(\bm{\zeta}^\varepsilon)\rho_{\alpha\beta}(\bm{\zeta}^\varepsilon_\kappa - \bm{\zeta}^\varepsilon)\sqrt{a} \dd y\\
&\le \dfrac{3c_0^2 c_e}{\varepsilon^3\sqrt{a_0}} \int_{\omega} p^{i, \varepsilon} (\zeta^\varepsilon_{\kappa,i}-\zeta^\varepsilon_i) \sqrt{a} \dd y
-\dfrac{3c_0^2 c_e}{\varepsilon^2 \sqrt{a_0}}\int_{\omega} a^{\alpha\beta\sigma\tau}\gamma_{\sigma\tau}(\bm{\zeta}^\varepsilon)\gamma_{\alpha\beta}(\bm{\zeta}^\varepsilon_\kappa - \bm{\zeta}^\varepsilon)\sqrt{a} \dd y\\
&\quad-\dfrac{c_0^2 c_e}{\sqrt{a_0}}\int_{\omega} a^{\alpha\beta\sigma\tau}\rho_{\sigma\tau}(\bm{\zeta}^\varepsilon)\rho_{\alpha\beta}(\bm{\zeta}^\varepsilon_\kappa - \bm{\zeta}^\varepsilon)\sqrt{a} \dd y\\
&=\dfrac{3 c_0^2 c_e}{\varepsilon^3 \sqrt{a_0}} \int_{\omega} p^i (\zeta^\varepsilon_{\kappa,i}-\zeta^\varepsilon_i) \sqrt{a} \dd y
-\dfrac{3 c_0^2 c_e}{\varepsilon^2 \sqrt{a_0}}\int_{\omega} a^{\alpha\beta\sigma\tau}\gamma_{\sigma\tau}(\bm{\zeta}^\varepsilon)\gamma_{\alpha\beta}(\bm{\zeta}^\varepsilon_\kappa - \bm{\zeta}^\varepsilon)\sqrt{a} \dd y\\
&\quad-\dfrac{c_0^2 c_e}{\sqrt{a_0}}\int_{\omega} a^{\alpha\beta\sigma\tau}\rho_{\sigma\tau}(\bm{\zeta}^\varepsilon)\rho_{\alpha\beta}(\bm{\zeta}^\varepsilon_\kappa - \bm{\zeta}^\varepsilon)\sqrt{a} \dd y\to 0,
\end{align*}
as $\kappa \to 0^+$. Observe that the latter term is bounded independently of $\kappa$ although, in general, it is not bounded independently of $\varepsilon$. In conclusion, we have been able to establish the strong convergence:
\begin{equation}
\label{beta-5}
\bm{\zeta}^\varepsilon_\kappa \to \bm{\zeta}^\varepsilon,\quad\textup{ in } \bm{V}_K(\omega) \textup{ as } \kappa \to0^+.
\end{equation}

Specialising $(\bm{\eta}-\bm{\zeta}^\varepsilon_\kappa)\in\bm{V}_K(\omega)$ in the variational equations of Problem~\ref{problem2}, with $\bm{\eta}\in\bm{U}_K(\omega)$, the monotonicity of $\bm{\beta}$, the convergence~\eqref{beta-3} and the convergence~\eqref{beta-5} immediately give that the limit $\bm{\zeta}^\varepsilon\in\bm{U}_K(\omega)$ satisfies the variational inequalities in Problem~\ref{problemK}. This completes the proof.
\end{proof}

In order to conduct a sound numerical analysis of the obstacle problem under consideration, we need to prove a preparatory result concerning the augmentation of regularity of the solution of Problem~\ref{problem2} by resorting to the finite difference quotients approach originally proposed by Agmon, Douglis \& Nirenberg~\cite{AgmDouNir1959,AgmDouNir1964}, as well as the approach proposed by Frehse~\cite{Frehse1971} for variational inequalities, that was later on generalised in~\cite{Pie-2022-interior,Pie2020-1}.

Recalling that $\bm{\zeta}^\varepsilon_\kappa$ denotes the solution of Problem~\ref{problem2}, in the same spirit as Theorem~7.1-3(b) of~\cite{Ciarlet2000} we define
$$
n^{\alpha\beta,\varepsilon}_\kappa:=\varepsilon a^{\alpha\beta\sigma\tau}\gamma_{\sigma\tau}(\bm{\zeta}^\varepsilon_\kappa),
\qquad m^{\alpha\beta,\varepsilon}_\kappa:=\dfrac{\varepsilon^3}{3} a^{\alpha\beta\sigma\tau} \rho_{\sigma\tau}(\bm{\zeta}^\varepsilon_\kappa),
$$
and we also define
\begin{align*}
n^{\alpha\beta,\varepsilon}_\kappa|_{\beta}&:=\partial_\beta n^{\alpha\beta,\varepsilon}_\kappa+\Gamma^\alpha_{\beta\tau}n^{\beta\tau,\varepsilon}_\kappa+\Gamma^\beta_{\beta\tau}n^{\alpha\tau,\varepsilon}_\kappa,\\
m^{\alpha\beta,\varepsilon}_\kappa|_{\alpha\beta}&:=\partial_\alpha(m^{\alpha\beta,\varepsilon}_\kappa|_\beta)+\Gamma^\tau_{\alpha\tau}(m^{\alpha\beta,\varepsilon}_\kappa|_\beta).
\end{align*}

If the solution $\bm{\zeta}^\varepsilon_\kappa$ of Problem~\ref{problem2} is \emph{smooth enough}, then it is immediate to see that, in the same spirit of Theorem~7.1-3 of~\cite{Ciarlet2000}, it satisfies the following boundary value problem:
\begin{equation}
\label{BVP}
\begin{cases}
	-(n^{\alpha\beta,\varepsilon}_\kappa+b_\sigma^\alpha m^{\sigma\beta,\varepsilon}_\kappa)|_{\beta}-b_\sigma^\alpha(m^{\sigma\beta,\varepsilon}_\kappa|_\beta)+\dfrac{\varepsilon}{\kappa\sqrt{a}}\beta_\alpha(\bm{\zeta}^\varepsilon_\kappa)&=p^{\alpha,\varepsilon},\textup{ in }\omega,\\
	\\
	m^{\alpha\beta,\varepsilon}_\kappa|_{\alpha\beta}-b_\alpha^\sigma b_{\sigma\beta} m^{\alpha\beta,\varepsilon}_\kappa -b_{\alpha\beta} n^{\alpha\beta,\varepsilon}_\kappa+\dfrac{\varepsilon}{\kappa\sqrt{a}}\beta_3(\bm{\zeta}^\varepsilon_\kappa)&=p^{3,\varepsilon},\textup{ in }\omega,\\
	\\
	\zeta^\varepsilon_{\kappa,i}=\partial_{\nu}\zeta^\varepsilon_{\kappa,3}=0,\textup{ on }\gamma.
\end{cases}
\end{equation}

\section{Augmentation of the regularity of the solution of Problem~\ref{problem2}}
\label{sec:aug-interior}

Let $\omega_0\subset \omega$ and $\omega_1 \subset \omega$ be such that
\begin{equation}
\label{sets}
\omega_1 \subset\subset \omega_0 \subset \subset \omega.
\end{equation}

Let $\varphi_1 \in \mathcal{D}(\omega)$ be such that 
$$
\text{supp }\varphi_1 \subset\subset \omega_1 \textup{ and } 0\le \varphi_1 \le 1.
$$

By the definition of the symbol $\subset\subset$ in~\eqref{sets}, we obtain that the quantity
\begin{equation}
\label{d}
d=d(\varphi_1):=\dfrac{1}{2}\min\{\textup{dist}(\partial\omega_1,\partial\omega_0),\textup{dist}(\partial\omega_0,\gamma),\textup{dist}(\textup{supp }\varphi_1,\partial\omega_1)\}
\end{equation}
is strictly greater than zero. 

Denote by $D_{\rho h}$ the first order (forward) finite difference quotient of either a function or a vector field in the canonical direction $\bm{e}_\rho$ of $\mathbb{R}^2$ and with increment size $0<h<d$ sufficiently small. We can regard the first order (forward) finite difference quotient of a function as a linear operator defined as follows:
$$
D_{\rho h}: L^2(\omega) \to L^2(\omega_0).
$$

The first order finite difference quotient of a function $\xi$ in the canonical direction $\bm{e}_\rho$ of $\mathbb{R}^2$ and with increment size $0<h<d$ is defined by:
$$
D_{\rho h}\xi(y):=\dfrac{\xi(y+h\bm{e}_\rho)-\xi(y)}{h},
$$
for all (or, possibly, a.a.) $y\in\omega$ such that $(y+h\bm{e}_\rho)\in\omega$.

The first order finite difference quotient of a vector field $\bm{\xi}=(\xi_i)$ in the canonical direction $\bm{e}_\rho$ of $\mathbb{R}^2$ and with increment size $0<h<d$ is defined by
$$
D_{\rho h}\bm{\xi}(y):=\dfrac{\bm{\xi}(y+h\bm{e}_\rho)-\bm{\xi}(y)}{h},
$$
or, equivalently,
$$
D_{\rho h}\bm{\xi}(y)=(D_{\rho h}\xi_i(y)).
$$

Similarly, we can show that the first order (forward) finite difference quotient of a vector field is a linear operator from $\bm{L}^2(\omega)$ to $\bm{L}^2(\omega_0)$. 

We define the second order finite difference quotient of a function $\xi$ in the canonical direction $\bm{e}_\rho$ of $\mathbb{R}^2$ and with increment size $0<h<d$ by
$$
\delta_{\rho h}\xi(y):=\dfrac{\xi(y+h \bm{e}_\rho)-2 \xi(y)+\xi(y-h \bm{e}_\rho)}{h^2},
$$
for all (or, possibly, a.a.) $y \in \omega$ such that $(y\pm h\bm{e}_\rho) \in \omega$.

The second order finite difference quotient of a vector field $\bm{\xi}=(\xi_i)$ in the canonical direction $\bm{e}_\rho$ of $\mathbb{R}^2$ and with increment size $0<h<d$ is defined by
$$
\delta_{\rho h}\bm{\xi}(y):=\left(\dfrac{\xi_i(y+h \bm{e}_\rho)-2 \xi_i(y)+\xi_i(y-h \bm{e}_\rho)}{h^2}\right),
$$
for all (or, possibly, a.a.) $y \in \omega$ such that $(y\pm h\bm{e}_\rho) \in \omega$.

Define, following page~293 of~\cite{Evans2010}, the mapping $D_{-\rho h}:L^2(\omega) \to L^2(\omega_0)$ by
$$
D_{-\rho h}\xi(y):=\dfrac{\xi(y)-\xi(y-h\bm{e}_\rho)}{h},
$$
as well as the mapping $D_{-\rho h}:\bm{L}^2(\omega) \to \bm{L}^2(\omega_0)$ by
$$
D_{-\rho h}\bm{\xi}(y):=\dfrac{\bm{\xi}(y)-\bm{\xi}(y-h\bm{e}_\rho)}{h}.
$$

Note in passing that the second order finite difference quotient of a function $\xi$ can be expressed in terms of the first order finite difference quotient via the following identity:
\begin{equation*}
\label{ide}
\delta_{\rho h} \xi=D_{-\rho h} D_{\rho h} \xi.
\end{equation*}

Similarly, the second order finite difference quotient of a vector field $\bm{\xi}=(\xi_i)$ can be expressed in terms of the first order finite difference quotient via the following identity:
\begin{equation*}
\label{ide2}
\delta_{\rho h} \bm{\xi}=D_{-\rho h} D_{\rho h} \bm{\xi}.
\end{equation*}

	Let us define the translation operator $E$ in the canonical direction $\bm{e}_\rho$ of $\mathbb{R}^2$ and with increment size $0<h<d$ for a smooth enough function $v:\omega_0 \to \mathbb{R}$ by
\begin{align*}
E_{\rho h} v(y)&:=v(y+h \bm{e}_\rho),\\
E_{-\rho h} v(y)&:=v(y-h \bm{e}_\rho).
\end{align*}

Moreover, the following identities can be easily checked out (cf.\, page~293 of~\cite{Evans2010}, \cite{Frehse1971} and~\cite{Pie2020-1}):
\begin{align}
D_{\rho h}(v w)&=(E_{\rho h} w) (D_{\rho h} v)+v D_{\rho h} w, \label{D+}\\
D_{-\rho h}(v w)&=(E_{-\rho h} w )(D_{-\rho h} v)+v D_{-\rho h} w, \label{D-}\\
\delta_{\rho h}(vw)&=w \delta_{\rho h} v+(D_{\rho h}w )(D_{\rho h} v) +(D_{-\rho h}w)( D_{-\rho h} v)+v\delta_{\rho h}w.\label{delta+}
\end{align}



We observe that the following properties hold for finite difference quotients.

The proof of the first lemma can be found in Lemma~4 of~\cite{Pie-2022-interior} and for this reason it is omitted.
\begin{lemma}
	\label{lem:fdq-1}
	Let $\{v_k\}_{k\ge1}$ be a sequence in $\mathcal{C}^1(\overline{\omega})$ that converges to a certain element $v \in H^1(\omega)$ with respect to the norm $\|\cdot\|_{H^1(\omega)}$.
	Then, we have that for all $0<h<d$ and all $\rho\in\{1,2\}$, 
	\begin{equation*}
	\label{conv1}
	D_{\rho h} v\in H^1(\omega_0) \textup{ with }\partial_\alpha(D_{\rho h} v)=D_{\rho h} (\partial_\alpha v) \quad\textup{ and }\quad D_{\rho h} v_k\to D_{\rho h} v \textup{ in }H^1(\omega_0) \textup{ as } k\to\infty.
	\end{equation*}
	\qed
\end{lemma}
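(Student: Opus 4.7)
The plan is to reduce everything to the fact that the finite difference quotient operator $D_{\rho h}: L^2(\omega) \to L^2(\omega_0)$ is bounded and linear for $0 < h < d$, and that it commutes with the partial derivative operator when the argument is smooth.

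First I would observe that, since $0<h<d$ and $\omega_0 \subset\subset \omega$ with $\textup{dist}(\partial\omega_0,\gamma)\ge 2d$ by the choice of $d$ in~\eqref{d}, the translate $y+h\bm{e}_\rho$ lies in $\omega$ for every $y\in\omega_0$. Hence $D_{\rho h}w$ is well-defined on $\omega_0$ for every $w\in L^2(\omega)$, and by a direct change of variables
\[
\|D_{\rho h} w\|_{L^2(\omega_0)} \le \frac{2}{h}\,\|w\|_{L^2(\omega)},
\]
so $D_{\rho h}:L^2(\omega)\to L^2(\omega_0)$ is linear and continuous; the analogous statement for vector fields follows component by component.

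Next, for each $v_k\in\mathcal{C}^1(\overline\omega)$, the function $D_{\rho h} v_k$ belongs to $\mathcal{C}^1(\overline{\omega_0})$ and a direct pointwise calculation using the definition of $D_{\rho h}$ and the chain rule gives
\[
\partial_\alpha(D_{\rho h} v_k)(y) = \frac{\partial_\alpha v_k(y+h\bm{e}_\rho) - \partial_\alpha v_k(y)}{h} = D_{\rho h}(\partial_\alpha v_k)(y), \quad y\in\omega_0.
\]
Since $v_k \to v$ in $H^1(\omega)$, we have in particular $v_k \to v$ and $\partial_\alpha v_k \to \partial_\alpha v$ in $L^2(\omega)$, and so by the boundedness established above
\[
D_{\rho h} v_k \to D_{\rho h} v \quad\textup{and}\quad D_{\rho h}(\partial_\alpha v_k) \to D_{\rho h}(\partial_\alpha v) \quad\textup{in } L^2(\omega_0).
\]

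Combining these two convergences with the identity $\partial_\alpha(D_{\rho h} v_k) = D_{\rho h}(\partial_\alpha v_k)$ and the closedness of the weak partial derivative $\partial_\alpha: H^1(\omega_0)\to L^2(\omega_0)$, I conclude that $D_{\rho h} v \in H^1(\omega_0)$ with $\partial_\alpha(D_{\rho h} v)=D_{\rho h}(\partial_\alpha v)$. The $H^1(\omega_0)$-convergence of $D_{\rho h} v_k$ to $D_{\rho h} v$ then follows immediately by assembling the two $L^2$ convergences above with the identification of the weak derivative.

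There is no genuine obstacle here: the argument is essentially bookkeeping around the elementary fact that difference quotients and classical derivatives commute for $\mathcal{C}^1$ functions, together with a density/closedness argument to extend the identity to the $H^1$ limit. The only point that deserves care is verifying that the restriction to $\omega_0$ (as opposed to $\omega$) is exactly what allows the operator $D_{\rho h}$ to be defined on all of $L^2(\omega)$ without boundary issues, which is guaranteed by the choice of $d$ in~\eqref{d}.
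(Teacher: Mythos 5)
Your proof is correct: the reduction to the boundedness of $D_{\rho h}:L^2(\omega)\to L^2(\omega_0)$ (guaranteed by $0<h<d$ and the definition of $d$ in~\eqref{d}), the pointwise commutation $\partial_\alpha(D_{\rho h}v_k)=D_{\rho h}(\partial_\alpha v_k)$ for $\mathcal{C}^1$ functions, and the closedness of the weak derivative under $L^2$ convergence together give exactly the claimed conclusion. The paper itself omits the proof and cites Lemma~4 of~\cite{Pie-2022-interior}, and your argument is the standard one that reference employs, so there is nothing to flag.
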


As a direct consequence of Lemma~\ref{lem:fdq-1} and the definition of second order finite difference quotient, if $\{v_k\}_{k\ge1}$ is a sequence in $\mathcal{C}^1(\overline{\omega})$ that converges to a certain element $v \in H^1(\omega)$ with respect to the norm $\|\cdot\|_{H^1(\omega)}$, then, we have that for all $0<h<d$ and all $\rho\in\{1,2\}$, 
\begin{equation*}
\label{conv1-delta}
\delta_{\rho h} v\in H^1(\omega_1) \textup{ with }\partial_\alpha(\delta_{\rho h} v)=\delta_{\rho h} (\partial_\alpha v) \quad\textup{ and }\quad \delta_{\rho h} v_k\to \delta_{\rho h} v \textup{ in }H^1(\omega_1) \textup{ as } k\to\infty.
\end{equation*}

We also state the following elementary lemma, which exploits the compactness of the support of the test function $\varphi_1$ defined beforehand.
\begin{lemma}
\label{fdq-neg-part}
Let $f \in\mathcal{D}(\omega)$ with $\textup{supp }f \subset\subset \omega_1$.
Let $0<h<d$, where $d>0$ has been defined in~\eqref{d} and let $\rho\in\{1,2\}$ be given. Then,
\begin{equation*}
\int_{\omega} D_{\rho h}(-f^{-}) D_{\rho h}(f^{+}) \dd y \ge 0.
\end{equation*}
\end{lemma}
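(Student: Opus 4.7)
The plan is to prove this by establishing that the integrand is in fact pointwise non-negative almost everywhere. The compactness of $\textup{supp } f \subset\subset \omega_1$ together with $0<h<d$ ensures that the finite difference quotients of $f^+$ and $f^-$ are well-defined almost everywhere on $\omega$ (with both $f^{\pm}(y)$ and $f^{\pm}(y+h\bm{e}_\rho)$ making sense, as $f$ may be extended by zero outside its support without affecting anything in the integrand), so the statement reduces to a purely pointwise computation.

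First I would expand the two finite difference quotients at a generic point $y\in\omega$:
\begin{equation*}
D_{\rho h}(-f^{-})(y) = \frac{f^{-}(y) - f^{-}(y+h\bm{e}_\rho)}{h},
\qquad
D_{\rho h}(f^{+})(y) = \frac{f^{+}(y+h\bm{e}_\rho) - f^{+}(y)}{h}.
\end{equation*}
Multiplying and expanding the product yields
\begin{equation*}
D_{\rho h}(-f^{-})(y)\, D_{\rho h}(f^{+})(y) = \frac{1}{h^2}\Big[ f^{-}(y)f^{+}(y+h\bm{e}_\rho) + f^{-}(y+h\bm{e}_\rho)f^{+}(y) - f^{-}(y)f^{+}(y) - f^{-}(y+h\bm{e}_\rho)f^{+}(y+h\bm{e}_\rho)\Big].
\end{equation*}

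The key observation is that for any real-valued function $g$, its positive and negative parts satisfy $g^{+}(z)\,g^{-}(z)=0$ at every point $z$, since at any given point either $g^{+}(z)$ or $g^{-}(z)$ vanishes. Applied at $z=y$ and $z=y+h\bm{e}_\rho$, this makes the last two (subtracted) terms above vanish identically, leaving
\begin{equation*}
D_{\rho h}(-f^{-})(y)\, D_{\rho h}(f^{+})(y) = \frac{1}{h^2}\Big[ f^{-}(y)f^{+}(y+h\bm{e}_\rho) + f^{-}(y+h\bm{e}_\rho)f^{+}(y)\Big] \ge 0,
\end{equation*}
since $f^{\pm}\ge 0$ by definition. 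Integrating this pointwise inequality over $\omega$ then yields the desired conclusion.

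There is essentially no hard step here: the whole argument is the algebraic cancellation enabled by the disjoint-support property $f^{+}f^{-}\equiv 0$. The only thing one must be mildly careful about is that the integral is well-defined on all of $\omega$, but this is immediate from $\textup{supp } f \subset\subset \omega_1$ and $0<h<d$, which guarantees that all translated evaluations appearing in the integrand remain in $\omega$ (or yield zero by extension).
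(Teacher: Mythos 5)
Your proof is correct and follows essentially the same route as the paper's: both reduce the claim to the pointwise non-negativity of the integrand, which rests on the identity $f^{+}f^{-}\equiv 0$. You simply spell out the algebraic expansion that the paper leaves implicit, which if anything makes the argument more transparent.
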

\begin{proof}
By the definition of $D_{\rho h}$ and the definition of the positive and negative part of a function, we have that
\begin{equation*}
\int_{\omega} D_{\rho h}(-f^{-}) D_{\rho h}(f^{+}) \dd y
=-\int_{\omega} \left(\dfrac{f^{-}(y+h\bm{e}_\rho)-f^{-}(y)}{h}\right) \left(\dfrac{f^{+}(y+h\bm{e}_\rho)-f^{+}(y)}{h}\right) \dd y.
\end{equation*}

Therefore, the integrand under examination is always greater or equal than zero, as it was to be proved.
\end{proof}

We are ready to state the first main result of this section, that amounts to establishing that, under reasonable sufficient conditions on the problem data, the solution $\bm{\zeta}^\varepsilon_\kappa$ of Problem~\ref{problem2} is more regular in the interior of the definition domain.

\begin{theorem}
\label{aug:int}
Let $\omega_0$ and $\omega_1$ be as in~\eqref{sets}. Assume that there exists a unit norm vector $\bm{q} \in \mathbb{E}^3$ such that
\begin{equation*}
\min_{y \in \overline{\omega}} (\bm{\theta} (y) \cdot \bm{q}) > 0
\textup{ and }
\min_{y \in \overline{\omega}} (\bm{a}_3 (y) \cdot \bm{q}) > 0.
\end{equation*}

Assume also that the vector field $\bm{f}^\varepsilon=(f^{i,\varepsilon})$ defining the applied body force density is of class $L^2(\Omega^\varepsilon)\times L^2(\Omega^\varepsilon)\times L^2(\Omega^\varepsilon)$.
Then, the solution $\bm{\zeta}^\varepsilon_\kappa=(\zeta^\varepsilon_{\kappa,i})$ of Problem~\ref{problem2} is of class $\bm{V}_K(\omega)\cap H^2_{\textup{loc}}(\omega) \times H^2_{\textup{loc}}(\omega) \times H^3_{\textup{loc}}(\omega)$.
\end{theorem}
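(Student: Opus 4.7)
The plan is to apply the Nirenberg finite-difference-quotient technique, in the spirit of~\cite{Pie-2022-interior,Pie2020-1} but adapted to handle the monotone penalty nonlinearity via the sign-preserving identity recorded in Lemma~\ref{fdq-neg-part}. Fix $\omega_1\subset\subset\omega_0\subset\subset\omega$ and the cut-off $\varphi_1\in\mathcal{D}(\omega)$ as at the start of this section, pick a direction $\rho\in\{1,2\}$, and for $0<h<d$ with $d$ as in~\eqref{d} test the variational equations of Problem~\ref{problem2} against
\[
\bm{\eta}_h:=-D_{-\rho h}\bigl(\varphi_1^{\,2}\, D_{\rho h}\bm{\zeta}^\varepsilon_\kappa\bigr).
\]
Since $\zeta^\varepsilon_{\kappa,\alpha}\in H^1_0(\omega)$ and $\zeta^\varepsilon_{\kappa,3}\in H^2_0(\omega)$ and $\varphi_1$ is compactly supported, the field $\bm{\eta}_h$ has compact support in $\omega$ and hence belongs to $\bm{V}_K(\omega)$, so it is an admissible test. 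The goal is a bound $\|\varphi_1\, D_{\rho h}\bm{\zeta}^\varepsilon_\kappa\|_{\bm{V}_K(\omega)}\le C$ with $C=C(\varepsilon,\kappa,\varphi_1,\bm{\theta},\bm{f}^\varepsilon)$ independent of $0<h<d$. By the standard characterisation of Sobolev functions through uniform-in-$h$ difference-quotient bounds, this implies $\partial_\rho\zeta^\varepsilon_{\kappa,\alpha}\in H^1(\omega_1)$ and $\partial_\rho\zeta^\varepsilon_{\kappa,3}\in H^2(\omega_1)$ for each such $\omega_1$ and each $\rho\in\{1,2\}$, which is exactly the conclusion.

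For the elastic bilinear forms, apply the discrete integration-by-parts identity $\int_\omega f\,D_{-\rho h}g\dd y=-\int_\omega D_{\rho h}f\cdot g\dd y$ (valid because supports stay in $\omega$ for $h<d$) to move $D_{-\rho h}$ off $\bm{\eta}_h$, then use the discrete Leibniz rules~\eqref{D+}--\eqref{delta+} to exchange $D_{\rho h}$ with the smooth coefficients $\sqrt{a}\,a^{\alpha\beta\sigma\tau}$, $\Gamma^\sigma_{\alpha\beta}$, $b_{\alpha\beta}$ and $b^\sigma_\alpha$ that enter $\gamma_{\alpha\beta}$ and $\rho_{\alpha\beta}$. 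The principal term thus produced is
\[
\varepsilon\!\int_\omega\varphi_1^{\,2}\, a^{\alpha\beta\sigma\tau}\gamma_{\sigma\tau}(D_{\rho h}\bm{\zeta}^\varepsilon_\kappa)\,\gamma_{\alpha\beta}(D_{\rho h}\bm{\zeta}^\varepsilon_\kappa)\sqrt{a}\dd y+\tfrac{\varepsilon^3}{3}\!\int_\omega\varphi_1^{\,2}\, a^{\alpha\beta\sigma\tau}\rho_{\sigma\tau}(D_{\rho h}\bm{\zeta}^\varepsilon_\kappa)\,\rho_{\alpha\beta}(D_{\rho h}\bm{\zeta}^\varepsilon_\kappa)\sqrt{a}\dd y,
\]
while all commutator corrections are linear in $\|\bm{\zeta}^\varepsilon_\kappa\|_{\bm{V}_K(\omega)}$ and uniformly bounded by the $\mathcal{C}^1$-norms of the coefficients together with Theorem~\ref{ex-un-kappa}. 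Uniform positive-definiteness of $(a^{\alpha\beta\sigma\tau})$ and Korn's inequality on the surface (Theorem~\ref{kornsurface}) applied to the compactly supported field $\varphi_1\, D_{\rho h}\bm{\zeta}^\varepsilon_\kappa\in\bm{V}_K(\omega)$ convert the principal term into a lower bound on $\|\varphi_1\, D_{\rho h}\bm{\zeta}^\varepsilon_\kappa\|_{\bm{V}_K(\omega)}^{2}$. The load term $\int_\omega p^{i,\varepsilon}\eta_{h,i}\sqrt{a}\dd y$ is handled by Cauchy--Schwarz together with the estimate $\|D_{-\rho h}v\|_{\bm{L}^2(\omega_0)}\le\|\partial_\rho v\|_{\bm{L}^2(\omega)}$ for $v\in H^1$, applied to $v=\varphi_1^{\,2}D_{\rho h}\bm{\zeta}^\varepsilon_\kappa$; the resulting quantity is absorbed into the coercive left-hand side by Young's inequality.

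The main obstacle is the penalty contribution
\[
\tfrac{\varepsilon}{\kappa}\!\int_\omega\bm{\beta}(\bm{\zeta}^\varepsilon_\kappa)\cdot\bm{\eta}_h\dd y=\tfrac{\varepsilon}{\kappa}\!\int_\omega\varphi_1^{\,2}\, D_{\rho h}\bm{\beta}(\bm{\zeta}^\varepsilon_\kappa)\cdot D_{\rho h}\bm{\zeta}^\varepsilon_\kappa\dd y,
\]
again obtained by discrete integration by parts. Writing $w(\bm{\zeta}):=(\bm{\theta}+\zeta_j\bm{a}^j)\cdot\bm{q}$ and expanding $D_{\rho h}\beta_i(\bm{\zeta}^\varepsilon_\kappa)$ via~\eqref{D+}, the leading piece of the integrand reduces, after collecting the sum in $i$ by the identity $E_{\rho h}(\bm{a}^i\cdot\bm{q})\,D_{\rho h}\zeta^\varepsilon_{\kappa,i}=D_{\rho h}w(\bm{\zeta}^\varepsilon_\kappa)-D_{\rho h}(\bm{\theta}\cdot\bm{q})-\zeta^\varepsilon_{\kappa,i}\,D_{\rho h}(\bm{a}^i\cdot\bm{q})$, to
\[
\varphi_1^{\,2}\, E_{\rho h}\!\Bigl(\tfrac{1}{\sqrt{\sum_\ell|\bm{a}^\ell\cdot\bm{q}|^2}}\Bigr)\, D_{\rho h}\bigl(-w^-(\bm{\zeta}^\varepsilon_\kappa)\bigr)\, D_{\rho h}w(\bm{\zeta}^\varepsilon_\kappa),
\]
which is pointwise non-negative because the map $s\mapsto -s^-$ is monotone non-decreasing --- this is precisely the four-case computation underpinning Lemma~\ref{fdq-neg-part}. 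The remaining Leibniz pieces involve either $D_{\rho h}$ of the smooth factors $\bm{\theta}\cdot\bm{q}$ and $(\bm{a}^i\cdot\bm{q})/\sqrt{\sum_\ell|\bm{a}^\ell\cdot\bm{q}|^2}$, or the $L^2$-function $-w^-(\bm{\zeta}^\varepsilon_\kappa)$ (bounded by Lemma~\ref{lem:beta} and Theorem~\ref{ex-un-kappa}); all such pieces are controlled by Cauchy--Schwarz and absorbed into the coercive left-hand side by Young's inequality, with constants that depend on $\kappa$ (which is permissible, since $\kappa$-uniform estimates are not claimed here). Combining everything and letting $h\to0^+$ delivers the required $h$-uniform bound, and then varying $\rho$ and $\omega_1$ concludes the proof. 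The key technical point --- and the only genuinely new one relative to the standard Nirenberg scheme --- is the identification of the correct sign of the penalty piece via the monotonicity of $s\mapsto -s^-$ combined with the curvilinear-basis identity above.
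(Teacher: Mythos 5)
Your proposal is correct and follows essentially the same route as the paper's proof: Nirenberg-type second difference quotients of the cut-off solution as test fields, coercivity via Korn's inequality (Theorem~\ref{kornsurface}) after commuting $D_{\rho h}$ with the smooth coefficients, and the sign of the principal penalty contribution extracted from the monotonicity of $s\mapsto -s^{-}$ combined with the curvilinear identity $E_{\rho h}(\bm{a}^i\cdot\bm{q})D_{\rho h}\zeta^\varepsilon_{\kappa,i}=D_{\rho h}w-D_{\rho h}(\bm{\theta}\cdot\bm{q})-\zeta^\varepsilon_{\kappa,i}D_{\rho h}(\bm{a}^i\cdot\bm{q})$, which is exactly the paper's mechanism via Lemma~\ref{fdq-neg-part}. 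The only differences are bookkeeping: the cross terms containing $D_{\rho h}(-w^{-})$ should be bounded using the $1$-Lipschitz property of $s\mapsto -s^{-}$ (so that $|D_{\rho h}(-w^{-})|\le|D_{\rho h}w|$ pointwise, hence $L^2$-bounded uniformly in $h$) rather than ``absorbed into the coercive left-hand side,'' and the paper additionally retains the non-negative quadratic piece $\kappa^{-1}\int|D_{\rho h}(-\varphi w^{-})|^2$ and tracks the $\kappa$-dependence explicitly (via~\eqref{bdd-3} and a second summation by parts) to obtain the estimate~\eqref{conclusion-3} on the negative part of the constraint, which is used later but is not needed for the statement as given.
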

\begin{proof}
Fix $\varphi\in\mathcal{D}(\omega)$ such that $\text{supp }\varphi \subset\subset \omega_1$ and $0\le \varphi \le 1$. Let $\bm{\zeta}^\varepsilon_\kappa \in \bm{V}_K(\omega)$ be the unique solution of Problem~\ref{problem2}.
Observe that each component $\bm{\zeta}^\varepsilon_\kappa$ can be extended outside of $\omega$ by zero.
By Proposition~9.18 of~\cite{Brez11} , the prolongation by zero outside of $\omega$ is the only admissible choice for which the extended vector field is of class $H^1_0(\mathbb{R}^2) \times H^1_0(\mathbb{R}^2) \times H^2_0(\mathbb{R}^2)$. Therefore, it makes sense to consider the vector field
\begin{equation*}
(-\varphi \delta_{\rho h}(\varphi\bm{\zeta}^\varepsilon_\kappa)) \in H^1(\mathbb{R}^2) \times H^1(\mathbb{R}^2) \times H^2(\mathbb{R}^2).
\end{equation*}

Since the support of this vector field is compactly contained in $\omega_1$, we obtain that, actually,
\begin{equation*}
(-\varphi \delta_{\rho h}(\varphi\bm{\zeta}^\varepsilon_\kappa)) \in \bm{V}_K(\omega),
\end{equation*}
and we can specialize $\bm{\eta}=-\varphi \delta_{\rho h}(\varphi\bm{\zeta}^\varepsilon_\kappa)$ in the variational equations of Problem~\ref{problem2}.

Let us now observe that, thanks to H\"older's inequality, the following estimate holds:
\begin{equation}
\label{add-lab-1}
\begin{aligned}
&\int_{\omega} p^{i,\varepsilon} (-\varphi \delta_{\rho h}(\varphi\zeta^\varepsilon_{\kappa,i})) \sqrt{a} \dd y
=-\int_{\omega_1}(\varphi p^{i,\varepsilon}) (\delta_{\rho h}(\varphi\zeta^\varepsilon_{\kappa,i})) \sqrt{a} \dd y\\
&\le\varepsilon \|\varphi\|_{\mathcal{C}^1(\overline{\omega})}\|\bm{p}\|_{L^2(\omega)\times L^2(\omega) \times L^2(\omega)}\sqrt{a_1}
\|D_{\rho h}(\varphi \bm{\zeta}^\varepsilon_\kappa)\|_{H^1(\omega_1)\times H^1(\omega_1)\times H^2(\omega_1)}.
\end{aligned}
\end{equation}

Note in passing that $\|\bm{p}\|_{L^2(\omega)\times L^2(\omega) \times L^2(\omega)}$ is independent of $\varepsilon$ thanks to the assumptions on the data.

Thanks to these inequalities, we have that
\begin{equation}
\label{int-1}
\begin{aligned}
&\varepsilon\int_{\omega_1} a^{\alpha\beta\sigma\tau} \gamma_{\sigma\tau}(\bm{\zeta}^\varepsilon_\kappa) \gamma_{\alpha\beta}(-\varphi \delta_{\rho h}(\varphi\bm{\zeta}^\varepsilon_\kappa)) \sqrt{a} \dd y\\
&\quad+\dfrac{\varepsilon^3}{3}\int_{\omega_1} a^{\alpha\beta\sigma\tau} \rho_{\sigma\tau}(\bm{\zeta}^\varepsilon_\kappa) \rho_{\alpha\beta}(-\varphi \delta_{\rho h}(\varphi\bm{\zeta}^\varepsilon_\kappa)) \sqrt{a} \dd y\\
&\quad+\dfrac{\varepsilon}{\kappa}\int_{\omega_1}\bm{\beta}(\bm{\zeta}^\varepsilon_\kappa) \cdot (-\varphi \delta_{\rho h}(\varphi\bm{\zeta}^\varepsilon_\kappa)) \dd y\\
&\le \varepsilon \|\varphi\|_{\mathcal{C}^1(\overline{\omega})}\|\bm{p}\|_{L^2(\omega)\times L^2(\omega) \times L^2(\omega)}\sqrt{a_1}
\|D_{\rho h}(\varphi \bm{\zeta}^\varepsilon_\kappa)\|_{H^1(\omega_1)\times H^1(\omega_1)\times H^2(\omega_1)}.
\end{aligned}
\end{equation}

Proceeding as in~\cite{Pie-2022-interior}, one can show that~\eqref{upper-bound-solution}, \eqref{add-lab-1} and~\eqref{int-1} lead to:
\begin{equation}
\label{key-relation-2}
\begin{aligned}
&-\varepsilon \int_{\omega_1}a^{\alpha\beta\sigma\tau}\gamma_{\sigma\tau}(\varphi\bm{\zeta}^\varepsilon_\kappa)\gamma_{\alpha\beta}(\delta_{\rho h}(\varphi \bm{\zeta}^\varepsilon_\kappa))\sqrt{a} \dd y\\
&\le -\varepsilon \int_{\omega_1} a^{\alpha\beta\sigma\tau}\gamma_{\sigma\tau}(\bm{\zeta}^\varepsilon_\kappa)\gamma_{\alpha\beta}(\varphi \delta_{\rho h}(\varphi \bm{\zeta}^\varepsilon_\kappa))\sqrt{a} \dd y+\dfrac{C}{\varepsilon^2}(1+\|D_{\rho h}(\varphi \bm{\zeta}^\varepsilon_\kappa)\|_{H^1(\omega_1)\times H^1(\omega_1)\times H^2(\omega_1)}),
\end{aligned}
\end{equation}
for some $C>0$ independent of $\varepsilon$, $\kappa$ and $h$.

The second step of our analysis consists in showing the following estimate:
\begin{equation}
\label{key-relation-3}
\begin{aligned}
&-\dfrac{\varepsilon^3}{3} \int_{\omega_1}a^{\alpha\beta\sigma\tau}\rho_{\sigma\tau}(\varphi\bm{\zeta}^\varepsilon_\kappa)\rho_{\alpha\beta}(\delta_{\rho h}(\varphi \bm{\zeta}^\varepsilon_\kappa))\sqrt{a} \dd y\\
&\le -\dfrac{\varepsilon^3}{3} \int_{\omega_1} a^{\alpha\beta\sigma\tau}\rho_{\sigma\tau}(\bm{\zeta}^\varepsilon_\kappa)\rho_{\alpha\beta}(\varphi \delta_{\rho h}(\varphi \bm{\zeta}^\varepsilon_\kappa))\sqrt{a} \dd y+C\varepsilon(1+\|D_{\rho h}(\varphi \bm{\zeta}^\varepsilon_\kappa)\|_{H^1(\omega_1)\times H^1(\omega_1)\times H^2(\omega_1)}),
\end{aligned}
\end{equation}
for some $C>0$ independent of $\varepsilon$, $\kappa$ and $h$.
In order to show the validity of~\eqref{key-relation-3}, it suffices to observe that:
\begin{equation*}
\begin{aligned}
&\int_{\omega_1} a^{\alpha\beta\sigma\tau} \partial_{\sigma\tau} \zeta^\varepsilon_{\kappa,3} \partial_{\alpha\beta}(\varphi \delta_{\rho h}(\varphi \zeta^\varepsilon_{\kappa,3})) \sqrt{a} \dd y\\
&=\int_{\omega_1} a^{\alpha\beta\sigma\tau} \partial_{\sigma\tau}\zeta^\varepsilon_{\kappa,3} \partial_\alpha((\partial_\beta\varphi) \delta_{\rho h}(\varphi\zeta^\varepsilon_{\kappa,3})+\varphi \partial_\beta(\delta_{\rho h}(\varphi\zeta^\varepsilon_{\kappa,3})))\sqrt{a} \dd y\\
&=\int_{\omega_1} a^{\alpha\beta\sigma\tau} \partial_{\sigma\tau}\zeta^\varepsilon_{\kappa,3} [(\partial_{\alpha\beta}\varphi)\delta_{\rho h}(\varphi\zeta^\varepsilon_{\kappa,3})+(\partial_\beta\varphi)\partial_\alpha(\delta_{\rho h}(\varphi\zeta^\varepsilon_{\kappa,3}))+(\partial_\alpha\varphi) \partial_\beta(\delta_{\rho h}(\varphi\zeta^\varepsilon_{\kappa,3}))+\varphi\partial_{\alpha\beta}(\delta_{\rho h}(\varphi\zeta^\varepsilon_{\kappa,3}))] \sqrt{a} \dd y\\
&\le 3 \left(\max_{\alpha,\beta,\sigma,\tau \in \{1,2\}}\|a^{\alpha\beta\sigma\tau}\|_{\mathcal{C}^0(\overline{\omega})}\right) \sqrt{a_1} \|\zeta^\varepsilon_{\kappa,3}\|_{H^2(\omega)} \|\varphi\|_{\mathcal{C}^2(\overline{\omega})} \|D_{\rho h}(\varphi \zeta^\varepsilon_{\kappa,3})\|_{H^2(\omega_1)}\\
&\quad+\int_{\omega_1}a^{\alpha\beta\sigma\tau} (\partial_{\sigma\tau}\zeta^\varepsilon_{\kappa,3}) [\varphi \partial_{\alpha\beta}(\delta_{\rho h}(\varphi \zeta^\varepsilon_{\kappa,3}))]\sqrt{a} \dd y\\
&=3 \left(\max_{\alpha,\beta,\sigma,\tau \in \{1,2\}}\|a^{\alpha\beta\sigma\tau}\|_{\mathcal{C}^0(\overline{\omega})}\right) \sqrt{a_1} \|\zeta^\varepsilon_{\kappa,3}\|_{H^2(\omega)} \|\varphi\|_{\mathcal{C}^2(\overline{\omega})} \|D_{\rho h}(\varphi \zeta^\varepsilon_{\kappa,3})\|_{H^2(\omega_1)}\\
&\quad+\int_{\omega_1} a^{\alpha\beta\sigma\tau} [\partial_\sigma(\varphi \partial_\tau \zeta^\varepsilon_{\kappa,3})-(\partial_\sigma \varphi) (\partial_\tau \zeta^\varepsilon_{\kappa,3})] \partial_{\alpha\beta}(\delta_{\rho h}(\varphi \zeta^\varepsilon_{\kappa,3})) \sqrt{a} \dd y\\
&=3 \left(\max_{\alpha,\beta,\sigma,\tau \in \{1,2\}}\|a^{\alpha\beta\sigma\tau}\|_{\mathcal{C}^0(\overline{\omega})}\right) \sqrt{a_1} \|\zeta^\varepsilon_{\kappa,3}\|_{H^2(\omega)} \|\varphi\|_{\mathcal{C}^2(\overline{\omega})} \|D_{\rho h}(\varphi \zeta^\varepsilon_{\kappa,3})\|_{H^2(\omega_1)}\\
&\quad+\int_{\omega_1} a^{\alpha\beta\sigma\tau} \partial_{\sigma\tau}(\varphi\zeta^\varepsilon_{\kappa,3}) \partial_{\alpha\beta}(\delta_{\rho h}(\varphi\zeta^\varepsilon_{\kappa,3})) \sqrt{a} \dd y\\
&\quad-\int_{\omega_1}a^{\alpha\beta\sigma\tau} [(\partial_{\sigma\tau}\varphi)\zeta^\varepsilon_{\kappa,3}+(\partial_\tau \varphi)(\partial_\sigma \zeta^\varepsilon_{\kappa,3})+(\partial_\sigma \varphi)(\partial_\tau\zeta^\varepsilon_{\kappa,3})] \partial_{\alpha\beta}(\delta_{\rho h}(\varphi \zeta^\varepsilon_{\kappa,3})) \sqrt{a} \dd y\\
&=3 \left(\max_{\alpha,\beta,\sigma,\tau \in \{1,2\}}\|a^{\alpha\beta\sigma\tau}\|_{\mathcal{C}^0(\overline{\omega})}\right) \sqrt{a_1} \|\zeta^\varepsilon_{\kappa,3}\|_{H^2(\omega)} \|\varphi\|_{\mathcal{C}^2(\overline{\omega})} \|D_{\rho h}(\varphi \zeta^\varepsilon_{\kappa,3})\|_{H^2(\omega_1)}\\
&\quad+\int_{\omega_1} a^{\alpha\beta\sigma\tau} \partial_{\sigma\tau}(\varphi\zeta^\varepsilon_{\kappa,3}) \partial_{\alpha\beta}(\delta_{\rho h}(\varphi\zeta^\varepsilon_{\kappa,3})) \sqrt{a} \dd y\\
&\quad+\int_{\omega_1}\partial_\alpha(a^{\alpha\beta\sigma\tau} [(\partial_{\sigma\tau}\varphi)\zeta^\varepsilon_{\kappa,3}+(\partial_\tau \varphi)(\partial_\sigma \zeta^\varepsilon_{\kappa,3})+(\partial_\sigma \varphi)(\partial_\tau\zeta^\varepsilon_{\kappa,3})] \sqrt{a})\partial_\beta(\delta_{\rho h}(\varphi \zeta^\varepsilon_{\kappa,3})) \dd y\\
& \le 6 \left(\max_{\alpha,\beta,\sigma,\tau \in \{1,2\}}\|a^{\alpha\beta\sigma\tau}\|_{\mathcal{C}^1(\overline{\omega})}\right) \|\sqrt{a}\|_{\mathcal{C}^1(\overline{\omega})} \|\zeta^\varepsilon_{\kappa,3}\|_{H^2(\omega)} \|\varphi\|_{\mathcal{C}^3(\overline{\omega})} \|D_{\rho h}(\varphi \zeta^\varepsilon_{\kappa,3})\|_{H^2(\omega_1)}\\
&\quad+\int_{\omega_1} a^{\alpha\beta\sigma\tau} \partial_{\sigma\tau}(\varphi\zeta^\varepsilon_{\kappa,3}) \partial_{\alpha\beta}(\delta_{\rho h}(\varphi\zeta^\varepsilon_{\kappa,3})) \sqrt{a} \dd y.
\end{aligned}
\end{equation*}

The other multiplications in the product of two arbitrary covariant components of the change of curvature tensors are estimated in a similar manner, and the inequality~\eqref{key-relation-3} is thus proved.

The fact that the second addend in the right-hand side of~\eqref{key-relation-3} is of order $\mathcal{O}(\varepsilon)$ descends from~\eqref{upper-bound-solution}.

We then have that the fact that $\varphi$ has compact support in $\omega_1$, Korn's inequality (Theorem~\ref{korn}), estimates~\eqref{add-lab-1}, \eqref{int-1} and~\eqref{key-relation-2}, the integration-by-parts formula for finite difference quotients (cf., e.g., formula~(10) on page~293 of~\cite{Evans2010}), and the definition of $d$ (viz. \eqref{d}) give
\begin{equation*}
\begin{aligned}
&\dfrac{\varepsilon^3\sqrt{a_0}}{3 c_0^2 c_e}\|D_{\rho h}(\varphi \bm{\zeta}^\varepsilon_\kappa)\|_{H^1(\omega_1)\times H^1(\omega_1)\times H^2(\omega_1)}^2
+\dfrac{\varepsilon}{\kappa}\int_{\omega_1}\bm{\beta}(\bm{\zeta}^\varepsilon_\kappa) \cdot (-\varphi \delta_{\rho h}(\varphi\bm{\zeta}^\varepsilon_\kappa)) \dd y\\
&\le\varepsilon \int_{\omega_1} a^{\alpha\beta\sigma\tau} \gamma_{\sigma\tau}(D_{\rho h}(\varphi\bm{\zeta}^\varepsilon_\kappa)) \gamma_{\alpha\beta}(D_{\rho h}(\varphi\bm{\zeta}^\varepsilon_\kappa)) \sqrt{a} \dd y\\
&\quad+\dfrac{\varepsilon^3}{3} \int_{\omega_1} a^{\alpha\beta\sigma\tau} \rho_{\sigma\tau}(D_{\rho h}(\varphi\bm{\zeta}^\varepsilon_\kappa)) \rho_{\alpha\beta}(D_{\rho h}(\varphi\bm{\zeta}^\varepsilon_\kappa)) \sqrt{a} \dd y
+\dfrac{\varepsilon}{\kappa}\int_{\omega_1}\bm{\beta}(\bm{\zeta}^\varepsilon_\kappa) \cdot (-\varphi \delta_{\rho h}(\varphi\bm{\zeta}^\varepsilon_\kappa)) \dd y\\
&\le \dfrac{C}{\varepsilon^2}(1+\|D_{\rho h}(\varphi\bm{\zeta}^\varepsilon_\kappa)\|_{H^1(\omega_1)\times H^1(\omega_1)\times H^2(\omega_1)})
-\varepsilon \int_{\omega_1} D_{\rho h}(a^{\alpha\beta\sigma\tau}\sqrt{a}) E_{\rho h}\left(\gamma_{\sigma\tau}(\varphi\bm{\zeta}^\varepsilon_\kappa)\right) \gamma_{\alpha\beta}(D_{\rho h}(\varphi\bm{\zeta}^\varepsilon_\kappa)) \dd y\\
&\quad-\dfrac{\varepsilon^3}{3} \int_{\omega_1} D_{\rho h}(a^{\alpha\beta\sigma\tau}\sqrt{a}) E_{\rho h}\left(\rho_{\sigma\tau}(\varphi\bm{\zeta}^\varepsilon_\kappa)\right) \rho_{\alpha\beta}(D_{\rho h}(\varphi\bm{\zeta}^\varepsilon_\kappa)) \dd y\\
&\le \dfrac{C}{\varepsilon^2}(1+\|D_{\rho h}(\varphi\bm{\zeta}^\varepsilon_\kappa)\|_{H^1(\omega_1)\times H^1(\omega_1)\times H^2(\omega_1)})\\
&\quad+\varepsilon \left(\max_{\alpha,\beta,\sigma,\tau \in \{1,2\}}\{\|a^{\alpha\beta\sigma\tau}\sqrt{a}\|_{\mathcal{C}^1(\overline{\omega})}\}\right) \left(\max_{\alpha,\beta\in\{1,2\}}\|\gamma_{\alpha\beta}(D_{\rho h}(\varphi\bm{\zeta}^\varepsilon_\kappa)\|_{L^2(\omega_1)}\right) \left(\max_{\sigma,\tau\in\{1,2\}}\|E_{\rho h}(\gamma_{\sigma\tau}(\varphi \bm{\zeta}^\varepsilon_\kappa))\|_{L^2(\omega_1)}\right)\\
&\quad+\dfrac{\varepsilon^3}{3} \left(\max_{\alpha,\beta,\sigma,\tau \in \{1,2\}}\{\|a^{\alpha\beta\sigma\tau}\sqrt{a}\|_{\mathcal{C}^1(\overline{\omega})}\}\right) \left(\max_{\alpha,\beta\in\{1,2\}}\|\rho_{\alpha\beta}(D_{\rho h}(\varphi\bm{\zeta}^\varepsilon_\kappa)\|_{L^2(\omega_1)}\right) \left(\max_{\sigma,\tau\in\{1,2\}}\|E_{\rho h}(\rho_{\sigma\tau}(\varphi \bm{\zeta}^\varepsilon_\kappa))\|_{L^2(\omega_1)}\right)\\
&= \dfrac{C}{\varepsilon^2}(1+\|D_{\rho h}(\varphi\bm{\zeta}^\varepsilon_\kappa)\|_{H^1(\omega_1)\times H^1(\omega_1)\times H^2(\omega_1)})\\
&\quad+\varepsilon \left(\max_{\alpha,\beta,\sigma,\tau \in \{1,2\}}\{\|a^{\alpha\beta\sigma\tau}\sqrt{a}\|_{\mathcal{C}^1(\overline{\omega})}\}\right) \left(\max_{\alpha,\beta\in\{1,2\}}\|\gamma_{\alpha\beta}(D_{\rho h}(\varphi\bm{\zeta}^\varepsilon_\kappa)\|_{L^2(\omega_1)}\right) \left(\max_{\sigma,\tau\in\{1,2\}}\|\gamma_{\sigma\tau}(\varphi \bm{\zeta}^\varepsilon_\kappa)\|_{L^2(\omega_0)}\right)\\
&\quad+\dfrac{\varepsilon^3}{3} \left(\max_{\alpha,\beta,\sigma,\tau \in \{1,2\}}\{\|a^{\alpha\beta\sigma\tau}\sqrt{a}\|_{\mathcal{C}^1(\overline{\omega})}\}\right) \left(\max_{\alpha,\beta\in\{1,2\}}\|\rho_{\alpha\beta}(D_{\rho h}(\varphi\bm{\zeta}^\varepsilon_\kappa)\|_{L^2(\omega_1)}\right) \left(\max_{\sigma,\tau\in\{1,2\}}\|\rho_{\sigma\tau}(\varphi \bm{\zeta}^\varepsilon_\kappa)\|_{L^2(\omega_0)}\right)\\
&= \dfrac{C}{\varepsilon^2}(1+\|D_{\rho h}(\varphi\bm{\zeta}^\varepsilon_\kappa)\|_{H^1(\omega_1)\times H^1(\omega_1)\times H^2(\omega_1)})\\
&\quad+2C\varepsilon^3 \left(\max_{\alpha,\beta,\sigma,\tau \in \{1,2\}}\{\|a^{\alpha\beta\sigma\tau}\sqrt{a}\|_{\mathcal{C}^1(\overline{\omega})}\}\right) 
\|D_{\rho h}(\varphi\bm{\zeta}^\varepsilon_\kappa)\|_{H^1(\omega_1)\times H^1(\omega_1)\times H^2(\omega_1)}
\|\bm{\zeta}^\varepsilon_\kappa\|_{H^1(\omega)\times H^1(\omega)\times H^2(\omega)}\\
&\le \dfrac{C}{\varepsilon^2}(1+\|D_{\rho h}(\varphi\bm{\zeta}^\varepsilon_\kappa)\|_{H^1(\omega_1)\times H^1(\omega_1)\times H^2(\omega_1)}),
\end{aligned}
\end{equation*}
where, once again, the constant $C>0$ is independent of $\varepsilon$, $\kappa$ and $h$. The latter computations summarize in the following result
\begin{equation}
\label{checkpoint-1}
\begin{aligned}
&\dfrac{\varepsilon^3\sqrt{a_0}}{3 c_0^2 c_e}\|D_{\rho h}(\varphi \bm{\zeta}^\varepsilon_\kappa)\|_{H^1(\omega_1)\times H^1(\omega_1)\times H^2(\omega_1)}^2
+\dfrac{\varepsilon}{\kappa}\int_{\omega_1}\bm{\beta}(\bm{\zeta}^\varepsilon_\kappa) \cdot (-\varphi \delta_{\rho h}(\varphi\bm{\zeta}^\varepsilon_\kappa)) \dd y\\
&\le \dfrac{C}{\varepsilon^2}(1+\|D_{\rho h}(\varphi\bm{\zeta}^\varepsilon_\kappa)\|_{H^1(\omega_1)\times H^1(\omega_1)\times H^2(\omega_1)}),
\end{aligned}
\end{equation}
for some constant $C>0$ is independent of $\varepsilon$, $\kappa$ and $h$.

Let us now estimate the penalty term.  By~\eqref{beta-2.5}, we obtain that:
\begin{equation}
\label{bdd-3}
\dfrac{1}{\sqrt{\kappa}}\|-\{(\bm{\theta}+\zeta^\varepsilon_{\kappa,i}\bm{a}^i)\cdot\bm{q}\}^{-}\|_{\bm{L}^2(\omega)} \le \dfrac{M_1}{\varepsilon},
\end{equation}
for some $M_1>0$ independent of $\varepsilon$ and $\kappa$.

Let us now evaluate the penalty term in the governing equations of Problem~\ref{problem2}. An application of formulas~\eqref{D+}, \eqref{D-}, \eqref{delta+} and Lemma~\ref{fdq-neg-part} gives:
\begin{align*}
&\dfrac{1}{\kappa}\int_{\omega_1}\bm{\beta}(\bm{\zeta}^\varepsilon_\kappa) \cdot (-\varphi \delta_{\rho h}(\varphi \bm{\zeta}^\varepsilon_\kappa))\dd y =-\dfrac{1}{\kappa}\int_{\omega_1}\left[-\varphi\{(\bm{\theta}+\zeta^\varepsilon_{\kappa,j}\bm{a}^j)\cdot\bm{q}\}^{-}\left(\dfrac{\bm{a}^i\cdot\bm{q}}{\sqrt{\sum_{\ell=1}^3|\bm{a}^\ell\cdot\bm{q}|^2}}\right)\right]\delta_{\rho h}(\varphi\zeta^\varepsilon_{\kappa,i})\dd y\\
&=\dfrac{1}{\kappa}\int_{\omega_1}D_{\rho h}\left(-\varphi\{(\bm{\theta}+\zeta^\varepsilon_{\kappa,j}\bm{a}^j)\cdot\bm{q}\}^{-} \left(\dfrac{\bm{a}^i\cdot\bm{q}}{\sqrt{\sum_{\ell=1}^3|\bm{a}^\ell\cdot\bm{q}|^2}}\right)\right) D_{\rho h}(\varphi\zeta^\varepsilon_{\kappa,i})\dd y\\
&=\dfrac{1}{\kappa}\int_{\omega_1}\left[D_{\rho h}\left(-\{(\bm{\theta}+\zeta^\varepsilon_{\kappa,j}\bm{a}^j)\cdot\bm{q}\}^{-} \varphi\right) E_{\rho h}\left(\dfrac{\bm{a}^i\cdot\bm{q}}{\sqrt{\sum_{\ell=1}^3|\bm{a}^\ell\cdot\bm{q}|^2}}\right)\right] D_{\rho h}(\varphi\zeta^\varepsilon_{\kappa,i})\dd y\\
&\quad+\dfrac{1}{\kappa}\int_{\omega_1} \left[\left(-\{(\bm{\theta}+\zeta^\varepsilon_{\kappa,j}\bm{a}^j)\cdot\bm{q}\}^{-} \varphi\right) D_{\rho h}\left(\dfrac{\bm{a}^i\cdot\bm{q}}{\sqrt{\sum_{\ell=1}^3|\bm{a}^\ell\cdot\bm{q}|^2}}\right)\right] D_{\rho h}(\varphi\zeta^\varepsilon_{\kappa,i})\dd y\\
&=\dfrac{1}{\kappa}\int_{\omega_1}E_{\rho h}\left(\dfrac{1}{\sqrt{\sum_{\ell=1}^3|\bm{a}^\ell\cdot\bm{q}|^2}}\right)\left[D_{\rho h}\left(-\{(\bm{\theta}+\zeta^\varepsilon_{\kappa,j}\bm{a}^j)\cdot\bm{q}\}^{-} \varphi\right)\right] D_{\rho h}\left(\varphi \zeta^\varepsilon_{\kappa,i}\bm{a}^i\cdot\bm{q}\right)\dd y\\
&\quad-\dfrac{1}{\kappa}\int_{\omega_1}E_{\rho h}\left(\dfrac{1}{\sqrt{\sum_{\ell=1}^3|\bm{a}^\ell\cdot\bm{q}|^2}}\right)\left[D_{\rho h}\left(-\{(\bm{\theta}+\zeta^\varepsilon_{\kappa,j}\bm{a}^j)\cdot\bm{q}\}^{-} \varphi\right)\right] (\varphi \zeta^\varepsilon_{\kappa,i}) D_{\rho h}\left(\bm{a}^i\cdot\bm{q}\right)\dd y\\
&\quad+\dfrac{1}{\kappa}\int_{\omega_1} \left(-\{(\bm{\theta}+\zeta^\varepsilon_{\kappa,j}\bm{a}^j)\cdot\bm{q}\}^{-} \varphi\right)\left(D_{\rho h}\left(\dfrac{\bm{a}^i\cdot\bm{q}}{\sqrt{\sum_{\ell=1}^3|\bm{a}^\ell\cdot\bm{q}|^2}}\right) D_{\rho h}(\varphi\zeta^\varepsilon_{\kappa,i})\right)\dd y\\
&=\dfrac{1}{\kappa}\int_{\omega_1}E_{\rho h}\left(\dfrac{1}{\sqrt{\sum_{\ell=1}^3|\bm{a}^\ell\cdot\bm{q}|^2}}\right)\left[D_{\rho h}\left(-\{(\bm{\theta}+\zeta^\varepsilon_{\kappa,j}\bm{a}^j)\cdot\bm{q}\}^{-} \varphi\right)\right] D_{\rho h}\left(\varphi (\bm{\theta}+\zeta^\varepsilon_{\kappa,i}\bm{a}^i)\cdot\bm{q}\right)\dd y\\
&\quad-\dfrac{1}{\kappa}\int_{\omega_1}E_{\rho h}\left(\dfrac{1}{\sqrt{\sum_{\ell=1}^3|\bm{a}^\ell\cdot\bm{q}|^2}}\right)\left[D_{\rho h}\left(-\{(\bm{\theta}+\zeta^\varepsilon_{\kappa,j}\bm{a}^j)\cdot\bm{q}\}^{-} \varphi\right)\right] D_{\rho h}\left(\varphi \bm{\theta}\cdot\bm{q}\right)\dd y\\
&\quad+\dfrac{1}{\kappa}\int_{\omega_1}\left(-\{(\bm{\theta}+\zeta^\varepsilon_{\kappa,j}\bm{a}^j)\cdot\bm{q}\}^{-} \varphi\right) D_{-\rho h}\left(E_{\rho h}\left(\dfrac{1}{\sqrt{\sum_{\ell=1}^3|\bm{a}^\ell\cdot\bm{q}|^2}}\right)(\varphi \zeta^\varepsilon_{\kappa,i}) D_{\rho h}\left(\bm{a}^i\cdot\bm{q}\right)\right)\dd y\\
&\quad+\dfrac{1}{\kappa}\int_{\omega_1} \left(-\{(\bm{\theta}+\zeta^\varepsilon_{\kappa,j}\bm{a}^j)\cdot\bm{q}\}^{-} \varphi\right)\left(D_{\rho h}\left(\dfrac{\bm{a}^i\cdot\bm{q}}{\sqrt{\sum_{\ell=1}^3|\bm{a}^\ell\cdot\bm{q}|^2}}\right) D_{\rho h}(\varphi\zeta^\varepsilon_{\kappa,i})\right)\dd y\\
&=\dfrac{1}{\kappa}\int_{\omega_1} E_{\rho h}\left(\dfrac{1}{\sqrt{\sum_{\ell=1}^3|\bm{a}^\ell\cdot\bm{q}|^2}}\right)\left[D_{\rho h}\left(-\{(\bm{\theta}+\zeta^\varepsilon_{\kappa,j}\bm{a}^j)\cdot\bm{q}\}^{-} \varphi\right)\right] D_{\rho h}\left(\varphi \{(\bm{\theta}+\zeta^\varepsilon_{\kappa,i}\bm{a}^i)\cdot\bm{q}\}^{+}\right)\dd y\\
&\quad+\dfrac{1}{\kappa}\int_{\omega_1} E_{\rho h}\left(\dfrac{1}{\sqrt{\sum_{\ell=1}^3|\bm{a}^\ell\cdot\bm{q}|^2}}\right)\left|D_{\rho h}\left(-\{(\bm{\theta}+\zeta^\varepsilon_{\kappa,j}\bm{a}^j)\cdot\bm{q}\}^{-} \varphi\right)\right|^2\dd y\\
&\quad+\dfrac{1}{\kappa}\int_{\omega_1}(-\varphi \{(\bm{\theta}+\zeta^\varepsilon_{\kappa,j}\bm{a}^j)\cdot\bm{q}\}^{-}) D_{-\rho h}\left[E_{\rho h}\left(\dfrac{1}{\sqrt{\sum_{\ell=1}^3|\bm{a}^\ell\cdot\bm{q}|^2}}\right) D_{\rho h}(\varphi \bm{\theta}\cdot\bm{q})\right] \dd y\\
&\quad+\dfrac{1}{\kappa}\int_{\omega_1}(-\varphi \{(\bm{\theta}+\zeta^\varepsilon_{\kappa,j}\bm{a}^j)\cdot\bm{q}\}^{-}) D_{-\rho h}\left[E_{\rho h}\left(\dfrac{1}{\sqrt{\sum_{\ell=1}^3|\bm{a}^\ell\cdot\bm{q}|^2}}\right) \left(D_{\rho h}(\bm{a}^i\cdot\bm{q})\right) (\varphi\zeta^\varepsilon_{\kappa,i})\right] \dd y\\
&\quad+\dfrac{1}{\kappa}\int_{\omega_1} \left(-\{(\bm{\theta}+\zeta^\varepsilon_{\kappa,j}\bm{a}^j)\cdot\bm{q}\}^{-} \varphi\right)\left(D_{\rho h}\left(\dfrac{\bm{a}^i\cdot\bm{q}}{\sqrt{\sum_{\ell=1}^3|\bm{a}^\ell\cdot\bm{q}|^2}}\right) D_{\rho h}(\varphi\zeta^\varepsilon_{\kappa,i})\right)\dd y.
\end{align*}

Applying the latter computations, the fact that $\bm{\theta}\in\mathcal{C}^3(\overline{\omega};\mathbb{E}^3)$, Lemma~\ref{fdq-neg-part}, the assumption according to which $\min_{y \in \overline{\omega}}(\bm{a}^3\cdot\bm{q})>0$ and the fact that $\textup{supp }\varphi \subset\subset \omega_1$ to~\eqref{checkpoint-1} gives:
\begin{align*}
	&\dfrac{\varepsilon^3\sqrt{a_0}}{3 c_0^2 c_e}\|D_{\rho h}(\varphi \bm{\zeta}^\varepsilon_\kappa)\|_{H^1(\omega_1)\times H^1(\omega_1)\times H^2(\omega_1)}^2\\
	&\quad+\varepsilon\dfrac{\left(3\max\{\|\bm{a}^\ell\cdot\bm{q}\|_{\mathcal{C}^0(\overline{\omega})}^2;1\le \ell \le 3\}\right)^{-1/2}}{\kappa}\int_{\omega_1}\left|D_{\rho h}\left(-\{(\bm{\theta}+\zeta^\varepsilon_{\kappa,j}\bm{a}^j)\cdot\bm{q}\}^{-} \varphi\right)\right|^2\dd y\\
	&\le \dfrac{C}{\varepsilon^2}(1+\|D_{\rho h}(\varphi\bm{\zeta}^\varepsilon_\kappa)\|_{H^1(\omega_1)\times H^1(\omega_1)\times H^2(\omega_1)})\\
	&\quad-\dfrac{\varepsilon}{\kappa}\int_{\omega_1}(-\varphi \{(\bm{\theta}+\zeta^\varepsilon_{\kappa,j}\bm{a}^j)\cdot\bm{q}\}^{-}) D_{-\rho h}\left[E_{\rho h}\left(\dfrac{1}{\sqrt{\sum_{\ell=1}^3|\bm{a}^\ell\cdot\bm{q}|^2}}\right) D_{\rho h}(\varphi \bm{\theta}\cdot\bm{q})\right] \dd y\\
	&\quad-\dfrac{\varepsilon}{\kappa}\int_{\omega_1}(-\varphi \{(\bm{\theta}+\zeta^\varepsilon_{\kappa,j}\bm{a}^j)\cdot\bm{q}\}^{-}) D_{-\rho h}\left[E_{\rho h}\left(\dfrac{1}{\sqrt{\sum_{\ell=1}^3|\bm{a}^\ell\cdot\bm{q}|^2}}\right) \left(D_{\rho h}(\bm{a}^i\cdot\bm{q})\right) (\varphi\zeta^\varepsilon_{\kappa,i})\right] \dd y\\
	&\quad-\dfrac{\varepsilon}{\kappa}\int_{\omega_1} \left(-\{(\bm{\theta}+\zeta^\varepsilon_{\kappa,j}\bm{a}^j)\cdot\bm{q}\}^{-} \varphi\right)\left(D_{\rho h}\left(\dfrac{\bm{a}^i\cdot\bm{q}}{\sqrt{\sum_{\ell=1}^3|\bm{a}^\ell\cdot\bm{q}|^2}}\right) D_{\rho h}(\varphi\zeta^\varepsilon_{\kappa,i})\right)\dd y,
\end{align*}
for some constant $C>0$ independent of $\varepsilon$, $\kappa$ and $h$. 

An application of~\eqref{bdd-3}, H\"older's inequality, and the assumption $\min_{y\in\overline{\omega}}(\bm{a}^3(y)\cdot\bm{q})>0$ gives the following estimate:
\begin{equation}
\label{checkpoint-2}
\begin{aligned}
&\dfrac{\varepsilon^3\sqrt{a_0}}{3 c_0^2 c_e}\|D_{\rho h}(\varphi \bm{\zeta}^\varepsilon_\kappa)\|_{H^1(\omega_1)\times H^1(\omega_1)\times H^2(\omega_1)}^2\\
&\quad+\varepsilon\dfrac{\left(3\max\{\|\bm{a}^\ell\cdot\bm{q}\|_{\mathcal{C}^0(\overline{\omega})}^2;1\le \ell \le 3\}\right)^{-1/2}}{\kappa}\int_{\omega_1}\left|D_{\rho h}\left(-\{(\bm{\theta}+\zeta^\varepsilon_{\kappa,j}\bm{a}^j)\cdot\bm{q}\}^{-} \varphi\right)\right|^2\dd y\\
&\le \dfrac{C}{\varepsilon^2\sqrt{\kappa}}\left(1+\|D_{\rho h}(\varphi\bm{\zeta}^\varepsilon_\kappa)\|_{H^1(\omega_1)\times H^1(\omega_1)\times H^2(\omega_1)}\right).
\end{aligned}
\end{equation}

A direct consequence of~\eqref{checkpoint-2} is that:
\begin{equation}
\label{conclusion-1}
\dfrac{\sqrt{a_0}}{3 c_0^2 c_e}\|D_{\rho h}(\varphi \bm{\zeta}^\varepsilon_\kappa)\|_{H^1(\omega_1)\times H^1(\omega_1)\times H^2(\omega_1)}^2
-\dfrac{C}{\varepsilon^{5} \sqrt{\kappa}}\|D_{\rho h}(\varphi\bm{\zeta}^\varepsilon_\kappa)\|_{H^1(\omega_1)\times H^1(\omega_1)\times L^2(\omega_1)}-\dfrac{C}{\varepsilon^{5} \sqrt{\kappa}}\le 0.
\end{equation}

Regarding $\|D_{\rho h}(\varphi \bm{\zeta}^\varepsilon_\kappa)\|_{H^1(\omega_1)\times H^1(\omega_1)\times H^2(\omega_1)}$ as the variable of the corresponding second-degree polynomial $\frac{\sqrt{a_0}}{3 c_0^2 c_e} x^2 -\frac{C}{\varepsilon^{5} \sqrt{\kappa}} x -\frac{C}{\varepsilon^{5} \sqrt{\kappa}}$, we have that its discriminant is positive. Therefore, we have that the inequality~\eqref{conclusion-1} is satisfied for
\begin{equation}
	\label{conclusion-2}
0\le \|D_{\rho h}(\varphi \bm{\zeta}^\varepsilon_\kappa)\|_{H^1(\omega_1)\times H^1(\omega_1)\times H^2(\omega_1)} \le \dfrac{\dfrac{C}{\varepsilon^{5} \sqrt{\kappa}}+\sqrt{\dfrac{C^2}{\varepsilon^{10} \kappa}+4\frac{C\sqrt{a_0}}{3 c_0^2 c_e \varepsilon^{5} \sqrt{\kappa}}}}{\frac{2 \sqrt{a_0}}{3 c_0^2 c_e}},
\end{equation}
where the upper bound in~\eqref{conclusion-2} is independent of $h$. 
Applying~\eqref{conclusion-2} to~\eqref{checkpoint-2} gives that
\begin{equation*}
\label{conclusion-3}
\left\|D_{\rho h}\left(-\{(\bm{\theta}+\zeta^\varepsilon_{\kappa,j}\bm{a}^j)\cdot\bm{q}\}^{-} \varphi\right)\right\|_{L^2(\omega_1)}^2 \le \dfrac{C}{\varepsilon^8},
\end{equation*}
for some $C>0$ independent of $\varepsilon$, $\kappa$ and $h$, thus showing that the penalisation term is more regular in the interior of the domain $\omega$ for a given $\kappa$ and $\varepsilon$.
This completes the proof.
\end{proof}

As a remark, we observe that the higher regularity of the negative part of the constraint has been established without resorting by any means to Stampacchia's theorem~\cite{Stampacchia1965}.

Let us now show that the solution $\bm{\zeta}^\varepsilon_\kappa$ of Problem~\ref{problem2} enjoys the higher regularity established in Theorem~\ref{aug:int} up to the boundary of the domain $\omega$.

\begin{theorem}
\label{aug:bdry}
Assume that the boundary $\gamma$ of the domain $\omega$ is of class $\mathcal{C}^4$ and that $\bm{\theta} \in \mathcal{C}^4(\overline{\omega};\mathbb{E}^3)$.
Assume that there exists a unit-norm vector $\bm{q} \in \mathbb{E}^3$ such that
\begin{equation*}
\min_{y \in \overline{\omega}} (\bm{\theta} (y) \cdot \bm{q}) > 0 \quad
\textup{ and } \quad
\min_{y \in \overline{\omega}} (\bm{a}_3 (y) \cdot \bm{q}) > 0.
\end{equation*}

Assume also that the vector field $\bm{f}^\varepsilon=(f^{i,\varepsilon})$ defining the applied body force density is such that $\bm{p}^\varepsilon=(p^{i,\varepsilon}) \in H^1(\omega) \times H^1(\omega) \times L^2(\omega)$. Define $\bm{H}(\omega):=H^2(\omega) \times H^2(\omega) \times H^3(\omega)$.

Then, the solution $\bm{\zeta}^\varepsilon_\kappa=(\zeta^\varepsilon_{\kappa,i})$ of Problem~\ref{problem2} is of class $\bm{V}_K(\omega)\cap \bm{H}(\omega)$.
\end{theorem}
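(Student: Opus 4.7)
The strategy is to combine the interior finite-difference-quotient argument of Theorem~\ref{aug:int} with a standard boundary-flattening procedure, and then to complete the regularity gain in the normal direction by inverting the top-order part of the Euler--Lagrange system displayed at the end of section~\ref{sec:penalty}. Since $\gamma\in\mathcal{C}^2$, I would first cover a tubular neighbourhood of $\gamma$ by finitely many local charts in which $\gamma$ becomes a piece of the line $\{y_2=0\}$ via a $\mathcal{C}^2$-diffeomorphism. The assumption $\bm{\theta}\in\mathcal{C}^4(\overline{\omega};\mathbb{E}^3)$ ensures that, in the new coordinates, all the geometric quantities $a^{\alpha\beta\sigma\tau}$, $\Gamma^\sigma_{\alpha\beta}$, $b_{\alpha\beta}$, $\sqrt{a}$ appearing in $\gamma_{\alpha\beta}(\bm{\eta})$ and $\rho_{\alpha\beta}(\bm{\eta})$ retain the $\mathcal{C}^1$ and $\mathcal{C}^2$ regularity that was consumed by the interior argument. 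Using a smooth partition of unity subordinate to this covering, together with Theorem~\ref{aug:int}, reduces the problem to a higher-regularity estimate in a half-disk $\omega^+_r := B_r(0)\cap\{y_2>0\}$ whose flat piece of boundary carries the homogeneous conditions $\zeta^\varepsilon_{\kappa,i}=\partial_\nu\zeta^\varepsilon_{\kappa,3}=0$.

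Inside each such flattened chart I would pick a cutoff $\varphi\in\mathcal{D}(B_r(0))$ and test Problem~\ref{problem2} against the \emph{purely tangential} difference-quotient test function $-\varphi\,\delta_{1h}(\varphi\bm{\zeta}^\varepsilon_\kappa)$, with the increment taken only in the direction $\bm{e}_1$ tangent to the flattened boundary. Because Dirichlet-type boundary conditions are preserved under tangential translation, this test function still lies in $\bm{V}_K(\omega)$ for all sufficiently small $h$, so every manipulation carried out in the proof of Theorem~\ref{aug:int}---the commutator splittings leading to~\eqref{key-relation-3} and~\eqref{checkpoint-2}, the identities~\eqref{D+}--\eqref{delta+}, Lemma~\ref{fdq-neg-part}, the monotonicity of $\bm{\beta}$, and the treatment of the boundary term $-\{(\bm{\theta}+\zeta^\varepsilon_{\kappa,j}\bm{a}^j)\cdot\bm{q}\}^{-}\varphi$---goes through with $\omega_1$ replaced by $\omega^+_r$. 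I would obtain the analogue of~\eqref{conclusion-2}--\eqref{conclusion-3}, this time uniform in $h$ near the boundary. Letting $h\to 0$ and invoking Theorem~3 of Section~5.8.2 of~\cite{Evans2010} then promotes one tangential derivative of the tangential components into $H^1$ and one tangential derivative of the transverse component into $H^2$ \emph{up to the flattened boundary}.

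What remains is to convert this tangential regularity into full $H^2\times H^2\times H^3$ regularity up to $\gamma$ by reading the missing normal derivatives off the boundary-value system displayed at the end of section~\ref{sec:penalty}. In the flattened coordinates the two tangential momentum equations furnish a $2\times 2$ linear system for $(\partial_{22}\zeta^\varepsilon_{\kappa,1},\partial_{22}\zeta^\varepsilon_{\kappa,2})$, whose coefficient matrix is a uniformly positive-definite block of $(a^{\alpha\beta\sigma\tau})$ by Theorem~3.3-2 of~\cite{Ciarlet2000}; the right-hand side involves only $p^{\alpha,\varepsilon}\in L^2$, $\beta_\alpha(\bm{\zeta}^\varepsilon_\kappa)\in L^2$, the mixed derivatives $\partial_{12}\zeta^\varepsilon_{\kappa,\alpha}\in L^2$ and $\partial_{11}\zeta^\varepsilon_{\kappa,\alpha}\in L^2$ (all supplied by the tangential bound), together with the lower-order coupling with $\zeta^\varepsilon_{\kappa,3}\in H^2$. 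Inverting it pointwise yields $\zeta^\varepsilon_{\kappa,\alpha}\in H^2$ up to $\gamma$. For the transverse component, the flexural equation contains $\tfrac{\varepsilon^3}{3}a^{2222}\partial_{2222}\zeta^\varepsilon_{\kappa,3}$ as its only fourth-order term that is not already controlled by the tangential bound, so I would solve it for $\partial_{2222}\zeta^\varepsilon_{\kappa,3}$ against a right-hand side involving $p^{3,\varepsilon}$, $n^{\alpha\beta,\varepsilon}_\kappa$, $\beta_3(\bm{\zeta}^\varepsilon_\kappa)$, and the tangential third-order derivatives of $\zeta^\varepsilon_{\kappa,3}$, then integrate once in $y_2$ against the homogeneous boundary data to recover $\partial_{222}\zeta^\varepsilon_{\kappa,3}\in L^2$. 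This is precisely the step that consumes the hypothesis $p^{3,\varepsilon}\in H^1(\omega)$. Patching the resulting boundary estimates through the partition of unity with the interior estimate of Theorem~\ref{aug:int} delivers $\bm{\zeta}^\varepsilon_\kappa\in\bm{V}_K(\omega)\cap\bm{H}(\omega)$.

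The main obstacle, in my view, is neither the difference-quotient mechanics (modelled on Theorem~\ref{aug:int}) nor the boundary flattening (standard); it is the book-keeping required to check that the coefficient of $\partial_{22}\zeta^\varepsilon_{\kappa,\alpha}$ in the pulled-back membrane system and the coefficient of $\partial_{2222}\zeta^\varepsilon_{\kappa,3}$ in the pulled-back flexural equation are non-degenerate, and that algebraically isolating them consumes only as much regularity of the coefficients as is actually available. It is at this juncture that $\bm{\theta}\in\mathcal{C}^4$ and $\gamma\in\mathcal{C}^2$ are both used sharply, and where the strengthening of the assumption on $p^{3,\varepsilon}$ from $L^2$ to $H^1$ becomes indispensable.
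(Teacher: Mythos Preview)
Your strategy is sound and would ultimately succeed, but it is a from-scratch re-derivation of something the paper obtains in one sentence. The paper's proof observes that the penalty term $\kappa^{-1}\bm{\beta}(\bm{\zeta}^\varepsilon_\kappa)$ is a zeroth-order term with a $\kappa$-independent $L^2$ bound (by~\eqref{bdd-3}), moves it to the right-hand side, and then appeals directly to the already-established boundary regularity theory for the \emph{linear} Koiter operator (Theorem~4.4-5 of~\cite{Ciarlet2005}, together with Theorem~4.5-1(b) of~\cite{Ciarlet2000} for the non-divergence form, and Corollary~1, p.~72 of~\cite{Lions1957}). In other words, the paper black-boxes precisely the tangential-difference-quotient/normal-recovery machinery that you propose to carry out by hand. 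Both routes use $\gamma\in\mathcal{C}^2$, $\bm{\theta}\in\mathcal{C}^4$, and $p^{3,\varepsilon}\in H^1(\omega)$ in the same place---inside the linear regularity step---but the paper outsources that step entirely.

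One technical caution about your version: the closing step ``solve the flexural equation for $\partial_{2222}\zeta^\varepsilon_{\kappa,3}$ and then integrate once in $y_2$ to recover $\partial_{222}\zeta^\varepsilon_{\kappa,3}\in L^2$'' is not quite how the argument closes. After the tangential quotient you only control $\partial_{122}\zeta^\varepsilon_{\kappa,3}\in L^2$, so the mixed fourth-order terms such as $a^{1222}\partial_{1222}\zeta^\varepsilon_{\kappa,3}$ sit only in $H^{-1}$, and a naive $y_2$-antiderivative of an $H^{-1}$ right-hand side does not yield an $L^2$ function. The correct way to finish is either to invoke the Agmon--Douglis--Nirenberg complementing condition for the Koiter system (which is exactly what the cited theorems encapsulate) or to run a more careful ODE-in-$y_2$ argument that keeps track of the divergence structure of the mixed terms. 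Your plan is therefore right in architecture but would need this step tightened; the paper avoids the issue by citing the linear theory wholesale.
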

\begin{proof}
Consider the boundary value problem~\eqref{BVP}. Since the solution $\bm{\zeta}^\varepsilon_\kappa$ is of class $\bm{V}_K(\omega)$, we have that an application of the Stampacchia's theorem~\cite{Stampacchia1965} (see also Theorem~4.4 on page 153 of~\cite{EvansGariepy2015}) gives that:
\begin{equation*}
\dfrac{\varepsilon}{\kappa}\bm{\beta}(\bm{\zeta}^\varepsilon_\kappa) \in \bm{H}^1_0(\omega).
\end{equation*}

Therefore, an application of Proposition~9.1 in~\cite{Lions1957}, Theorem~4.4-5 of~\cite{Ciarlet2005} and of the Rellich-Kondra\v{s}ov theorem (cf., e.g., Theorem~6.6-3 of~\cite{PGCLNFAA}) gives that:
\begin{equation*}
\bm{\zeta}^\varepsilon_\kappa \in H^2(\omega) \times H^2(\omega) \times H^3(\omega) \hookrightarrow\hookrightarrow \bm{\mathcal{C}}^0(\overline{\omega}).
\end{equation*}

Since each component of the solution $\bm{\zeta}^\varepsilon_\kappa$ of Problem~\ref{problem2} is continuous, and since $\bm{\theta} \cdot\bm{q}>0$ in $\overline{\omega}$, we have that there exists a tubular neighbourhood $V_{\varepsilon,\kappa}$ of the smooth boundary $\gamma$ such that:
\begin{equation*}
(\bm{\theta}+\zeta^\varepsilon_{\kappa,j}\bm{a}^j) \cdot\bm{q}>0,\quad\textup{ in } V_{\varepsilon,\kappa}.
\end{equation*}

In particular, we have that $\bm{\beta}(\bm{\zeta}^\varepsilon_\kappa) =\bm{0}$ in $V_{\varepsilon,\kappa}$ and the conclusion follows by the standard augmentation of regularity argument near the boundary (cf., e.g., Theorem~4 on page~334 of~\cite{Evans2010}).
\end{proof}

Finally, we recall that the augmentation of regularity up to the boundary holds for domains with Lipschitz continuous boundary provided that these domains are convex (viz. \cite{Eggleston1958} and~\cite{Grisvard2011}).

\section{Approximation of the solution of Problem~\ref{problemK} via the Penalty Method}
\label{approx:original}

The main objective of this section is sharpen the strong convergence~\eqref{beta-5}. To do this, we will resort to the theory of saddle points (cf., e.g., \cite{Ciarlet1989} and~\cite{Glowinski1981}). The results presented in this section are ancillary, in the sense that the convergence of the numerical scheme proposed in the forthcoming sections (cf., Theorem~\ref{t:convergence} and Theorem~\ref{th:conv}) does not hinge on what we are about to prove.

Define the set $\Lambda$ by:
\begin{equation*}
\Lambda:=\{v \in L^2(\omega);  v(y) \ge 0 \textup{ for a.a. } y\in\omega\},
\end{equation*}
and observe that $\Lambda$ is a non-empty, closed, and convex subset of $L^2(\omega)$.

Let us consider the following Lagrangian functional associated with Problem~\ref{problemK}. Let $\mathcal{L}^\varepsilon:\bm{V}_K(\omega) \times \Lambda \to \mathbb{R}$, which is defined by:
\begin{equation}
\label{L}
\begin{aligned}
	\displaystyle
\mathcal{L}^\varepsilon(\bm{\eta},\phi)&:= \dfrac{\varepsilon}{2} \int_{\omega} a^{\alpha\beta\sigma\tau} \gamma_{\sigma\tau}(\bm{\eta}) \gamma_{\alpha\beta}(\bm{\eta}) \sqrt{a} \dd y
+\dfrac{\varepsilon^3}{6}\int_\omega a^{\alpha\beta\sigma\tau} \rho_{\sigma\tau}(\bm{\eta}) \rho_{\alpha\beta}(\bm{\eta}) \sqrt{a} \dd y\\
&\quad-\int_{\omega} p^{i,\varepsilon} \eta_i \sqrt{a} \dd y
-\dfrac{\varepsilon^3}{3} \int_{\omega} \phi [(\bm{\theta}+\eta_i \bm{a}^i)\cdot\bm{q}] \dd y,
\end{aligned}
\end{equation}
for all $(\bm{\eta},\phi) \in \bm{V}_K(\omega) \times \Lambda$.

In the next theorem, we show that the Lagrangian functional $\mathcal{L}^\varepsilon$ associated with Problem~\ref{problemK} has at least one saddle point in the set $\bm{V}_K(\omega) \times \Lambda$. A saddle point for $\mathcal{L}^\varepsilon$ in $\bm{V}_K(\omega) \times \Lambda$ is an element $(\bm{\xi}^\varepsilon,\varphi^\varepsilon) \in \bm{V}_K(\omega) \times \Lambda$ satisfying
\begin{equation}
\label{saddle-1}
\mathcal{L}^\varepsilon(\bm{\xi}^\varepsilon,\phi) \le \mathcal{L}^\varepsilon(\bm{\xi}^\varepsilon,\varphi^\varepsilon) \le \mathcal{L}^\varepsilon(\bm{\eta},\varphi^\varepsilon), \quad\textup{ for all } (\bm{\eta},\phi) \in \bm{V}_K(\omega) \times \Lambda.
\end{equation}

Thanks to Proposition~1.6 in Chapter~VI of~\cite{EkelandTemam} (see also~\cite{Haslinger1981}), we have that finding a saddle point of~\eqref{saddle-1} is equivalent to finding a pair $(\bm{\xi}^\varepsilon,\varphi^\varepsilon) \in \bm{V}_K(\omega) \times \Lambda$ satisfying
\begin{equation}
\label{saddle-2}
\begin{cases}
&\displaystyle\varepsilon\int_{\omega} a^{\alpha\beta\sigma\tau} \gamma_{\sigma\tau}(\bm{\xi}^\varepsilon) \gamma_{\alpha\beta}(\bm{\eta}-\bm{\xi}^\varepsilon) \sqrt{a} \dd y
+\dfrac{\varepsilon^3}{3}\int_\omega a^{\alpha\beta\sigma\tau} \rho_{\sigma\tau}(\bm{\xi}^\varepsilon) \rho_{\alpha\beta}(\bm{\eta}-\bm{\xi}^\varepsilon) \sqrt{a} \dd y\\
&\quad\displaystyle-\dfrac{\varepsilon^3}{3}\int_{\omega}\varphi^\varepsilon(\eta_i-\xi^\varepsilon_i)\bm{a}^i \cdot\bm{q} \dd y \ge \int_{\omega}p^{i,\varepsilon} (\eta_i-\xi^\varepsilon_i) \sqrt{a} \dd y, \quad\textup{ for all }\bm{\eta}=(\eta_i) \in \bm{V}_K(\omega),\\
\\
&\displaystyle\dfrac{\varepsilon^3}{3}\int_{\omega}(\phi-\varphi^\varepsilon) (\bm{\theta}+\xi^\varepsilon_i \bm{a}^i) \cdot\bm{q} \dd y\ge 0,\quad\textup{ for all }\phi \in \Lambda.
\end{cases}
\end{equation}

In the next theorem, we outline the properties of the saddle points of the Lagrangian functional $\mathcal{L}^\varepsilon$.

\begin{theorem}
\label{ex:saddle}
If the Lagrangian functional $\mathcal{L}^\varepsilon:\bm{V}_K(\omega) \times L^2(\omega) \to \mathbb{R}$ defined in~\eqref{L} has at least one saddle point $(\bm{\xi}^\varepsilon,\varphi^\varepsilon) \in \bm{V}_K(\omega) \times \Lambda$, then the first component of one such saddle point is uniquely determined, and coincides with the unique solution of Problem~\ref{problemK}.
\end{theorem}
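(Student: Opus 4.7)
The plan is to split the argument into three steps: reduction to the mixed system~\eqref{saddle-2}, uniqueness of the first component of any saddle point, and existence via passage to the limit in the penalty method. The reduction step is immediate from the version of Proposition~1.6 of Chapter~VI in Ekeland \& Temam~\cite{EkelandTemam} cited just before the statement: since $\mathcal{L}^\varepsilon$ is convex and Gateaux-differentiable in its first argument and affine continuous in its second, the existence of a saddle point of $\mathcal{L}^\varepsilon$ on $\bm{V}_K(\omega)\times\Lambda$ is equivalent to that of a pair $(\bm{\xi}^\varepsilon,\varphi^\varepsilon)$ satisfying~\eqref{saddle-2}.

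To obtain uniqueness of the first component I would fix any such pair. Testing the second variational inequality of~\eqref{saddle-2} with $\phi=0$ and with $\phi=2\varphi^\varepsilon$ yields the complementarity $\int_\omega\varphi^\varepsilon(\bm{\theta}+\xi^\varepsilon_i\bm{a}^i)\cdot\bm{q}\,\dd y=0$, whereas testing with $\phi=\varphi^\varepsilon+\psi$ for arbitrary $\psi\in\Lambda$ forces $(\bm{\theta}+\xi^\varepsilon_i\bm{a}^i)\cdot\bm{q}\ge 0$ a.e.\ in $\omega$, that is, $\bm{\xi}^\varepsilon\in\bm{U}_K(\omega)$. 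Inserting an arbitrary $\bm{\eta}\in\bm{U}_K(\omega)$ in the first inequality of~\eqref{saddle-2} and rewriting the multiplier term as $\tfrac{\varepsilon^3}{3}\int_\omega\varphi^\varepsilon[(\bm{\theta}+\eta_i\bm{a}^i)\cdot\bm{q}-(\bm{\theta}+\xi^\varepsilon_i\bm{a}^i)\cdot\bm{q}]\,\dd y$ — which is non-negative, since $\varphi^\varepsilon\ge 0$, the first summand is non-negative by admissibility of $\bm{\eta}$, and the integral of the second summand vanishes by complementarity — yields exactly the variational inequality of Problem~\ref{problemK}, whose uniqueness (recalled in Section~\ref{sec2}) then pins down $\bm{\xi}^\varepsilon=\bm{\zeta}^\varepsilon$.

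For existence I would set $\bm{\xi}^\varepsilon:=\bm{\zeta}^\varepsilon$ and construct $\varphi^\varepsilon$ as a weak-$L^2$ limit point of the natural candidate
\[
\varphi^\varepsilon_\kappa:=\dfrac{3}{\varepsilon^2\kappa}\cdot\dfrac{\{(\bm{\theta}+\zeta^\varepsilon_{\kappa,j}\bm{a}^j)\cdot\bm{q}\}^{-}}{\sqrt{\sum_{\ell=1}^{3}|\bm{a}^\ell\cdot\bm{q}|^2}}\in\Lambda
\]
produced by the penalty method, with which the equations of Problem~\ref{problem2} rewrite in the saddle-point form $\varepsilon B_M(\bm{\zeta}^\varepsilon_\kappa,\bm{\eta})+\tfrac{\varepsilon^3}{3}B_F(\bm{\zeta}^\varepsilon_\kappa,\bm{\eta})-\tfrac{\varepsilon^3}{3}\int_\omega\varphi^\varepsilon_\kappa\eta_i\bm{a}^i\cdot\bm{q}\,\dd y=\ell^\varepsilon(\bm{\eta})$ for every $\bm{\eta}\in\bm{V}_K(\omega)$. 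After securing a uniform $L^2(\omega)$-bound on $\{\varphi^\varepsilon_\kappa\}_{\kappa>0}$, I would extract a subsequence $\varphi^\varepsilon_\kappa\rightharpoonup\varphi^\varepsilon$ in the weakly closed cone $\Lambda$, pass to the limit in the rewritten equation using the strong convergence $\bm{\zeta}^\varepsilon_\kappa\to\bm{\zeta}^\varepsilon$ from Theorem~\ref{ex-un-kappa} (which yields the first relation of~\eqref{saddle-2}), and exploit the pointwise sign identity $\varphi^\varepsilon_\kappa(\bm{\theta}+\zeta^\varepsilon_{\kappa,j}\bm{a}^j)\cdot\bm{q}\le 0$ combined with a weak-strong product passage to obtain the complementarity $\int_\omega\varphi^\varepsilon(\bm{\theta}+\zeta^\varepsilon_i\bm{a}^i)\cdot\bm{q}\,\dd y=0$ that, together with $\bm{\zeta}^\varepsilon\in\bm{U}_K(\omega)$, delivers the second relation of~\eqref{saddle-2}.

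The main obstacle is precisely the uniform $L^2(\omega)$-bound on $\{\varphi^\varepsilon_\kappa\}_{\kappa>0}$: estimate~\eqref{bdd-3} delivers only $\|\{(\bm{\theta}+\zeta^\varepsilon_{\kappa,j}\bm{a}^j)\cdot\bm{q}\}^{-}\|_{L^2(\omega)}=\mathcal{O}(\sqrt\kappa)$, which controls $\varphi^\varepsilon_\kappa$ in $\bm{V}'_K(\omega)$ but a priori \emph{not} in $L^2(\omega)$. Bridging this gap rests on exploiting the Slater-type strict positivity $\min_{y\in\overline{\omega}}\bm{\theta}(y)\cdot\bm{q}>0$ and $\min_{y\in\overline{\omega}}\bm{a}_3(y)\cdot\bm{q}>0$ — which places $\bm{0}$ in $\bm{U}_K(\omega)$ with a uniformly positive gap — together with the augmentation-of-regularity results furnished by Theorems~\ref{aug:int} and~\ref{aug:bdry}, so that the dual-space bound on $\varphi^\varepsilon_\kappa$ can be upgraded into an $L^2$-bound. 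Once that bound is in hand, the remainder of the passage to the limit is routine.
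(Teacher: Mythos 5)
Your reduction to the system~\eqref{saddle-2} and your uniqueness argument are sound: showing that the first component of \emph{any} saddle point must belong to $\bm{U}_K(\omega)$, satisfy the complementarity relation, and hence solve Problem~\ref{problemK} is a legitimate (and slightly different) route to uniqueness — the paper instead invokes the abstract structure of the saddle-point set (Propositions~1.4 and~1.5 in Chapter~VI of~\cite{EkelandTemam}, using strict convexity in the first argument) and only afterwards performs the identification with $\bm{\zeta}^\varepsilon$ by the same choices $\phi=\varphi^\varepsilon+\{(\bm{\theta}+\xi^\varepsilon_i\bm{a}^i)\cdot\bm{q}\}^{-}$, $\phi=0$, $\phi=2\varphi^\varepsilon$ that you use. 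The real problem is your existence step. The paper obtains existence of a saddle point directly from the abstract existence theorem for convex–concave functionals (Proposition~2.2 in Chapter~VI of~\cite{EkelandTemam}), with no recourse to the penalty method. You instead propose to build $\varphi^\varepsilon$ as a weak $L^2$ limit of the penalty multipliers $\varphi^\varepsilon_\kappa$, and you correctly identify that this requires a uniform $L^2(\omega)$ bound on $\{\varphi^\varepsilon_\kappa\}_{\kappa>0}$ — but you do not prove it, and the tools you point to do not obviously supply it.

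Concretely: estimate~\eqref{bdd-3} gives $\|\{(\bm{\theta}+\zeta^\varepsilon_{\kappa,j}\bm{a}^j)\cdot\bm{q}\}^{-}\|_{L^2(\omega)}\le M_1\sqrt{\kappa}$, hence only $\|\varphi^\varepsilon_\kappa\|_{L^2(\omega)}=\mathcal{O}(\kappa^{-1/2})$, which diverges. The Slater-type positivity of $\bm{\theta}\cdot\bm{q}$ would classically yield a bound on $\int_\omega\varphi^\varepsilon_\kappa\,(\bm{\theta}\cdot\bm{q})\,\dd y$, i.e.\ an $L^1$-type bound on the non-negative multiplier, and even that requires an admissible test field $\bm{\eta}$ with $\eta_i\bm{a}^i=-\bm{\theta}$, which violates the boundary conditions in $\bm{V}_K(\omega)$; in any case $L^1$ weak compactness does not give a weak $L^2$ limit point in $\Lambda$. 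The augmentation-of-regularity results are no help either: the constants in~\eqref{conclusion-2} themselves blow up like $\kappa^{-1/2}$, so they cannot be used to upgrade the dual-space bound~$\frac{1}{\kappa}\|\bm{\beta}(\bm{\zeta}^\varepsilon_\kappa)\|_{\bm{V}'_K(\omega)}\le M_1$ to an $L^2$ bound uniformly in $\kappa$. As written, your existence argument therefore has a genuine gap at its central point; the clean repair is to abandon the constructive route and, as the paper does, get existence from the abstract minimax theorem, reserving the penalty method for the quantitative comparison of Theorem~\ref{difference-1}.
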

\begin{proof}
By Proposition~1.4 in Chapter VI of~\cite{EkelandTemam}, the set of saddle points for $\mathcal{L}^\varepsilon$ is of the form $\bm{\mathcal{A}}_0 \times \mathcal{B}_0$, where
\begin{equation*}
\bm{\mathcal{A}}_0 \subset \bm{V}_K(\omega) \quad\textup{ and }\quad \mathcal{B}_0\subset \Lambda. 
\end{equation*}

The bilinear form
\begin{equation*}
(\bm{\eta},\bm{\xi}) \in \bm{V}_K(\omega) \times \bm{V}_K(\omega) \mapsto \varepsilon\int_{\omega} a^{\alpha\beta\sigma\tau} \gamma_{\sigma\tau}(\bm{\eta}) \gamma_{\alpha\beta}(\bm{\xi}) \sqrt{a} \dd y
+\dfrac{\varepsilon^3}{3}\int_\omega a^{\alpha\beta\sigma\tau} \rho_{\sigma\tau}(\bm{\eta}) \rho_{\alpha\beta}(\bm{\xi}) \sqrt{a} \dd y,
\end{equation*}
is symmetric, continuous and $\bm{V}_K(\omega)$-elliptic (cf., e.g., Theorem~\ref{kornsurface}). Observe that, given any $\phi \in \Lambda$, the mapping
\begin{equation*}
\bm{\eta} \in \bm{V}_K(\omega) \mapsto \mathcal{L}^\varepsilon(\bm{\eta},\phi),
\end{equation*}
is strictly convex as a sum of quadratic functionals and linear functionals, and continuous.

The bilinear form
\begin{equation*}
(\bm{\eta},\phi) \in \bm{V}_K(\omega) \times \Lambda \mapsto 
-\dfrac{\varepsilon^3}{3} \int_{\omega} \phi (\eta_i \bm{a}^i\cdot\bm{q}) \dd y,
\end{equation*}
is continuous and such that, for all $\bm{\eta}=(\eta_i)\in \bm{V}_K(\omega)$,
\begin{equation*}
\phi \in L^2(\omega) \mapsto \mathcal{L}^\varepsilon(\bm{\eta},\phi),
\end{equation*}
is linear (and so, \emph{a fortiori}, concave) and continuous. Therefore, by Proposition~1.5 in Chapter VI of~\cite{EkelandTemam}, the set $\bm{\mathcal{A}}_0$ contains at most one point, which we denote by $\bm{\xi}^\varepsilon$.

Let us now check that $\bm{\xi}^\varepsilon \in \bm{U}_K(\omega)$.
By the second set of variational inequalities in~\eqref{saddle-2} and the definition of $\Lambda$, we obtain that specialising $\phi=\varphi^\varepsilon+\{(\bm{\theta}+\xi^\varepsilon_i\bm{a}^i)\cdot\bm{q}\}^{-}$ gives:
\begin{equation}
\label{id-saddle-2}
\int_\omega \left|\{(\bm{\theta}+\xi^\varepsilon_i \bm{a}^i)\cdot\bm{q}\}^{-}\right|^2 \dd y \le 0,
\end{equation}
which in turn implies that $\bm{\xi}^\varepsilon \in \bm{U}_K(\omega)$. 

By the second set of variational inequalities in~\eqref{saddle-2} and the definition of $\Lambda$, we obtain that specialising $\phi=0$ and $\phi=2\varphi^\varepsilon$ gives
\begin{equation}
\label{id-saddle}
\int_\omega \varphi^\varepsilon [(\bm{\theta}+\xi^\varepsilon_i \bm{a}^i)\cdot\bm{q}] \dd y =0.
\end{equation}

Specialising $\bm{\eta} \in \bm{U}_K(\omega)$ in the first set of variational inequalities of~\eqref{saddle-2} and exploiting~\eqref{id-saddle-2} and~\eqref{id-saddle} gives that $\bm{\xi}^\varepsilon$ solves Problem~\ref{problemK}.
This completes the proof.
\end{proof}

Thanks to Theorem~\ref{ex:saddle}, we are able to estimate the norm of the difference of the solution $\bm{\zeta}^\varepsilon$ of Problem~\ref{problemK} and the solution $\bm{\zeta}^\varepsilon_\kappa$ of Problem~\ref{problem2}.

\begin{theorem}
\label{difference-1}
Let $(\bm{\zeta}^\varepsilon,\psi^\varepsilon) \in \bm{V}_K(\omega) \times \Lambda$ be a saddle point of the Lagrangian functional $\mathcal{L}^\varepsilon$ associated with Problem~\ref{problemK}. Let $\bm{\zeta}^\varepsilon_\kappa \in \bm{V}_K(\omega)$ be the solution of Problem~\ref{problem2}. Then, the following estimate holds:
\begin{equation*}
\|\bm{\zeta}^\varepsilon_\kappa -\bm{\zeta}^\varepsilon\|_{\bm{V}_K(\omega)}
\le \dfrac{\sqrt{c_e} c_0 \sqrt[4]{\kappa}}{\sqrt[4]{a_0}} \|\psi^\varepsilon\|_{L^2(\omega)}^{1/2}.
\end{equation*}
\end{theorem}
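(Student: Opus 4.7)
The plan is to test both the Euler--Lagrange equation obeyed by the saddle point $(\bm{\zeta}^\varepsilon,\psi^\varepsilon)$ and the penalised variational equation of Problem~\ref{problem2} against the common direction $\bm{v} = \bm{\zeta}^\varepsilon_\kappa - \bm{\zeta}^\varepsilon \in \bm{V}_K(\omega)$, and then subtract. Observe first that the first relation of~\eqref{saddle-2} is in fact an equality on $\bm{V}_K(\omega)$: it holds for every $\bm{\eta}$ in the vector space $\bm{V}_K(\omega)$, so the inequality is saturated by replacing $\bm{\eta}$ with $2\bm{\zeta}^\varepsilon - \bm{\eta}$. Subtraction yields an identity whose left-hand side is $\varepsilon B_M(\bm{\zeta}^\varepsilon_\kappa - \bm{\zeta}^\varepsilon,\bm{\zeta}^\varepsilon_\kappa - \bm{\zeta}^\varepsilon) + \frac{\varepsilon^3}{3}B_F(\bm{\zeta}^\varepsilon_\kappa - \bm{\zeta}^\varepsilon,\bm{\zeta}^\varepsilon_\kappa - \bm{\zeta}^\varepsilon) + \frac{\varepsilon}{\kappa}\int_\omega \bm{\beta}(\bm{\zeta}^\varepsilon_\kappa)\cdot(\bm{\zeta}^\varepsilon_\kappa - \bm{\zeta}^\varepsilon) \dd y$, and whose right-hand side is $-\frac{\varepsilon^3}{3}\int_\omega \psi^\varepsilon(\zeta^\varepsilon_{\kappa,i} - \zeta^\varepsilon_i)\bm{a}^i \cdot \bm{q}\dd y$.

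Since $\bm{\zeta}^\varepsilon \in \bm{U}_K(\omega)$ forces $\bm{\beta}(\bm{\zeta}^\varepsilon) = \bm{0}$, the penalty cross-term equals $\int_\omega (\bm{\beta}(\bm{\zeta}^\varepsilon_\kappa) - \bm{\beta}(\bm{\zeta}^\varepsilon))\cdot(\bm{\zeta}^\varepsilon_\kappa - \bm{\zeta}^\varepsilon)\dd y \ge 0$ by the monotonicity of $\bm{\beta}$ established in Lemma~\ref{lem:beta}, and can be discarded. Combining the uniform positive-definiteness of $(a^{\alpha\beta\sigma\tau})$ (constant $c_e^{-1}$) with the Korn-type inequality of Theorem~\ref{kornsurface} (constant $c_0$) produces the lower bound $\frac{\varepsilon^3 \sqrt{a_0}}{3 c_e c_0^2}\|\bm{\zeta}^\varepsilon_\kappa - \bm{\zeta}^\varepsilon\|_{\bm{V}_K(\omega)}^2$ on the left-hand side. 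To treat the right-hand side I would write $(\zeta^\varepsilon_{\kappa,i} - \zeta^\varepsilon_i)\bm{a}^i \cdot \bm{q} = (\bm{\theta}+\zeta^\varepsilon_{\kappa,i}\bm{a}^i)\cdot\bm{q} - (\bm{\theta}+\zeta^\varepsilon_i\bm{a}^i)\cdot\bm{q}$, annihilate the second bracket against $\psi^\varepsilon$ via the complementarity identity~\eqref{id-saddle}, split the surviving quantity into its positive and negative parts, and drop the favourably-signed contribution of $\psi^\varepsilon\{(\bm{\theta}+\zeta^\varepsilon_{\kappa,i}\bm{a}^i)\cdot\bm{q}\}^{+}$ (which is nonnegative and appears with a minus sign). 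Cauchy--Schwarz applied to what remains delivers $\frac{\varepsilon^3 \sqrt{a_0}}{3 c_e c_0^2}\|\bm{\zeta}^\varepsilon_\kappa - \bm{\zeta}^\varepsilon\|_{\bm{V}_K(\omega)}^2 \le \frac{\varepsilon^3}{3}\|\psi^\varepsilon\|_{L^2(\omega)} \|\{(\bm{\theta}+\zeta^\varepsilon_{\kappa,i}\bm{a}^i)\cdot\bm{q}\}^{-}\|_{L^2(\omega)}$.

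The conclusion then follows by invoking the a priori bound~\eqref{bdd-3}, which controls the constraint violation at rate $\sqrt{\kappa}$, cancelling the $\varepsilon^3$ factors on both sides, and taking square roots. The cancellation of $\varepsilon^3$ explains the absence of any power of $\varepsilon$ in the final estimate, and the extraction of a square root converts the natural $\sqrt{\kappa}\,\|\psi^\varepsilon\|_{L^2(\omega)}$ scaling of the squared-norm inequality into the $\sqrt[4]{\kappa}\,\|\psi^\varepsilon\|_{L^2(\omega)}^{1/2}$ factor appearing in the theorem. The genuinely delicate step is the simultaneous use of the complementarity identity~\eqref{id-saddle} and the positivity of $\psi^\varepsilon$: without them one would be left, on the right-hand side, with an integral of $\psi^\varepsilon$ against a quantity of indeterminate sign, and there would be no way to extract a $\sqrt{\kappa}$-rate from the constraint violation alone.
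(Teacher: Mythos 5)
Your argument is correct and follows essentially the same route as the paper's proof: test the penalised equations and the first relation of~\eqref{saddle-2} against $\bm{\zeta}^\varepsilon_\kappa-\bm{\zeta}^\varepsilon$, discard the penalty cross-term by monotonicity of $\bm{\beta}$ (Lemma~\ref{lem:beta}, using $\bm{\beta}(\bm{\zeta}^\varepsilon)=\bm{0}$), handle the multiplier term via~\eqref{id-saddle}, the sign of $\psi^\varepsilon$, Cauchy--Schwarz and~\eqref{bdd-3}, and conclude by coercivity (Theorem~\ref{kornsurface}) and a square root. Your observation that the first relation of~\eqref{saddle-2} is actually an equality on the vector space $\bm{V}_K(\omega)$ is a correct (and slightly cleaner) repackaging of the paper's step of specialising $\bm{\eta}=\bm{\zeta}^\varepsilon_\kappa$ in the inequality, but it does not change the substance of the argument.
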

\begin{proof}
Let us test Problem~\ref{problem2} at $(\bm{\zeta}^\varepsilon_\kappa-\bm{\zeta}^\varepsilon) \in \bm{V}_K(\omega)$. The monotonicity of the operator $\bm{\beta}$ gives:
\begin{equation}
\label{A}
\begin{aligned}
&\varepsilon\int_{\omega} a^{\alpha\beta\sigma\tau} \gamma_{\sigma\tau}(\bm{\zeta}^\varepsilon_\kappa) \gamma_{\alpha\beta}(\bm{\zeta}^\varepsilon_\kappa-\bm{\zeta}^\varepsilon) \sqrt{a} \dd y
+\dfrac{\varepsilon^3}{3}\int_{\omega} a^{\alpha\beta\sigma\tau} \rho_{\sigma\tau}(\bm{\zeta}^\varepsilon_\kappa) \rho_{\alpha\beta}(\bm{\zeta}^\varepsilon_\kappa-\bm{\zeta}^\varepsilon) \sqrt{a} \dd y\\
&\le\int_{\omega} p^{i,\varepsilon} (\zeta^\varepsilon_{\kappa,i}-\zeta^\varepsilon_i) \sqrt{a} \dd y.
\end{aligned}
\end{equation}

Let us now consider the saddle point $(\bm{\zeta}^\varepsilon,\psi^\varepsilon) \in \bm{V}_K(\omega) \times \Lambda$ for the Lagrangian functional $\mathcal{L}^\varepsilon$ associated with Problem~\ref{problemK} (viz. Theorem~\ref{ex:saddle}). Thanks to~\eqref{id-saddle}, we have that the specialisation $\bm{\eta}=\bm{\zeta}^\varepsilon_\kappa$ in the first set of variational inequalities in~\eqref{saddle-2} gives:
\begin{equation}
\label{B}
\begin{aligned}
&-\varepsilon\int_{\omega} a^{\alpha\beta\sigma\tau} \gamma_{\sigma\tau}(\bm{\zeta}^\varepsilon) \gamma_{\alpha\beta}(\bm{\zeta}^\varepsilon_\kappa-\bm{\zeta}^\varepsilon) \sqrt{a} \dd y
-\dfrac{\varepsilon^3}{3}\int_{\omega} a^{\alpha\beta\sigma\tau} \rho_{\sigma\tau}(\bm{\zeta}^\varepsilon) \rho_{\alpha\beta}(\bm{\zeta}^\varepsilon_\kappa-\bm{\zeta}^\varepsilon) \sqrt{a} \dd y\\
&\le-\int_{\omega} p^{i,\varepsilon} (\zeta^\varepsilon_{\kappa,i}-\zeta^\varepsilon_i) \sqrt{a} \dd y -\dfrac{\varepsilon^3}{3} \int_\omega \psi^\varepsilon [(\bm{\theta}+\zeta^\varepsilon_{\kappa,i}\bm{a}^i)\cdot\bm{q}] \dd y.
\end{aligned}
\end{equation}

Adding~\eqref{B} to~\eqref{A}, and applying Theorem~\ref{kornsurface}, and~\eqref{bdd-3} gives:
\begin{equation*}
\begin{aligned}
&\dfrac{\varepsilon^3 \sqrt{a_0}}{3c_e c_0^2}\|\bm{\zeta}^\varepsilon_\kappa -\bm{\zeta}^\varepsilon\|_{\bm{V}_K(\omega)}^2\le \varepsilon\int_{\omega} a^{\alpha\beta\sigma\tau} \gamma_{\sigma\tau}(\bm{\zeta}^\varepsilon_\kappa-\bm{\zeta}^\varepsilon) \gamma_{\alpha\beta}(\bm{\zeta}^\varepsilon_\kappa-\bm{\zeta}^\varepsilon) \sqrt{a} \dd y\\
&\quad+\dfrac{\varepsilon^3}{3}\int_{\omega} a^{\alpha\beta\sigma\tau} \rho_{\sigma\tau}(\bm{\zeta}^\varepsilon_\kappa-\bm{\zeta}^\varepsilon) \rho_{\alpha\beta}(\bm{\zeta}^\varepsilon_\kappa-\bm{\zeta}^\varepsilon) \sqrt{a} \dd y\\
&\le-\dfrac{\varepsilon^3}{3} \int_\omega \psi^\varepsilon [(\bm{\theta}+\zeta^\varepsilon_{\kappa,i}\bm{a}^i)\cdot\bm{q}] \dd y
\le \dfrac{\varepsilon^3}{3} \int_{\omega} \psi^\varepsilon \{(\bm{\theta}+\zeta^\varepsilon_{\kappa,i}\bm{a}^i)\cdot\bm{q}\}^{-} \dd y\\
&\le \dfrac{\varepsilon^3}{3}\|\psi^\varepsilon\|_{L^2(\omega)} \|\{(\bm{\theta}+\zeta^\varepsilon_{\kappa,i}\bm{a}^i)\cdot\bm{q}\}^{-}\|_{L^2(\omega)}
\le \dfrac{\varepsilon^3 \sqrt{\kappa}}{3}\|\psi^\varepsilon\|_{L^2(\omega)}.
\end{aligned}
\end{equation*}

In conclusion, we have the sought estimate
\begin{equation*}
\|\bm{\zeta}^\varepsilon_\kappa -\bm{\zeta}^\varepsilon\|_{\bm{V}_K(\omega)}^2
\le \dfrac{c_e c_0^2 \sqrt{\kappa}}{\sqrt{a_0}} \|\psi^\varepsilon\|_{L^2(\omega)},
\end{equation*}
and the proof is complete.
\end{proof}

Note in passing that it would be possible to establish the conclusion in Theorem~\ref{ex:saddle} by restricting the first argument of the Lagrangian functional associated with Problem~\ref{problemK} in the non-empty, closed and convex subset $\bm{U}_K(\omega)$. This restrictions in turn implies that the only saddle point for the functional $\mathcal{L}^\varepsilon:\bm{U}_K(\omega) \times \Lambda \to \mathbb{R}$ is $(\bm{\zeta}^\varepsilon,0)$, where $\bm{\zeta}^\varepsilon \in \bm{U}_K(\omega)$ is the unique solution of Problem~\ref{problemK}. A Lagrangian defined over the set $\bm{U}_K(\omega)$, however, would restrict the validity of the first set of variational inequalities in~\eqref{saddle-2} to the sole test vector fields in $\bm{U}_K(\omega)$, and this would prevent us from specialising $\bm{\eta}=\bm{\zeta}^\varepsilon_\kappa$ for obtaining the crucial estimate~\eqref{B}. That is why we must define the Lagrangian functional $\mathcal{L}^\varepsilon$ associated with Problem~\ref{problemK} over $\bm{V}_K(\omega) \times \Lambda$. In general, the point $(\bm{\zeta}^\varepsilon,0)$ is not a saddle point for the Lagrangian functional $\mathcal{L}^\varepsilon$ associated with Problem~\ref{problemK} when it is defined over $\bm{V}_K(\omega) \times \Lambda$.

Besides, note that the right-hand side in the main estimate in Theorem~\ref{difference-1} is not, in general, bounded independently of $\varepsilon$.
We also note in passing that establishing the existence of saddle points for $\mathcal{L}^\varepsilon$ in one such framework is a difficult problem (cf., e.g., Remark~9.2 in Chapter~3, Section~9, page~223 of~\cite{Glowinski1981}; see also~\cite{GMV84}).

In section~\ref{approx:penalty} we shall recover an estimate for the difference between the solution $\bm{\zeta}^\varepsilon_\kappa$ of Problem~\ref{problem2} and the solution $\bm{\zeta}^{\varepsilon,h}_\kappa$ of the corresponding discretised problem, which will be denoted by~$\mathcal{P}^{\varepsilon,h}_{K,\kappa}(\omega)$.

By so doing, we will be able to provide a sound approximation of the solution $\bm{\zeta}^\varepsilon$ of Problem~\ref{problemK} in terms of the solution $\bm{\zeta}^{\varepsilon,h}_\kappa$ of the Finite Element discretisation of Problem~\ref{problem2}. However, before arriving at the desired conclusion, we need additional preliminary results, that we present in the next section.

\section{The doubly-penalised mixed formulation of Problem~\ref{problemK}}
\label{mixed:penalty}

The direct approximation of the solution of Problem~\ref{problemK} by means of a conforming Finite Element Method is not viable, since the transverse component of the solution is of class $H^2_0(\omega)$. The latter regularity, would require us to show that the transverse component of the solution of Problem~\ref{problemK} is \emph{at least} of class $H^4(\omega)$, which is clearly in contradiction with the results proved by Caffarelli and his collaborators~\cite{Caffarelli1979,CafFriTor1982}, whom showed that the highest achievable regularity for the solution of a variational inequality governed by a fourth order operator is $H^3(\omega)$.

The direct approximation of the solution of Problem~\ref{problemK} via a non-conforming Finite Element Method based on the Enriching Operators technique, proposed by Brenner and her collaborators~\cite{Brenner2013}, in general does not work when the minimiser of the constrained energy functional under consideration is a vector field. A special exception was proved to work in the case where the solution is a Kirchhoff-Love field, as it happens in the case where the energy functional is associated with the displacement of a linearly elastic shallow shell subjected to an obstacle~\cite{PS}. Besides, the latter technique cannot be easily converted into code.
For the sake of completeness, we also cite the recent paper~\cite{KNM24}, where the convergence analysis for a Finite Element Method for an obstacle problem for Naghdi's shells whose constraints bears only on the transverse component of the displacement field.

Since the solution of Problem~\ref{problemK} is not a Kirchhoff-Love field, we need to attack the problem amounting to approximate the solution of Problem~\ref{problemK} from a different angle. The idea we propose is to exploit Theorem~\ref{difference-1}, and approximate the solution of Problem~\ref{problem2} instead.
The approximation of the solution of Problem~\ref{problem2} via a conforming or non-conforming Finite Element Method, although feasible from the theoretical point of view, is not easily implementable on a computer as most of the high-level finite element packages do not come with libraries for Finite Elements for fourth order problems like, for instance the Hsieh-Clough-Tocher triangle, the Argyris triangle, or the Morley triangle~\cite{PGCFEM}.

We thus replace Problem~\ref{problem2} by the equivalent intrinsic formulation proposed by Blouza \& Le Dret~\cite{BlouzaLeDret1999}. The intrinsic formulation proposed by Blouza \& Le Dret, that was proved to work for middle surface with little regularity, is based on admissible displacements of the form $\tilde{\bm{\eta}}=\eta_i \bm{a}^i$ rather than the vector fields $\bm{\eta}=(\eta_i)$ utilised to formulate the variational problems considered so far.

It can be shown (viz. Lemma~4 of~\cite{BlouzaLeDret1999}) that the space $\bm{V}_K(\omega)$ defined in section~\ref{sec2} is a subspace of the space
$$
\tilde{\bm{V}}(\omega):=\{\tilde{\bm{\eta}}\in\bm{H}^1_0(\omega); \partial_{\alpha\beta} \tilde{\bm{\eta}} \cdot\bm{a}_3 \in L^2(\omega)\}.
$$

More precisely, it can be shown (Lemma~12 of~\cite{BlouzaLeDret1999}) that the space $\bm{V}_K(\omega)$ is isomorphic to the space $\tilde{\bm{V}}_0(\omega)$ defined by:
$$
\tilde{\bm{V}}_0(\omega):=\{\tilde{\bm{\eta}} \in \tilde{\bm{V}}(\omega); \partial_\alpha \tilde{\bm{\eta}} \cdot \bm{a}_3 =0 \textup{ on }\gamma\}.
$$

After equipping the space $\tilde{\bm{V}}(\omega)$ with the norm
$$
\|\tilde{\bm{\eta}}\|_{\tilde{\bm{V}}(\omega)}:=\left\{\|\tilde{\bm{\eta}}\|_{\bm{H}^1_0(\omega)}^2+\sum_{\alpha, \beta}\|\partial_{\alpha\beta}\tilde{\bm{\eta}} \cdot \bm{a}_3\|_{L^2(\omega)}\right\}^{1/2},\quad\textup{ for all }\tilde{\bm{\eta}} \in \tilde{\bm{V}}(\omega),
$$
it can be shown that the norm $\|\cdot\|_{\tilde{\bm{V}}(\omega)}$ is equivalent to the norm
$$
\vertiii{\tilde{\bm{\eta}}}:=\left\{\sum_{\alpha, \beta}\|\gamma_{\alpha\beta}(\tilde{\bm{\eta}})\|_{L^2(\omega)}^2+ \sum_{\alpha, \beta}\|\rho_{\alpha\beta}(\tilde{\bm{\eta}})\|_{L^2(\omega)}^2\right\}^{1/2},
$$
along all the elements $\tilde{\bm{\eta}} \in \tilde{\bm{V}}(\omega)$ (viz. Lemma~11 of~\cite{BlouzaLeDret1999}).

The covariant components of the change of metric tensor can be expressed in terms of the displacement $\tilde{\bm{\eta}}=\eta_i \bm{a}^i$, and they take the following form:
\begin{equation*}
\tilde{\gamma}_{\alpha\beta}(\tilde{\bm{\eta}})=\dfrac{1}{2}\left(\partial_\alpha\tilde{\bm{\eta}} \cdot\bm{a}^\beta+\partial_\beta\tilde{\bm{\eta}}\cdot\bm{a}^\alpha\right).
\end{equation*}

The covariant components of the change of curvature tensor can be expressed in terms of the displacement $\tilde{\bm{\eta}}=\eta_i \bm{a}^i$, and they take the following form:
\begin{equation*}
\begin{aligned}
\tilde{\rho}_{\alpha\beta}(\tilde{\bm{\eta}})&=(\partial_{\alpha\beta}\tilde{\bm{\eta}} -\Gamma_{\alpha\beta}^\sigma \partial_\sigma\tilde{\bm{\eta}})\cdot\bm{a}^3\\
&=\dfrac{1}{2}\left(\partial_\alpha\tilde{\bm{\eta}} \cdot\partial_\beta\bm{a}^3+\partial_\beta\tilde{\bm{\eta}} \cdot\partial_\alpha\bm{a}^3+\partial_\alpha\tilde{\bm{\xi}} \cdot\bm{a}_\beta+\partial_\beta\tilde{\bm{\xi}} \cdot\bm{a}_\alpha\right),
\end{aligned}
\end{equation*}
where the last equality holds when $\tilde{\bm{\xi}}=-(\partial_\alpha\tilde{\bm{\eta}}\cdot\bm{a}_3)\bm{a}^\alpha$. We thus define the space:
\begin{equation*}
\bm{W}(\omega):=\{(\bm{\tilde{\bm{\eta}}},\tilde{\bm{\xi}}) \in \bm{H}^1_0(\omega) \times \bm{H}^1_0(\omega); \tilde{\bm{\xi}}+(\partial_\alpha\tilde{\bm{\eta}}\cdot\bm{a}_3)\bm{a}^\alpha=\bm{0}\},
\end{equation*}
and note that $\bm{W}(\omega)$ coincides with the space $\tilde{\bm{V}}_0(\omega)$ (cf., e.g., page~140 of~\cite{Blouza2016}).

Since the immersion $\bm{\theta}$ is, by assumption, of class $\mathcal{C}^3(\overline{\omega};\mathbb{E}^3)$ then we have that by Lemma~2 of~\cite{BlouzaLeDret1999} (see also~\eqref{cmt} and~\eqref{cct}):
\begin{equation}
\label{coincidence}
\begin{aligned}
\tilde{\gamma}_{\alpha\beta}(\tilde{\bm{\eta}})&=\gamma_{\alpha\beta}(\bm{\eta}), \quad\textup{ for all }\bm{\eta}=(\eta_i) \in H^1(\omega)\times H^1(\omega)\times L^2(\omega),\\
\tilde{\rho}_{\alpha\beta}(\tilde{\bm{\eta}})&=\rho_{\alpha\beta}(\bm{\eta}), \quad\textup{ for all }\bm{\eta}=(\eta_i) \in H^1(\omega)\times H^1(\omega)\times H^2(\omega).
\end{aligned}
\end{equation}

We can thus propose an \emph{intrinsic} variational formulation of Problem~\ref{problemK}. First, we give an alternative definition for the covariant components of the change of curvature tensor $\tilde{\rho}_{\alpha\beta}$; we let $\tilde{\rho}_{\alpha\beta}: \bm{H}^1(\omega) \times \bm{H}^1(\omega) \to \mathbb{R}$ be defined by
\begin{equation}
\label{rho-alt}
\tilde{\rho}_{\alpha\beta}(\tilde{\bm{\eta}},\tilde{\bm{\xi}}):=
\dfrac{1}{2}\left(\partial_\alpha\tilde{\bm{\eta}} \cdot\partial_\beta\bm{a}^3+\partial_\beta\tilde{\bm{\eta}} \cdot\partial_\alpha\bm{a}^3+\partial_\alpha\tilde{\bm{\xi}} \cdot\bm{a}_\beta+\partial_\beta\tilde{\bm{\xi}} \cdot\bm{a}_\alpha\right),
\end{equation}
for all $(\tilde{\bm{\eta}},\tilde{\bm{\xi}}) \in \bm{H}^1(\omega) \times \bm{H}^1(\omega)$.
We will see that this formulation of the change of curvature tensor will be more amenable in the context of the study of the doubly penalised mixed formulation corresponding to Problem~\ref{problemK} we will introduce next. Define the space:
\begin{equation*}
\mathbb{X}(\omega):=\bm{H}^1_0(\omega) \times \bm{H}^1_0(\omega).
\end{equation*}

The intrinsic formulation of Problem~\ref{problemK} thus takes the following form.

\begin{customprob}{$\tilde{\mathcal{P}}_K^\varepsilon(\omega)$}
	\label{problemK1}
	Find 
	$(\tilde{\bm{\zeta}}^\varepsilon,\tilde{\bm{\varphi}}^\varepsilon) \in \tilde{\mathbb{U}}_K(\omega):=\{(\tilde{\bm{\eta}},\tilde{\bm{\xi}})\in \mathbb{X}(\omega);\tilde{\bm{\xi}}+(\partial_\alpha\tilde{\bm{\eta}} \cdot\bm{a}_3)\bm{a}^\alpha=\bm{0} \textup{ for a.a. }y \in \omega \textup{ and }(\bm{\theta}(y)+\tilde{\bm{\eta}}(y)) \cdot \bm{q} \ge 0 \textup{ for a.a. }y \in \omega\}$ satisfying the following variational inequalities:
	\begin{equation*}
	\varepsilon\int_\omega a^{\alpha\beta\sigma\tau} \tilde{\gamma}_{\sigma\tau}(\tilde{\bm{\zeta}}^\varepsilon) \tilde{\gamma}_{\alpha\beta} (\tilde{\bm{\eta}} - \tilde{\bm{\zeta}}^\varepsilon) \sqrt{a} \dd y
	+\dfrac{\varepsilon^3}{3} \int_\omega a^{\alpha\beta\sigma\tau} \tilde{\rho}_{\sigma\tau}(\tilde{\bm{\zeta}}^\varepsilon,\tilde{\bm{\varphi}}^\varepsilon) \tilde{\rho}_{\alpha\beta} (\tilde{\bm{\eta}} - \tilde{\bm{\zeta}}^\varepsilon,\tilde{\bm{\xi}}-\tilde{\bm{\varphi}}^\varepsilon) \sqrt{a} \dd y \ge \int_\omega \tilde{\bm{p}}^{\varepsilon} \cdot (\tilde{\bm{\eta}} - \tilde{\bm{\zeta}}^\varepsilon) \sqrt{a} \dd y,
	\end{equation*}
	for all $(\tilde{\bm{\eta}},\tilde{\bm{\xi}}) \in \tilde{\mathbb{U}}_K(\omega)$, where $\tilde{\bm{p}}^{\varepsilon}:=\varepsilon\left(\int_{-1}^{1} f^i \dd x_3\right)\bm{a}_i$.
	\bqed
\end{customprob}

This variational problem, which is simply a re-writing of Problem~\ref{problemK}, admits one and only one solution. Indeed, the set $\tilde{\mathbb{U}}_K(\omega)$ is non-empty, closed and convex, and the bilinear form associated with Problem~\ref{problemK1} is continuous and elliptic as a direct application of Lemma~11 of~\cite{BlouzaLeDret1999} and Lemma~3.3 of~\cite{Blouza2016}.

Moreover, thanks to~\eqref{coincidence}, the fist component of the solution $(\tilde{\bm{\zeta}}^\varepsilon,\tilde{\bm{\varphi}}^\varepsilon) \in \tilde{\mathbb{U}}_K(\omega)$ of Problem~\ref{problemK1} corresponds to the solution of Problem~\ref{problemK} via the isomorphism $\zeta^\varepsilon_j=\tilde{\bm{\zeta}}^\varepsilon \cdot \bm{a}_j$, for all $1 \le j \le 3$.

In the same spirit of section~\ref{sec:penalty}, we can penalise Problem~\ref{problemK1} by replacing the constraints
\begin{align*}
\tilde{\bm{\xi}}+(\partial_\alpha\tilde{\bm{\eta}} \cdot\bm{a}_3)\bm{a}^\alpha=\bm{0},\quad \textup{ for a.a. }y \in \omega,\\
(\bm{\theta}(y)+\tilde{\bm{\eta}}(y)) \cdot \bm{q} \ge 0,\quad \textup{ for a.a. }y \in \omega,
\end{align*}
by two monotone terms appearing in the variational formulation. Clearly, in the same spirit of what has been done in section~\ref{sec:penalty}, the set of variational inequalities posed over the non-empty, closed, and convex set $\tilde{\mathbb{U}}_K(\omega)$ gets replaced by a family of variational equations indexed over the penalty parameter $\kappa>0$. The doubly penalised formulation of Problem~\ref{problemK1} takes the following form.

\begin{customprob}{$\tilde{\mathcal{P}}_{K,\kappa}^\varepsilon(\omega)$}
	\label{problem2-1}
	Find 
	$(\tilde{\bm{\zeta}}^\varepsilon_\kappa,\tilde{\bm{\varphi}}^\varepsilon_\kappa) \in \bm{H}^1_0(\omega) \times \bm{H}^1_0(\omega)$ satisfying the following variational equations:
	\begin{equation*}
	\begin{aligned}
	&\varepsilon\int_\omega a^{\alpha\beta\sigma\tau} \tilde{\gamma}_{\sigma\tau}(\tilde{\bm{\zeta}}^\varepsilon_\kappa) \tilde{\gamma}_{\alpha\beta} (\tilde{\bm{\eta}}) \sqrt{a} \dd y
	+\dfrac{\varepsilon^3}{3} \int_\omega a^{\alpha\beta\sigma\tau} \tilde{\rho}_{\sigma\tau}(\tilde{\bm{\zeta}}^\varepsilon_\kappa,\tilde{\bm{\varphi}}^\varepsilon_\kappa) \tilde{\rho}_{\alpha\beta} (\tilde{\bm{\eta}},\tilde{\bm{\xi}}) \sqrt{a} \dd y\\
	&\quad+\dfrac{\varepsilon}{\kappa}\int_{\omega} \left[\tilde{\bm{\varphi}}^\varepsilon_\kappa+(\partial_\alpha\tilde{\bm{\zeta}}^\varepsilon_\kappa\cdot\bm{a}_3)\bm{a}^\alpha\right] \cdot \left[\tilde{\bm{\xi}}+(\partial_\alpha \tilde{\bm{\eta}}\cdot\bm{a}_3)\bm{a}^\alpha\right] \dd y\\
	&\quad+\dfrac{\varepsilon}{\kappa}\int_{\omega}\left(-\{(\bm{\theta}+\tilde{\bm{\zeta}}^\varepsilon_\kappa)\cdot\bm{q}\}^{-} \bm{q}\right) \cdot \tilde{\bm{\eta}} \dd y= \int_\omega \tilde{\bm{p}}^{\varepsilon} \cdot \tilde{\bm{\eta}} \sqrt{a} \dd y,
	\end{aligned}
	\end{equation*}
	for all $(\tilde{\bm{\eta}},\tilde{\bm{\xi}}) \in \bm{H}^1_0(\omega) \times \bm{H}^1_0(\omega)$, where $\tilde{\bm{p}}^{\varepsilon}:=\varepsilon\left(\int_{-1}^{1} f^i \dd x_3\right)\bm{a}_i$.
	\bqed
\end{customprob}

The existence and uniqueness of solutions for Problem~\ref{problemK1} directly descends from an application of Lemma~3.3 of~\cite{Blouza2016}, Lemma~\ref{lem:beta}, and the Minty-Browder Theorem (cf., e.g., Theorem~9.14-1 of~\cite{PGCLNFAA}).
Moreover, the following properties hold.

\begin{theorem}
\label{t:convergence}
Let $(\tilde{\bm{\zeta}}^\varepsilon,\tilde{\bm{\varphi}}^\varepsilon) \in \tilde{\mathbb{U}}_K(\omega)$ be the solution of Problem~\ref{problemK1}, and let $(\tilde{\bm{\zeta}}^\varepsilon_\kappa,\tilde{\bm{\varphi}}^\varepsilon_\kappa) \in \mathbb{X}(\omega)$ be the solution of Problem~\ref{problem2-1}. Then, there exists $C>0$ independent of $\kappa$ such that
\begin{equation}
\label{conc-1}
\|\tilde{\bm{\varphi}}^\varepsilon_\kappa +(\partial_\alpha \tilde{\bm{\zeta}}^\varepsilon_\kappa \cdot\bm{a}_3)\bm{a}^\alpha\|_{\bm{L}^2(\omega)} \le C \sqrt{\kappa},
\end{equation}
and
\begin{equation}
\label{conc-2}
\|(\tilde{\bm{\zeta}}^\varepsilon,\tilde{\bm{\varphi}}^\varepsilon)-(\tilde{\bm{\zeta}}^\varepsilon_\kappa,\tilde{\bm{\varphi}}^\varepsilon_\kappa)\|_{\mathbb{X}(\omega)}:=\left\{\|\tilde{\bm{\zeta}}^\varepsilon-\tilde{\bm{\zeta}}^\varepsilon_\kappa\|_{\bm{H}^1_0(\omega)}^2 + \|\tilde{\bm{\varphi}}^\varepsilon-\tilde{\bm{\varphi}}^\varepsilon_\kappa\|_{\bm{H}^1_0(\omega)}^2\right\}^{1/2} \to 0,\quad\textup{ as }\kappa\to 0^+.
\end{equation}
\end{theorem}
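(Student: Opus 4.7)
The plan is to follow the same four-step penalty-method argument already used in the proof of Theorem~\ref{ex-un-kappa}: derive uniform (in $\kappa$) \emph{a priori} bounds by testing at the solution itself, extract a weakly convergent subsequence and show that its limit lies in $\tilde{\mathbb{U}}_K(\omega)$, identify that limit with $(\tilde{\bm{\zeta}}^\varepsilon, \tilde{\bm{\varphi}}^\varepsilon)$ via the uniqueness of the solution of Problem~\ref{problemK1}, and finally upgrade weak convergence to the strong convergence~\eqref{conc-2}.

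First I would test Problem~\ref{problem2-1} at $(\tilde{\bm{\eta}}, \tilde{\bm{\xi}}) = (\tilde{\bm{\zeta}}^\varepsilon_\kappa, \tilde{\bm{\varphi}}^\varepsilon_\kappa)$. The constraint-relaxation penalty produces exactly the nonnegative quantity $\tfrac{\varepsilon}{\kappa}\|\tilde{\bm{\varphi}}^\varepsilon_\kappa + (\partial_\alpha \tilde{\bm{\zeta}}^\varepsilon_\kappa \cdot \bm{a}_3)\bm{a}^\alpha\|_{\bm{L}^2(\omega)}^2$, and the obstacle penalty is bounded from below as in~\eqref{beta-2}. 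Combining these two nonnegative contributions with the coercivity of the unpenalized bilinear form (Lemma~11 of~\cite{BlouzaLeDret1999} together with Lemma~3.3 of~\cite{Blouza2016}) and using Cauchy--Schwarz on the right-hand side simultaneously yields a uniform bound for $\|(\tilde{\bm{\zeta}}^\varepsilon_\kappa, \tilde{\bm{\varphi}}^\varepsilon_\kappa)\|_{\mathbb{X}(\omega)}$, the estimate~\eqref{conc-1}, and the analogue $\|\{(\bm{\theta}+\tilde{\bm{\zeta}}^\varepsilon_\kappa)\cdot\bm{q}\}^{-}\|_{L^2(\omega)} \le C\sqrt{\kappa}$ of~\eqref{bdd-3}. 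This first step is the main obstacle: the unpenalized bilinear form is coercive only on the subspace $\bm{W}(\omega)$, so one must exploit the constraint-penalty term itself to measure the distance of $(\tilde{\bm{\zeta}}^\varepsilon_\kappa, \tilde{\bm{\varphi}}^\varepsilon_\kappa)$ from $\bm{W}(\omega)$ and then transfer coercivity across that distance, picking up only an $O(\sqrt{\kappa})$ correction that is absorbed by the right-hand side.

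Next, the Banach--Eberlein--Smulian theorem yields a subsequence with $(\tilde{\bm{\zeta}}^\varepsilon_\kappa, \tilde{\bm{\varphi}}^\varepsilon_\kappa) \rightharpoonup (\tilde{\bm{\zeta}}^*, \tilde{\bm{\varphi}}^*)$ in $\mathbb{X}(\omega)$. The estimate~\eqref{conc-1} combined with weak lower semicontinuity of the $\bm{L}^2$-norm forces $\tilde{\bm{\varphi}}^* + (\partial_\alpha \tilde{\bm{\zeta}}^* \cdot \bm{a}_3)\bm{a}^\alpha = \bm{0}$ a.e., while the compact embedding $\bm{H}^1_0(\omega) \hookrightarrow \bm{L}^2(\omega)$ together with the strong convergence $\{(\bm{\theta}+\tilde{\bm{\zeta}}^\varepsilon_\kappa)\cdot\bm{q}\}^{-} \to 0$ in $L^2(\omega)$ yields $(\bm{\theta}+\tilde{\bm{\zeta}}^*)\cdot\bm{q} \ge 0$ a.e.; hence $(\tilde{\bm{\zeta}}^*, \tilde{\bm{\varphi}}^*) \in \tilde{\mathbb{U}}_K(\omega)$. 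Testing Problem~\ref{problem2-1} at $(\tilde{\bm{\eta}} - \tilde{\bm{\zeta}}^\varepsilon_\kappa, \tilde{\bm{\xi}} - \tilde{\bm{\varphi}}^\varepsilon_\kappa)$ for any $(\tilde{\bm{\eta}}, \tilde{\bm{\xi}}) \in \tilde{\mathbb{U}}_K(\omega)$, the monotonicity of both penalty operators (Lemma~\ref{lem:beta} for the obstacle part and the obvious linear monotonicity for the constraint part) together with the fact that both penalty terms vanish at $(\tilde{\bm{\eta}}, \tilde{\bm{\xi}})$ ensures that their contributions have the favorable sign and can be dropped in the weak limit, producing the variational inequality of Problem~\ref{problemK1} for $(\tilde{\bm{\zeta}}^*, \tilde{\bm{\varphi}}^*)$; by uniqueness this forces $(\tilde{\bm{\zeta}}^*, \tilde{\bm{\varphi}}^*) = (\tilde{\bm{\zeta}}^\varepsilon, \tilde{\bm{\varphi}}^\varepsilon)$, and a sub-subsequence argument promotes the convergence to the whole family. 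Finally, \eqref{conc-2} is obtained by plugging the difference $(\tilde{\bm{\zeta}}^\varepsilon_\kappa - \tilde{\bm{\zeta}}^\varepsilon, \tilde{\bm{\varphi}}^\varepsilon_\kappa - \tilde{\bm{\varphi}}^\varepsilon)$ into the variational equations of Problem~\ref{problem2-1}, invoking the $\bm{W}(\omega)$-coercivity modulo the $O(\sqrt{\kappa})$ penalty correction provided by~\eqref{conc-1}, discarding the nonnegative penalty residuals by monotonicity, and observing that all remaining bilinear products tend to zero as $\kappa \to 0^+$ by the weak convergence established above and the vanishing of the penalty terms at $(\tilde{\bm{\zeta}}^\varepsilon, \tilde{\bm{\varphi}}^\varepsilon) \in \tilde{\mathbb{U}}_K(\omega)$, exactly as in the concluding chain of estimates in the proof of Theorem~\ref{ex-un-kappa}.
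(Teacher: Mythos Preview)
Your proposal is correct and follows essentially the same approach as the paper's proof: test at the solution to obtain uniform bounds and~\eqref{conc-1}, extract a weakly convergent subsequence, identify its limit in $\tilde{\mathbb{U}}_K(\omega)$ and pass to the limit in the variational inequality via monotonicity, then upgrade to strong convergence. The coercivity obstacle you flag is precisely the one the paper handles by combining the unpenalized bilinear form with the constraint-penalty term $\kappa^{-1}\|\tilde{\bm{\varphi}}^\varepsilon_\kappa + (\partial_\alpha\tilde{\bm{\zeta}}^\varepsilon_\kappa\cdot\bm{a}_3)\bm{a}^\alpha\|_{\bm{L}^2(\omega)}^2$ and invoking Lemma~3.3 of~\cite{Blouza2016} under the harmless assumption $\kappa^{-1}>1$, which yields uniform $\mathbb{X}(\omega)$-coercivity directly without any $O(\sqrt{\kappa})$ correction step.
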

\begin{proof}
Specialise $(\tilde{\bm{\zeta}}^\varepsilon_\kappa,\tilde{\bm{\varphi}}^\varepsilon_\kappa)$ in the equations of Problem~\ref{problem2-1}. We have that:
	\begin{equation}
	\label{wc--2}
	\begin{aligned}
	&\varepsilon\int_{\omega}a^{\alpha\beta\sigma\tau}  \tilde{\gamma}_{\sigma\tau}(\tilde{\bm{\zeta}}^\varepsilon_\kappa) \tilde{\gamma}_{\alpha\beta}(\tilde{\bm{\zeta}}^\varepsilon_\kappa) \sqrt{a} \dd y
	+\dfrac{\varepsilon^3}{3}\int_{\omega} a^{\alpha\beta\sigma\tau} \tilde{\rho}_{\sigma\tau}(\tilde{\bm{\zeta}}^\varepsilon_\kappa,\tilde{\bm{\varphi}}^\varepsilon_\kappa)
	\tilde{\rho}_{\alpha\beta}(\tilde{\bm{\zeta}}^\varepsilon_\kappa,\tilde{\bm{\varphi}}^\varepsilon_\kappa)\sqrt{a} \dd y\\
	&\quad+\dfrac{\varepsilon}{\kappa} \int_\omega \left(\tilde{\bm{\varphi}}^\varepsilon_\kappa+(\partial_\alpha\bm{\zeta}^\varepsilon_\kappa \cdot\bm{a}_3)\bm{a}^\alpha\right) \left(\tilde{\bm{\varphi}}^\varepsilon_\kappa+(\partial_\alpha\tilde{\bm{\zeta}}^\varepsilon_\kappa\cdot\bm{a}_3)\bm{a}^\alpha\right) \dd y\\
	&\quad+\dfrac{\varepsilon}{\kappa}\underbrace{\int_{\omega} \left(-\{(\bm{\theta}+\tilde{\bm{\zeta}}^\varepsilon_\kappa)\cdot\bm{q}\}^{-} \bm{q}\right) \cdot \tilde{\bm{\zeta}}^\varepsilon_\kappa \dd y}_{\ge 0 \textup{ by Lemma~\ref{lem:beta}}}
	=\int_{\omega}\tilde{\bm{p}}^\varepsilon \cdot \tilde{\bm{\zeta}}^\varepsilon_\kappa \sqrt{a} \dd y.
	\end{aligned}
	\end{equation}

An application of Lemma~\ref{lem:beta}, and the uniform positive-definiteness of the the fourth order two-dimensional elasticity tensor $(a^{\alpha\beta\sigma\tau})$ (cf., e.g., Theorem~3.3-2 of~\cite{Ciarlet2000}) to~\eqref{wc--2} gives that:
\begin{equation*}
\varepsilon\min\left\{1, \dfrac{\sqrt{a_0}}{c_e}\right\}\left\{\sum_{\alpha\beta}\left(\|\tilde{\gamma}_{\alpha\beta}(\tilde{\bm{\zeta}}^\varepsilon_\kappa)\|_{L^2(\omega)}^2 +\dfrac{\varepsilon^2}{3} \|\tilde{\rho}_{\alpha\beta}(\tilde{\bm{\zeta}}^\varepsilon_\kappa)\|_{L^2(\omega)}^2\right) +\dfrac{1}{\kappa}\|(\tilde{\bm{\varphi}}^\varepsilon_\kappa+\partial_\alpha\tilde{\bm{\zeta}}^\varepsilon_\kappa\bm{a}_3)\bm{a}^\alpha\|_{\bm{L}^2(\omega)}^2\right\} \le \|\tilde{\bm{p}}^\varepsilon\|_{\bm{L}^2(\omega)} \|\tilde{\bm{\zeta}}^\varepsilon_\kappa\|_{\bm{H}^1_0(\omega)}.
\end{equation*}

An application of Lemma~3.3 of~\cite{Blouza2016} and the fact that $\kappa^{-1}>1$ without loss of generality, gives that there exists a constant $\tilde{C}_0>0$ independent of $\varepsilon$ and $\kappa$ such that
\begin{equation}
\label{bound-0}
\tilde{C}_0^2\dfrac{\varepsilon^3}{3} \min\left\{1,\dfrac{\sqrt{a_0}}{c_e}\right\} \|(\tilde{\bm{\zeta}}^\varepsilon_\kappa,\tilde{\bm{\varphi}}^\varepsilon_\kappa)\|_{\mathbb{X}(\omega)}^2 \le \|\tilde{\bm{p}}^\varepsilon\|_{\bm{L}^2(\omega)} \|(\tilde{\bm{\zeta}}^\varepsilon_\kappa,\tilde{\bm{\varphi}}^\varepsilon_\kappa)\|_{\mathbb{X}(\omega)},
\end{equation}
which in turn implies that the sequence
$\{(\tilde{\bm{\zeta}}^\varepsilon_\kappa,\tilde{\bm{\varphi}}^\varepsilon_\kappa)\}_{\kappa>0}$ is bounded in $\mathbb{X}(\omega)$ independently of $\kappa$.

More precisely, the estimate~\eqref{bound-0} implies that:
\begin{equation*}
\label{bound-1}
\|(\tilde{\bm{\varphi}}^\varepsilon_\kappa+\partial_\alpha\tilde{\bm{\zeta}}^\varepsilon_\kappa\cdot\bm{a}_3)\bm{a}^\alpha\|_{\bm{L}^2(\omega)}^2
\le \dfrac{3\kappa}{\varepsilon^4\tilde{C}_0^2\min\{1,c_e^{-1}\sqrt{a_0}\}}\|\tilde{\bm{p}}^\varepsilon\|_{\bm{L}^2(\omega)}^2,
\end{equation*}
so that the estimate~\eqref{conc-1} is immediately verified. 

Moreover, 
\begin{equation}
\label{wc--1}
\left\|-\{(\bm{\theta}+\tilde{\bm{\zeta}}^\varepsilon_\kappa)\cdot\bm{q}\}^{-} \bm{q}\right\|_{\bm{L}^2(\omega)}^2 
\le \dfrac{3\kappa}{\varepsilon^4\tilde{C}_0^2\min\{1,c_e^{-1}\sqrt{a_0}\}} \|\tilde{\bm{p}}^\varepsilon\|_{\bm{L}^2(\omega)}^2.
\end{equation}

In view of~\eqref{bound-0}, we have that the following weak convergences hold up to passing to a suitable subsequence:
\begin{equation}
\label{wc-0}
\begin{aligned}
\tilde{\bm{\zeta}}^\varepsilon_\kappa &\rightharpoonup \tilde{\bm{\zeta}}^\varepsilon_1,\quad\textup{ in }\bm{H}^1_0(\omega),\\
\tilde{\bm{\varphi}}^\varepsilon_\kappa &\rightharpoonup \tilde{\bm{\varphi}}^\varepsilon_1,\quad\textup{ in }\bm{H}^1_0(\omega).
\end{aligned}
\end{equation}

An application of the Rellich-Kondra\v{s}ov theorem (cf., e.g., Theorem~6.6-3 of~\cite{PGCLNFAA}) gives that $\tilde{\bm{\zeta}}^\varepsilon_\kappa \to \tilde{\bm{\zeta}}^\varepsilon_1$ in $\bm{L}^2(\omega)$. A subsequent application of Theorem~9.13-2(a) of~\cite{PGCLNFAA} thus gives that 
\begin{equation*}
\{(\bm{\theta}+\tilde{\bm{\zeta}}^\varepsilon_1)\cdot\bm{q}\}^{-}=0,\quad\textup{ a.e. in }\omega.
\end{equation*}

Similarly, another application of the Rellich-Kondra\v{s}ov theorem (cf., e.g., Theorem~6.6-3 of~\cite{PGCLNFAA}) gives that $\tilde{\bm{\varphi}}^\varepsilon_\kappa \to \tilde{\bm{\varphi}}^\varepsilon_1$ in $\bm{L}^2(\omega)$. Combining~\eqref{conc-1} with the sequential weak lower semicontinuity of the norm $\|\cdot\|_{\bm{L}^2(\omega)}$ gives that $\tilde{\bm{\varphi}}^\varepsilon_1=-(\partial_\alpha\tilde{\bm{\zeta}}^\varepsilon_1\cdot\bm{a}_3)\bm{a}^\alpha$ a.e. in $\omega$, so that $(\tilde{\bm{\zeta}}^\varepsilon_1,\tilde{\bm{\varphi}}^\varepsilon_1) \in \mathbb{U}_K(\omega)$.

Let $(\tilde{\bm{\eta}},\tilde{\bm{\xi}}) \in \tilde{\mathbb{U}}_K(\omega)$, and specialise $(\tilde{\bm{\eta}}-\tilde{\bm{\zeta}}^\varepsilon_\kappa,\tilde{\bm{\xi}}-\tilde{\bm{\varphi}}^\varepsilon_\kappa)$ in the equations of Problem~\ref{problem2-1}. We have that:
\begin{equation}
\label{wc-1}
\begin{aligned}
&\varepsilon\int_{\omega}a^{\alpha\beta\sigma\tau}  \tilde{\gamma}_{\sigma\tau}(\tilde{\bm{\zeta}}^\varepsilon_\kappa) \tilde{\gamma}_{\alpha\beta}(\tilde{\bm{\eta}}-\tilde{\bm{\zeta}}^\varepsilon_\kappa) \sqrt{a} \dd y
+\dfrac{\varepsilon^3}{3}\int_{\omega} a^{\alpha\beta\sigma\tau} \tilde{\rho}_{\sigma\tau}(\tilde{\bm{\zeta}}^\varepsilon_\kappa,\tilde{\bm{\varphi}}^\varepsilon_\kappa)
\tilde{\rho}_{\alpha\beta}(\tilde{\bm{\eta}}-\tilde{\bm{\zeta}}^\varepsilon_\kappa,\tilde{\bm{\xi}}-\tilde{\bm{\varphi}}^\varepsilon_\kappa)\sqrt{a} \dd y\\
&\quad+\dfrac{\varepsilon}{\kappa} \int_\omega \left(\tilde{\bm{\varphi}}^\varepsilon_\kappa+(\partial_\alpha\bm{\zeta}^\varepsilon_\kappa \cdot\bm{a}_3)\bm{a}^\alpha\right) \left((\tilde{\bm{\xi}}-\tilde{\bm{\varphi}}^\varepsilon_\kappa)+(\partial_\alpha(\tilde{\bm{\eta}}-\tilde{\bm{\zeta}}^\varepsilon_\kappa)\cdot\bm{a}_3)\bm{a}^\alpha\right) \dd y\\
&\quad+\dfrac{\varepsilon}{\kappa}\underbrace{\int_{\omega} \left(-\{(\bm{\theta}+\tilde{\bm{\zeta}}^\varepsilon_\kappa)\cdot\bm{q}\}^{-} \bm{q}\right) \cdot (\tilde{\bm{\eta}} -\tilde{\bm{\zeta}}^\varepsilon_\kappa)\dd y}_{\le 0 \textup{ by Lemma~\ref{lem:beta}}}
=\int_{\omega}\tilde{\bm{p}}^\varepsilon \cdot (\tilde{\bm{\eta}}-\tilde{\bm{\zeta}}^\varepsilon_\kappa) \sqrt{a} \dd y.
\end{aligned}
\end{equation}

Observe that the definition~\eqref{rho-alt} gives at once:
\begin{equation*}
	\label{linearity}
	\begin{aligned}
		&\tilde{\rho}_{\alpha\beta}((\tilde{\bm{\eta}}_1,\tilde{\bm{\xi}}_1) \pm (\tilde{\bm{\eta}}_2,\tilde{\bm{\xi}}_2)) = \tilde{\rho}_{\alpha\beta}(\tilde{\bm{\eta}}_1\pm \tilde{\bm{\eta}}_2,\tilde{\bm{\xi}}_1\pm \tilde{\bm{\xi}}_2) =
		\tilde{\rho}_{\alpha\beta}(\tilde{\bm{\eta}}_1,\tilde{\bm{\xi}}_1) \pm \tilde{\rho}_{\alpha\beta}(\tilde{\bm{\eta}}_2,\tilde{\bm{\xi}}_2),\\
		&\tilde{\rho}_{\alpha\beta}(c(\tilde{\bm{\eta}},\tilde{\bm{\xi}}))=c\tilde{\rho}_{\alpha\beta}(\tilde{\bm{\eta}},\tilde{\bm{\xi}}),
	\end{aligned}
\end{equation*}
for all $(\tilde{\bm{\eta}},\tilde{\bm{\xi}}) \in \bm{H}^1(\omega) \times \bm{H}^1(\omega)$, and for all $c \in \mathbb{R}$.
Applying the fact that $(\tilde{\bm{\eta}},\tilde{\bm{\xi}}) \in \tilde{\mathbb{U}}_K(\omega)$ to~\eqref{wc-1} gives:
\begin{equation}
\label{wc-2}
\begin{aligned}
&\varepsilon\int_{\omega}a^{\alpha\beta\sigma\tau}  \tilde{\gamma}_{\sigma\tau}(\tilde{\bm{\zeta}}^\varepsilon_\kappa) \tilde{\gamma}_{\alpha\beta}(\tilde{\bm{\eta}}-\tilde{\bm{\zeta}}^\varepsilon_\kappa) \sqrt{a} \dd y
+\dfrac{\varepsilon^3}{3}\int_{\omega} a^{\alpha\beta\sigma\tau} \tilde{\rho}_{\sigma\tau}(\tilde{\bm{\zeta}}^\varepsilon_\kappa,\tilde{\bm{\varphi}}^\varepsilon_\kappa)
\tilde{\rho}_{\alpha\beta}(\tilde{\bm{\eta}},\tilde{\bm{\xi}})\sqrt{a} \dd y\\
&\quad-\dfrac{\varepsilon^3}{3}\int_{\omega} a^{\alpha\beta\sigma\tau} \tilde{\rho}_{\sigma\tau}(\tilde{\bm{\zeta}}^\varepsilon_\kappa,\tilde{\bm{\varphi}}^\varepsilon_\kappa)
\tilde{\rho}_{\alpha\beta}(\tilde{\bm{\zeta}}^\varepsilon_\kappa,\tilde{\bm{\varphi}}^\varepsilon_\kappa)\sqrt{a} \dd y\ge \int_{\omega}\tilde{\bm{p}}^\varepsilon \cdot (\tilde{\bm{\eta}}-\tilde{\bm{\zeta}}^\varepsilon_\kappa) \sqrt{a} \dd y.
\end{aligned}
\end{equation}

The continuity of linear form on the right-hand side of~\eqref{wc-2}, the continuity of the bilinear forms on the left-hand side of~\eqref{wc-2}, and~\eqref{wc-0} allow us to pass to the $\limsup_{\kappa\to 0^+}$ in~\eqref{wc-2}, getting
\begin{equation*}
\label{wc-3}
\begin{aligned}
&\varepsilon\int_{\omega}a^{\alpha\beta\sigma\tau}  \tilde{\gamma}_{\sigma\tau}(\tilde{\bm{\zeta}}^\varepsilon_1) \tilde{\gamma}_{\alpha\beta}(\tilde{\bm{\eta}}-\tilde{\bm{\zeta}}^\varepsilon_1) \sqrt{a} \dd y
+\dfrac{\varepsilon^3}{3}\int_{\omega} a^{\alpha\beta\sigma\tau} \tilde{\rho}_{\sigma\tau}(\tilde{\bm{\zeta}}^\varepsilon_1,\tilde{\bm{\varphi}}^\varepsilon_1)
\tilde{\rho}_{\alpha\beta}(\tilde{\bm{\eta}}-\tilde{\bm{\zeta}}^\varepsilon_1,\tilde{\bm{\xi}}-\tilde{\bm{\varphi}}^\varepsilon_1)\sqrt{a} \dd y\\
&\ge \int_{\omega}\tilde{\bm{p}}^\varepsilon \cdot (\tilde{\bm{\eta}}-\tilde{\bm{\zeta}}^\varepsilon_1) \sqrt{a} \dd y,
\end{aligned}
\end{equation*}
for all $(\tilde{\bm{\eta}},\tilde{\bm{\xi}}) \in \tilde{\mathbb{U}}_K(\omega)$.
Hence, we have that $(\tilde{\bm{\zeta}}^\varepsilon_1,\tilde{\bm{\varphi}}^\varepsilon_1) \in \tilde{\mathbb{U}}_K(\omega)$ and thus coincides with the unique solution of Problem~\ref{problemK1}.

Thanks to Lemma~3.3 in~\cite{Blouza2016}, Lemma~\ref{lem:beta}, the uniform positive-definiteness of the fourth order two-dimensional elasticity tensor $(a^{\alpha\beta\sigma\tau})$ (cf., e.g., Theorem~3.3-2 of~\cite{Ciarlet2000}), the fact that $(\tilde{\bm{\zeta}}^\varepsilon,\tilde{\bm{\varphi}}^\varepsilon) \in \tilde{\mathbb{U}}_K(\omega)$ and~\eqref{wc-0}, we have that:
\begin{equation*}
\begin{aligned}
&\tilde{C}_0^2 \dfrac{\varepsilon^3}{3 c_e} \|(\tilde{\bm{\zeta}}^\varepsilon_\kappa,\tilde{\bm{\varphi}}^\varepsilon_\kappa)-(\tilde{\bm{\zeta}}^\varepsilon,\tilde{\bm{\varphi}}^\varepsilon)\|_{\mathbb{X}(\omega)}^2
\le \varepsilon\int_{\omega}a^{\alpha\beta\sigma\tau}  \tilde{\gamma}_{\sigma\tau}(\tilde{\bm{\zeta}}^\varepsilon_\kappa-\tilde{\bm{\zeta}}^\varepsilon) \tilde{\gamma}_{\alpha\beta}(\tilde{\bm{\zeta}}^\varepsilon_\kappa-\tilde{\bm{\zeta}}^\varepsilon) \sqrt{a} \dd y\\
&\quad +\dfrac{\varepsilon^3}{3}\int_{\omega} a^{\alpha\beta\sigma\tau} \tilde{\rho}_{\sigma\tau}(\tilde{\bm{\zeta}}^\varepsilon_\kappa-\tilde{\bm{\zeta}}^\varepsilon,\tilde{\bm{\varphi}}^\varepsilon_\kappa-\tilde{\bm{\varphi}}^\varepsilon)
\tilde{\rho}_{\alpha\beta}(\tilde{\bm{\zeta}}^\varepsilon_\kappa-\tilde{\bm{\zeta}}^\varepsilon,\tilde{\bm{\varphi}}^\varepsilon_\kappa-\tilde{\bm{\varphi}}^\varepsilon)\sqrt{a} \dd y\\
&\quad+\dfrac{\varepsilon}{\kappa} \int_\omega \left|(\tilde{\bm{\varphi}}^\varepsilon_\kappa-\tilde{\bm{\varphi}}^\varepsilon)+\left(\partial_\alpha(\tilde{\bm{\zeta}}^\varepsilon_\kappa-\tilde{\bm{\zeta}}^\varepsilon)\cdot\bm{a}^3\right)\bm{a}^\alpha\right|^2 \dd y + \dfrac{\varepsilon}{\kappa}\int_{\omega} \left(-\{(\bm{\theta}+\tilde{\bm{\zeta}}^\varepsilon_\kappa)\cdot\bm{q}\}^{-} \bm{q}\right) \cdot (\tilde{\bm{\zeta}}^\varepsilon_\kappa -\tilde{\bm{\zeta}}^\varepsilon)\dd y\\
&=\varepsilon\int_{\omega}a^{\alpha\beta\sigma\tau}  \tilde{\gamma}_{\sigma\tau}(\tilde{\bm{\zeta}}^\varepsilon_\kappa) \tilde{\gamma}_{\alpha\beta}(\tilde{\bm{\zeta}}^\varepsilon_\kappa-\tilde{\bm{\zeta}}^\varepsilon) \sqrt{a} \dd y
-\varepsilon\int_{\omega}a^{\alpha\beta\sigma\tau}  \tilde{\gamma}_{\sigma\tau}(\tilde{\bm{\zeta}}^\varepsilon) \tilde{\gamma}_{\alpha\beta}(\tilde{\bm{\zeta}}^\varepsilon_\kappa) \sqrt{a} \dd y\\
&\quad+\varepsilon\int_{\omega}a^{\alpha\beta\sigma\tau}  \tilde{\gamma}_{\sigma\tau}(\tilde{\bm{\zeta}}^\varepsilon) \tilde{\gamma}_{\alpha\beta}(\tilde{\bm{\zeta}}^\varepsilon) \sqrt{a} \dd y
+\dfrac{\varepsilon^3}{3}\int_{\omega} a^{\alpha\beta\sigma\tau} \tilde{\rho}_{\sigma\tau}(\tilde{\bm{\zeta}}^\varepsilon_\kappa,\tilde{\bm{\varphi}}^\varepsilon_\kappa)
\tilde{\rho}_{\alpha\beta}(\tilde{\bm{\zeta}}^\varepsilon_\kappa-\tilde{\bm{\zeta}}^\varepsilon,\tilde{\bm{\varphi}}^\varepsilon_\kappa-\tilde{\bm{\varphi}}^\varepsilon)\sqrt{a} \dd y\\
&\quad-\dfrac{\varepsilon^3}{3}\int_{\omega} a^{\alpha\beta\sigma\tau} \tilde{\rho}_{\sigma\tau}(\tilde{\bm{\zeta}}^\varepsilon,\tilde{\bm{\varphi}}^\varepsilon)
\tilde{\rho}_{\alpha\beta}(\tilde{\bm{\zeta}}^\varepsilon_\kappa-\tilde{\bm{\zeta}}^\varepsilon,\tilde{\bm{\varphi}}^\varepsilon_\kappa-\tilde{\bm{\varphi}}^\varepsilon)\sqrt{a} \dd y
+\dfrac{\varepsilon}{\kappa}\int_{\omega}\left|\tilde{\bm{\varphi}}^\varepsilon_\kappa+(\partial_\alpha \tilde{\bm{\zeta}}^\varepsilon_\kappa \cdot\bm{a}_3)\bm{a}^\alpha\right|^2 \dd y\\
&\quad+ \dfrac{\varepsilon}{\kappa}\int_{\omega} \left(-\{(\bm{\theta}+\tilde{\bm{\zeta}}^\varepsilon_\kappa)\cdot\bm{q}\}^{-} \bm{q}\right) \cdot (\tilde{\bm{\zeta}}^\varepsilon_\kappa -\tilde{\bm{\zeta}}^\varepsilon)\dd y\\
&=\int_{\omega}\tilde{\bm{p}}^\varepsilon \cdot (\tilde{\bm{\zeta}}^\varepsilon_\kappa -\tilde{\bm{\zeta}}^\varepsilon) \sqrt{a} \dd y
-\varepsilon\int_{\omega}a^{\alpha\beta\sigma\tau}  \tilde{\gamma}_{\sigma\tau}(\tilde{\bm{\zeta}}^\varepsilon) \tilde{\gamma}_{\alpha\beta}(\tilde{\bm{\zeta}}^\varepsilon_\kappa) \sqrt{a} \dd y
+\varepsilon\int_{\omega}a^{\alpha\beta\sigma\tau}  \tilde{\gamma}_{\sigma\tau}(\tilde{\bm{\zeta}}^\varepsilon) \tilde{\gamma}_{\alpha\beta}(\tilde{\bm{\zeta}}^\varepsilon) \sqrt{a} \dd y\\
&\quad-\dfrac{\varepsilon^3}{3}\int_{\omega} a^{\alpha\beta\sigma\tau} \tilde{\rho}_{\sigma\tau}(\tilde{\bm{\zeta}}^\varepsilon,\tilde{\bm{\varphi}}^\varepsilon)
\tilde{\rho}_{\alpha\beta}(\tilde{\bm{\zeta}}^\varepsilon_\kappa-\tilde{\bm{\zeta}}^\varepsilon,\tilde{\bm{\varphi}}^\varepsilon_\kappa-\tilde{\bm{\varphi}}^\varepsilon)\sqrt{a} \dd y\to 0, \quad\textup{ as }\kappa\to 0^+,
\end{aligned}
\end{equation*}
thus proving the desired strong convergence~\eqref{conc-2}.
\end{proof}

Now, define
\begin{align*}
\tilde{n}^{\alpha\beta,\varepsilon}_\kappa&:=\varepsilon a^{\alpha\beta\sigma\tau}\tilde{\gamma}_{\sigma\tau}(\tilde{\bm{\zeta}}^\varepsilon_\kappa),\\
\tilde{m}^{\alpha\beta,\varepsilon}_\kappa&:=\dfrac{\varepsilon^3}{3} a^{\alpha\beta\sigma\tau} \tilde{\rho}_{\sigma\tau}(\tilde{\bm{\zeta}}^\varepsilon_\kappa,\tilde{\bm{\varphi}}^\varepsilon_\kappa),\\
\tilde{\bm{\beta}}(\tilde{\bm{\zeta}}^\varepsilon_\kappa)&:=-\{(\bm{\theta}+\tilde{\bm{\zeta}}^\varepsilon_\kappa)\cdot\bm{q}\}^{-} \bm{q},\\
\tilde{\bm{q}}(\tilde{\bm{\zeta}}^\varepsilon_\kappa,\tilde{\bm{\varphi}}^\varepsilon_\kappa)&:=\tilde{\bm{\varphi}}^\varepsilon_\kappa+(\partial_\alpha\tilde{\bm{\zeta}}^\varepsilon_\kappa\cdot\bm{a}_3)\bm{a}^\alpha.
\end{align*}

In the same spirit of~\cite{Blouza2016}, we observe that the boundary value problem associated with Problem~\ref{problem2-1} takes the following form:
\begin{equation}
\label{BVP-mixed}
\begin{cases}
-\partial_\alpha\left(\tilde{n}^{\alpha\beta,\varepsilon}_\kappa \bm{a}_\beta \sqrt{a}+\tilde{m}^{\alpha\beta,\varepsilon}_\kappa (\partial_\beta \bm{a}_3) \sqrt{a}+\dfrac{\varepsilon}{\kappa\sqrt{a}} \tilde{\bm{q}}(\tilde{\bm{\zeta}}^\varepsilon_\kappa,\tilde{\bm{\varphi}}^\varepsilon_\kappa)\cdot\bm{a}^\alpha \bm{a}_3\right)+\dfrac{\varepsilon}{\kappa \sqrt{a}}\tilde{\bm{\beta}}(\tilde{\bm{\zeta}}^\varepsilon_\kappa)=\tilde{\bm{p}}^\varepsilon \sqrt{a}, \textup{ in }\omega,\\
\\
-\partial_\alpha(\tilde{m}^{\alpha\beta,\varepsilon}_\kappa (\partial_\beta \bm{a}_3) \sqrt{a})+\dfrac{\varepsilon}{\kappa\sqrt{a}}\tilde{\bm{q}}(\tilde{\bm{\zeta}}^\varepsilon_\kappa,\tilde{\bm{\varphi}}^\varepsilon_\kappa)=\bm{0}, \textup{ in }\omega,\\
\\
\tilde{\bm{\zeta}}^\varepsilon_\kappa = \tilde{\bm{\varphi}}^\varepsilon_\kappa =\bm{0}, \textup{ on }\gamma.
\end{cases}
\end{equation}

Finally, thanks to~\eqref{BVP-mixed}, we show that the solution $(\tilde{\bm{\zeta}}^\varepsilon_\kappa,\tilde{\bm{\varphi}}^\varepsilon_\kappa)$ of Problem~\ref{problem2-1} is also of class $(\bm{H}^2(\omega) \cap \bm{H}^1_0(\omega)) \times (\bm{H}^2(\omega) \cap \bm{H}^1_0(\omega))$ up to the boundary. Since the proof follows the same pattern as the proofs of Theorem~\ref{aug:int} and Theorem~\ref{aug:bdry}, we just limit ourselves to sketch it.

\begin{theorem}
\label{regularity}
Assume that the boundary $\gamma$ of the domain $\omega$ is of class $\mathcal{C}^4$ and that $\bm{\theta} \in \mathcal{C}^4(\overline{\omega};\mathbb{E}^3)$.
Assume that there exists a unit-norm vector $\bm{q} \in \mathbb{E}^3$ such that
\begin{equation*}
\min_{y \in \overline{\omega}} (\bm{\theta} (y) \cdot \bm{q}) > 0 \quad
\textup{ and } \quad
\min_{y \in \overline{\omega}} (\bm{a}_3 (y) \cdot \bm{q}) > 0.
\end{equation*}

Assume also that the vector field $\bm{f}^\varepsilon=(f^{i,\varepsilon})$ defining the applied body force density is such that $\bm{p}^\varepsilon=(p^{i,\varepsilon}) \in H^1(\omega) \times H^1(\omega) \times L^2(\omega)$.

Then, the solution $(\tilde{\bm{\zeta}}^\varepsilon_\kappa,\tilde{\bm{\varphi}}^\varepsilon_\kappa)$ of Problem~\ref{problem2-1} is also of class $(\bm{H}^2(\omega) \cap \bm{H}^1_0(\omega)) \times (\bm{H}^2(\omega) \cap \bm{H}^1_0(\omega))$
\end{theorem}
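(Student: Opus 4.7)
The plan is to transpose the finite-difference-quotient scheme of Theorems~\ref{aug:int} and~\ref{aug:bdry} to the mixed system~\ref{problem2-1}, treating $(\tilde{\bm{\zeta}}^\varepsilon_\kappa,\tilde{\bm{\varphi}}^\varepsilon_\kappa)$ as a single unknown in $\mathbb{X}(\omega)$. The role played by the Korn inequality of Theorem~\ref{kornsurface} in the proof of Theorem~\ref{aug:int} is now played by the coercivity estimate of Lemma~3.3 of~\cite{Blouza2016}, which asserts that the quadratic form
\[
\sum_{\alpha,\beta}\|\tilde{\gamma}_{\alpha\beta}(\tilde{\bm{\eta}})\|_{L^2(\omega)}^2+\sum_{\alpha,\beta}\|\tilde{\rho}_{\alpha\beta}(\tilde{\bm{\eta}},\tilde{\bm{\xi}})\|_{L^2(\omega)}^2+\tfrac{1}{\kappa}\|\tilde{\bm{\xi}}+(\partial_\alpha\tilde{\bm{\eta}}\cdot\bm{a}_3)\bm{a}^\alpha\|_{\bm{L}^2(\omega)}^2
\]
controls the full squared $\mathbb{X}(\omega)$-norm of $(\tilde{\bm{\eta}},\tilde{\bm{\xi}})$ by a constant independent of $\kappa$.

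For the interior estimate, I would fix $\varphi\in\mathcal{D}(\omega)$ with $\textup{supp }\varphi\subset\subset\omega_1$ as in~\eqref{sets} and test Problem~\ref{problem2-1} with the admissible pair $(-\varphi\delta_{\rho h}(\varphi\tilde{\bm{\zeta}}^\varepsilon_\kappa),-\varphi\delta_{\rho h}(\varphi\tilde{\bm{\varphi}}^\varepsilon_\kappa))\in\bm{H}^1_0(\omega)\times\bm{H}^1_0(\omega)$. Using the commutator identities~\eqref{D+}, \eqref{D-}, \eqref{delta+} as in the derivation of~\eqref{key-relation-3}, each quadratic term rewrites as a square of $D_{\rho h}$ applied to the test pair, plus lower-order commutator residuals absorbable by Young's inequality. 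The obstacle-penalty contribution is treated verbatim as in the chain of identities leading to~\eqref{checkpoint-2}, with pivotal inputs Lemma~\ref{fdq-neg-part} and the $\sqrt{\kappa}$-bound~\eqref{wc--1}. The constraint-penalty contribution, specific to the mixed setting, produces after the application of $\delta_{\rho h}$ cross terms with a $\kappa^{-1}$ prefactor that are nevertheless controlled through the \emph{a priori} bound~\eqref{conc-1} of Theorem~\ref{t:convergence}. Assembling and absorbing yields an $h$-independent estimate on $\|D_{\rho h}(\varphi(\tilde{\bm{\zeta}}^\varepsilon_\kappa,\tilde{\bm{\varphi}}^\varepsilon_\kappa))\|_{\mathbb{X}(\omega)}$, so that Theorem~3 in Section~5.8.2 of~\cite{Evans2010} delivers $(\tilde{\bm{\zeta}}^\varepsilon_\kappa,\tilde{\bm{\varphi}}^\varepsilon_\kappa)\in\bm{H}^2_{\textup{loc}}(\omega)\times\bm{H}^2_{\textup{loc}}(\omega)$.

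The extension to a neighbourhood of $\gamma$ follows the same pattern as Theorem~\ref{aug:bdry}: thanks to the $\mathcal{C}^2$-regularity of $\gamma$ and the $\mathcal{C}^4$-regularity of $\bm{\theta}$, a finite atlas flattens a neighbourhood of $\gamma$ through $\mathcal{C}^2$-diffeomorphisms, and one uses \emph{tangential} finite difference quotients in each chart. These preserve the homogeneous Dirichlet condition shared by both components of the test pair, so the interior argument of the previous paragraph reproduces and yields uniform $L^2$-control of all tangential second derivatives of $(\tilde{\bm{\zeta}}^\varepsilon_\kappa,\tilde{\bm{\varphi}}^\varepsilon_\kappa)$ up to $\gamma$. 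Since the principal part of the Euler-Lagrange operator associated with Problem~\ref{problem2-1} is elliptic in non-divergence form (by the coercivity of Lemma~3.3 of~\cite{Blouza2016}, in the spirit of Theorem~4.5-1(b) of~\cite{Ciarlet2000}), the normal second derivatives are then solved for algebraically in terms of the tangential second derivatives, lower-order contributions, the datum $\tilde{\bm{p}}^\varepsilon\in L^2(\omega)\times L^2(\omega)\times H^1(\omega)$, and the two penalty terms, both uniformly bounded in $\bm{L}^2(\omega)$ thanks to~\eqref{conc-1} and~\eqref{wc--1}. This yields the announced $\bm{H}^2(\omega)\cap\bm{H}^1_0(\omega)$-regularity for each component.

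The main technical obstacle lies in the coupling between $\tilde{\bm{\zeta}}^\varepsilon_\kappa$ and $\tilde{\bm{\varphi}}^\varepsilon_\kappa$: unlike in Theorem~\ref{aug:int}, the constraint penalty mixes $\tilde{\bm{\varphi}}^\varepsilon_\kappa$ with first tangential derivatives of $\tilde{\bm{\zeta}}^\varepsilon_\kappa$, so applying $\delta_{\rho h}$ generates commutators of the form $\kappa^{-1}(\tilde{\bm{\varphi}}^\varepsilon_\kappa+(\partial_\alpha\tilde{\bm{\zeta}}^\varepsilon_\kappa\cdot\bm{a}_3)\bm{a}^\alpha)\cdot D_{\rho h}(\cdot)$ with an a priori threatening $\kappa^{-1}$ prefactor. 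The estimate~\eqref{conc-1}, according to which the combination $\tilde{\bm{\varphi}}^\varepsilon_\kappa+(\partial_\alpha\tilde{\bm{\zeta}}^\varepsilon_\kappa\cdot\bm{a}_3)\bm{a}^\alpha$ is itself of order $\sqrt{\kappa}$ in $\bm{L}^2(\omega)$, reduces these cross terms to contributions of order $1/\sqrt{\kappa}$, exactly matching the mechanism by which~\eqref{bdd-3} controlled the obstacle-penalty commutators in the proof of Theorem~\ref{aug:int}; no further analytic input is needed beyond this observation and Lemma~3.3 of~\cite{Blouza2016}.
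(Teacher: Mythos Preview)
Your proposal is correct and follows essentially the same approach as the paper. For the interior estimate the paper simply says ``the same pattern as Theorem~\ref{aug:int}'', and your elaboration of how the constraint-penalty term is absorbed via~\eqref{conc-1} (mirroring the role of~\eqref{bdd-3} for the obstacle penalty) is exactly the missing detail that pattern requires. For the boundary, the paper takes a shortcut: rather than running tangential difference quotients and recovering the normal second derivatives algebraically, it observes that the bilinear form is uniformly strongly elliptic in the sense of Ne\v{c}as and invokes Lemma~3.2 on page~263 of~\cite{Nec67} as a black box, with~\eqref{wc--1} providing the $L^2$ control on the nonlinear lower-order term. Your hands-on route is the standard Agmon--Douglis--Nirenberg mechanism that Ne\v{c}as's lemma packages, so the two arguments are equivalent in substance; the only caveat is that your ``solve algebraically for the normal second derivatives'' step requires pointwise ellipticity of the principal symbol (the Legendre--Hadamard condition for the coupled system), which does not follow from coercivity alone and is precisely what the paper secures by appealing to the uniformly-strongly-elliptic framework of~\cite{Nec67}.
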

\begin{proof}
The proof of the higher regularity in the interior follows the same pattern as the proof of Theorem~\ref{aug:int}.

For what concerns the regularity up to the boundary, note that the estimate~\eqref{wc--1} and the fact that the bilinear form is \emph{uniformly strongly elliptic} in the sense of page~185 of~\cite{Nec67} put us in the position to apply Lemma~3.2 on page~263 of~\cite{Nec67}, which in turn gives that the solution $(\tilde{\bm{\zeta}}^\varepsilon_\kappa,\tilde{\bm{\varphi}}^\varepsilon_\kappa)$ of Problem~\ref{problem2-1} is of class $(\bm{H}^2(\omega) \cap \bm{H}^1_0(\omega)) \times (\bm{H}^2(\omega) \cap \bm{H}^1_0(\omega))$.
\end{proof}

\section{Numerical approximation of the solution of Problem~\ref{problem2-1} via the Finite Element Method}
\label{approx:penalty}

In this section we present a suitable Finite Element Method to approximate the solution to Problem~\ref{problem2-1}. 
Following~\cite{Bartels2016} and~\cite{PGCFEM}, we recall some basic terminology and definitions. 
In what follows the letter $h$ denotes a quantity approaching zero. For brevity, the same notation $C$ (with or without subscripts) designates a positive constant independent of $\varepsilon$, $\kappa$ and $h$, which can take different values at different places.
We denote by $(\mathcal{T}_h)_{h>0}$ a \emph{family of triangulations of the polygonal domain} $\overline{\omega}$ made of triangles and we let $T$ denote any element of such a family.
Let us first recall the \emph{rigorous} definition of \emph{finite element} in $\mathbb{R}^n$, where $n \ge 1$ is an integer. A \emph{finite element} in $\mathbb{R}^n$ is a \emph{triple} $(T,P, \mathcal{N})$ where:

(i) $T$ is a closed subset of $\mathbb{R}^n$ with non-empty interior and Lipschitz-continuous boundary,

(ii) $P$ is a finite dimensional space of real-valued functions defined over $T$,

(iii) $\mathcal{N}$ is is a finite set of linearly independent linear forms $N_i$, $1 \le i \le \dim P$, defined over the space $P$.

By definition, it is assumed that the set $\mathcal{N}$ is \emph{$P$-unisolvent} in the following sense: given any real scalars $\alpha_i$, $1\le i \le \dim P$, there exists a unique function $g \in P$ which satisfies
$$
N_i(g)=\alpha_i, \quad 1 \le i \le \dim P.
$$

It is henceforth assumed that the \emph{degrees of freedom}, $N_i$ , lie in the dual space of a function space larger than $P$ like, for instance, a Sobolev space.
For brevity we shall conform our terminology to the one of~\cite{PGCFEM}, calling the sole set $T$ a finite element.
Define the \emph{diameter} of any finite element $T$ as follows:
$$
h_T=\text{diam }T:= \max_{x,y \in T} |x-y|.
$$

Let us also define
$$
\rho_T:=\sup\{\text{diam }B; B \textup{ is a ball contained in }T\}.
$$

A triangulation $\mathcal{T}_h$ is said to be \emph{regular} (cf., e.g., \cite{PGCFEM}) if:

(i) There exists a constant $\sigma>0$, independent of $h$, such that
$$
\textup{for all }T \in \mathcal{T}_h,\quad \dfrac{h_T}{\rho_T} \le \sigma.
$$

(ii) The quantity $h:=\max\{h_T>0; T \in \mathcal{T}_h\} $ approaches zero.

A triangulation $\mathcal{T}_h$ is said to satisfy \emph{an inverse assumption} (cf., e.g., \cite{PGCFEM}) if there exists a constant $\upsilon>0$ such that
$$
\textup{for all }T \in \mathcal{T}_h,\quad \dfrac{h}{h_T} \le \upsilon.
$$

We assume that the finite elements $(K, P_K, \Sigma_K)$, $K \in \bigcup_{h>0}\mathcal{T}_h$, are of class $\mathcal{C}^0$ and are affine (cf. Section~2.3 of~\cite{PGCFEM}), in the sense that they are affine equivalent to a single reference element $(\hat{K}, \hat{P}, \hat{\Sigma})$.


The forthcoming finite element analysis will be carried out using triangles of type $(1)$ (see Figure~2.2.1 of~\cite{PGCFEM}) to approximate the components of the solution of Problem~\ref{problem2-1}. In this case, the set $\mathcal{V}_h$ consists of all the vertices of the triangulation $\mathcal{T}_h$.

Let $\{V_{i,h}\}_{i=1}^3$ be three finite dimensional spaces such that $V_{i,h}\subset H^1_0(\omega)$.
Define
$$
\bm{V}_h:=V_{1,h} \times V_{2,h}\times V_{3,h},
$$
and observe that $\bm{V}_h \subset \bm{H}^1_0(\omega)$.

Let us now define the $\bm{V}_h$ interpolation operator $\bm{\Pi}_h:\bm{\mathcal{C}}^0(\overline{\omega})\to\bm{V}_h$ as follows
$$
\bm{\Pi}_h \bm{\xi}:=\left(\Pi_{1,h} \xi_1, \Pi_{2,h} \xi_2, \Pi_{3,h} \xi_3\right)\quad\textup{ for all }\bm{\xi}=(\xi_i)\in \bm{\mathcal{C}}^0(\overline{\omega}),
$$
where $\Pi_{i,h}$ is the standard $V_{i,h}$ interpolation operator (cf., e.g., \cite{PGCFEM}). 
It thus results that the interpolation operator $\bm{\Pi}_h$ satisfies the following properties
\begin{equation*}
(\Pi_{j,h} \xi_j)(p)=\xi_j(p)\quad\textup{ for all integers }1 \le j \le 3 \textup{ and all vertices }p \in \mathcal{V}_h.
\end{equation*}

An application of Theorem~3.2.1 of~\cite{PGCFEM} gives that there exists a constant $\hat{C}>0$ independent of $h$ such that
\begin{equation}
\label{Pih}
\|\bm{\xi}-\bm{\Pi}_h \bm{\xi}\|_{\bm{H}^1_0(\omega)} \le \hat{C} h |\bm{\xi}|_{\bm{H}^2(\omega)},
\end{equation}
for all $\bm{\xi}\in \bm{H}^2(\omega)\cap \bm{H}^1_0(\omega)$, where $|\cdot|_{\bm{H}^2(\omega)}$ denotes the semi-norm associated with the norm $\|\cdot\|_{\bm{H}^2(\omega)}$.

The discretised version of Problem~\ref{problem2} is formulated as follows.

\begin{customprob}{$\tilde{\mathcal{P}}_{K,\kappa}^{\varepsilon,h}(\omega)$}
	\label{problem3}
	Find $(\tilde{\bm{\zeta}}^{\varepsilon,h}_\kappa,\tilde{\bm{\varphi}}^{\varepsilon,h}_\kappa) \in \bm{V}_h \times \bm{V}_h$ satisfying the following variational equations:
	\begin{equation*}
	\begin{aligned}
		&\varepsilon\int_\omega a^{\alpha\beta\sigma\tau} \tilde{\gamma}_{\sigma\tau}(\tilde{\bm{\zeta}}^{\varepsilon,h}_\kappa) \tilde{\gamma}_{\alpha\beta} (\tilde{\bm{\eta}}) \sqrt{a} \dd y
		+\dfrac{\varepsilon^3}{3} \int_\omega a^{\alpha\beta\sigma\tau} \tilde{\rho}_{\sigma\tau}(\tilde{\bm{\zeta}}^{\varepsilon,h}_\kappa,\tilde{\bm{\varphi}}^{\varepsilon,h}_\kappa) \tilde{\rho}_{\alpha\beta} (\tilde{\bm{\eta}},\tilde{\bm{\xi}}) \sqrt{a} \dd y\\
		&\quad+\dfrac{\varepsilon}{\kappa}\int_{\omega} \left[\tilde{\bm{\varphi}}^{\varepsilon,h}_\kappa+(\partial_\alpha\tilde{\bm{\zeta}}^{\varepsilon,h}_\kappa\cdot\bm{a}_3)\bm{a}^\alpha\right] \cdot \left[\tilde{\bm{\xi}}+(\partial_\alpha \tilde{\bm{\eta}}\cdot\bm{a}_3)\bm{a}^\alpha\right] \dd y\\	&\quad+\dfrac{\varepsilon}{\kappa}\int_{\omega}\left(-\{(\bm{\theta}+\tilde{\bm{\zeta}}^{\varepsilon,h}_\kappa)\cdot\bm{q}\}^{-} \bm{q}\right) \cdot \tilde{\bm{\eta}} \dd y= \int_\omega \tilde{\bm{p}}^{\varepsilon} \cdot \tilde{\bm{\eta}} \sqrt{a} \dd y,
	\end{aligned}
	\end{equation*}
	for all $(\tilde{\bm{\eta}},\tilde{\bm{\xi}}) \in \bm{V}_h \times \bm{V}_h$.
	\bqed
\end{customprob}

It can be easily shown, thanks to the Minty-Browder theorem (cf.,e.g., Theorem~9.14-1 of~\cite{PGCLNFAA}), that Problem~\ref{problem3} admits a unique solution $(\tilde{\bm{\zeta}}^{\varepsilon,h}_\kappa,\tilde{\bm{\varphi}}^{\varepsilon,h}_\kappa) \in \bm{V}_h \times \bm{V}_h$. We also refer the reader to~\cite{BK23} for a discussion on convex minimisation problems.

We are thus able to derive the main result of this paper, which is the convergence of the solution $(\tilde{\bm{\zeta}}^{\varepsilon,h}_\kappa,\tilde{\bm{\varphi}}^{\varepsilon,h}_\kappa)$ of the discrete variational problem~\ref{problem3} to the solution $(\tilde{\bm{\zeta}}^\varepsilon_\kappa,\tilde{\bm{\varphi}}^\varepsilon_\kappa)$ of Problem~\ref{problem2-1}.

\begin{theorem}
\label{th:conv}
Let $(\tilde{\bm{\zeta}}^\varepsilon_\kappa,\tilde{\bm{\varphi}}^\varepsilon_\kappa) \in \mathbb{X}(\omega)$ be the solution of Problem~\ref{problem2-1}, and let $(\tilde{\bm{\zeta}}^{\varepsilon,h}_\kappa,\tilde{\bm{\varphi}}^{\varepsilon,h}_\kappa)$ be the solution of Problem~\ref{problem3}. Then, we have that
\begin{equation*}
\|(\tilde{\bm{\zeta}}^\varepsilon_\kappa,\tilde{\bm{\varphi}}^\varepsilon_\kappa)-(\tilde{\bm{\zeta}}^{\varepsilon,h}_\kappa,\tilde{\bm{\varphi}}^{\varepsilon,h}_\kappa)\|_{\mathbb{X}(\omega)} \to 0,\quad\textup{ as }h \to 0^+.
\end{equation*}
\end{theorem}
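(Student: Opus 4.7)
I plan to establish a Céa-type error estimate, relying on the $\mathbb{X}(\omega)$-ellipticity of the linear part of the operator and on the monotonicity and Lipschitz continuity of the obstacle-penalty operator from Lemma~\ref{lem:beta}, and then on Theorem~\ref{regularity} to convert the standard interpolation estimate~\eqref{Pih} into the sought convergence.

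First I would subtract the variational equations of Problem~\ref{problem2-1} and Problem~\ref{problem3}, tested against an arbitrary $(\tilde{\bm{\eta}},\tilde{\bm{\xi}})\in\bm{V}_h\times\bm{V}_h$. Denoting by $a(\cdot,\cdot)$ the symmetric, continuous, $\mathbb{X}(\omega)$-elliptic bilinear form that gathers the elasticity contribution and the Blouza--LeDret compatibility penalty --- whose ellipticity is precisely the statement already exploited in the proof of Theorem~\ref{t:convergence} when deriving the uniform bound~\eqref{bound-0} --- this produces the Galerkin-type identity asserting that $a(\bm{e},(\tilde{\bm{\eta}},\tilde{\bm{\xi}}))$ plus the difference of the obstacle-penalty nonlinearity evaluated at $\tilde{\bm{\zeta}}^\varepsilon_\kappa$ and at $\tilde{\bm{\zeta}}^{\varepsilon,h}_\kappa$ and tested against $\tilde{\bm{\eta}}$ vanishes, where $\bm{e}:=(\tilde{\bm{\zeta}}^\varepsilon_\kappa-\tilde{\bm{\zeta}}^{\varepsilon,h}_\kappa,\tilde{\bm{\varphi}}^\varepsilon_\kappa-\tilde{\bm{\varphi}}^{\varepsilon,h}_\kappa)$.

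Next I would specialise $(\tilde{\bm{\eta}},\tilde{\bm{\xi}}):=\bm{\Pi}_h(\tilde{\bm{\zeta}}^\varepsilon_\kappa,\tilde{\bm{\varphi}}^\varepsilon_\kappa)-(\tilde{\bm{\zeta}}^{\varepsilon,h}_\kappa,\tilde{\bm{\varphi}}^{\varepsilon,h}_\kappa)\in\bm{V}_h\times\bm{V}_h$ and split this test function as $\bm{e}-\bm{\pi}$, where $\bm{\pi}:=(I-\bm{\Pi}_h)(\tilde{\bm{\zeta}}^\varepsilon_\kappa,\tilde{\bm{\varphi}}^\varepsilon_\kappa)$. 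Expanding the identity along $\bm{e}$ and $-\bm{\pi}$ puts on the left-hand side the diagonal quantity $a(\bm{e},\bm{e})$, bounded from below by $C_1(\varepsilon,\kappa)\,\|\bm{e}\|_{\mathbb{X}(\omega)}^2$ via Lemma~3.3 of~\cite{Blouza2016}, together with the diagonal difference of the obstacle-penalty operator applied to $\tilde{\bm{\zeta}}^\varepsilon_\kappa$ and $\tilde{\bm{\zeta}}^{\varepsilon,h}_\kappa$, which is non-negative thanks to the monotonicity established in Lemma~\ref{lem:beta}. The two cross contributions involving $\bm{\pi}$ are then estimated by continuity of $a$ and by the Lipschitz continuity (with constant $L=1$) of $\bm{\beta}$ asserted in Lemma~\ref{lem:beta}, yielding, after absorbing a factor of $\|\bm{e}\|_{\mathbb{X}(\omega)}$ on the left, the Céa-type estimate
\[
\|\bm{e}\|_{\mathbb{X}(\omega)} \;\le\; C(\varepsilon,\kappa)\,\|\bm{\pi}\|_{\mathbb{X}(\omega)},
\]
with $C(\varepsilon,\kappa)$ independent of $h$.

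Finally I would invoke Theorem~\ref{regularity}, according to which $(\tilde{\bm{\zeta}}^\varepsilon_\kappa,\tilde{\bm{\varphi}}^\varepsilon_\kappa)\in(\bm{H}^2(\omega)\cap\bm{H}^1_0(\omega))^{2}$, and combine it with the interpolation estimate~\eqref{Pih} applied componentwise to both arguments of $\bm{\pi}$ to conclude that $\|\bm{\pi}\|_{\mathbb{X}(\omega)}\le\hat{C}\,h\,\bigl(|\tilde{\bm{\zeta}}^\varepsilon_\kappa|_{\bm{H}^2(\omega)}+|\tilde{\bm{\varphi}}^\varepsilon_\kappa|_{\bm{H}^2(\omega)}\bigr)\to 0$ as $h\to 0^{+}$, which delivers the sought convergence (in fact with an implicit $O(h)$ rate). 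The main obstacle I anticipate is the management of the nonlinear obstacle-penalty term within the Céa argument: on the diagonal one needs a non-negative contribution --- for which monotonicity of $\bm{\beta}$ is indispensable --- while the cross evaluation against $\bm{\pi}$ must be controlled in $\bm{L}^2$ by $\|\bm{e}\|_{\mathbb{X}(\omega)}\|\bm{\pi}\|_{\bm{L}^2(\omega)}$, which demands the Lipschitz continuity. The fact that Lemma~\ref{lem:beta} simultaneously supplies both properties for the \emph{same} operator is precisely what makes the standard linear Galerkin machinery carry over to the nonlinear setting of Problem~\ref{problem3}, without requiring \emph{a priori} any information on the sign or higher regularity of the obstacle multiplier.
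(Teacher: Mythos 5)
Your proposal is correct and follows essentially the same route as the paper's proof: Galerkin orthogonality combined with the coercivity of the elasticity-plus-compatibility form (Lemma~3.3 of~\cite{Blouza2016}, as in~\eqref{bound-0}), monotonicity of the obstacle-penalty term on the diagonal and its Lipschitz/$L^2$ control on the cross term, Young's inequality to absorb, and finally the interpolation estimate~\eqref{Pih} together with the $\bm{H}^2$ regularity of Theorem~\ref{regularity}. The only cosmetic difference is that the paper retains the strong-monotonicity lower bound $\frac{\varepsilon}{\kappa}\|\bm{\beta}(\tilde{\bm{\zeta}}^\varepsilon_\kappa)-\bm{\beta}(\tilde{\bm{\zeta}}^{\varepsilon,h}_\kappa)\|_{\bm{L}^2(\omega)}^2$ on the left and absorbs the penalty cross term into it, whereas you bound that cross term directly by $\|\bm{e}\|\,\|\bm{\pi}\|$ via Lipschitz continuity; both absorptions work and yield the same $h$-independent Céa constant (depending on $\varepsilon$ and $\kappa$).
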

\begin{proof}
Define $m:=\min\{1,\frac{\sqrt{a_0}}{c_e}\}$, define $M:=\max_{\alpha,\beta,\sigma,\tau \in \{1,2\}}\|a^{\alpha\beta\sigma\tau}\|_{\mathcal{C}^0(\overline{\omega})}$, and let $\tilde{C}_0$ be the positive constant independent of $\varepsilon$, $\kappa$ and $h$ introduced in~\eqref{bound-0}. Observe that the following chain of estimates holds
\begin{equation*}
\begin{aligned}
&\dfrac{m \tilde{C}_0^2 \varepsilon^3}{3} \left\{\|\tilde{\bm{\zeta}}^\varepsilon_\kappa-\tilde{\bm{\zeta}}^{\varepsilon,h}_\kappa\|_{\bm{H}^1_0(\omega)}^2
+\|\tilde{\bm{\varphi}}^\varepsilon_\kappa-\tilde{\bm{\varphi}}^{\varepsilon,h}_\kappa\|_{\bm{H}^1_0(\omega)}^2\right\}
+\dfrac{\varepsilon}{\kappa}\left\|\left(-\{(\bm{\theta}+\tilde{\bm{\zeta}}^\varepsilon_\kappa)\cdot\bm{q}\}^{-} \bm{q}\right)-\left(-\{(\bm{\theta}+\tilde{\bm{\zeta}}^{\varepsilon,h}_\kappa)\cdot\bm{q}\}^{-} \bm{q}\right)\right\|_{\bm{L}^2(\omega)}^2\\
&\le\varepsilon\int_\omega a^{\alpha\beta\sigma\tau} \tilde{\gamma}_{\sigma\tau}(\tilde{\bm{\zeta}}^\varepsilon_\kappa-\tilde{\bm{\zeta}}^{\varepsilon,h}_\kappa) \tilde{\gamma}_{\alpha\beta}(\tilde{\bm{\zeta}}^\varepsilon_\kappa-\tilde{\bm{\zeta}}^{\varepsilon,h}_\kappa)\sqrt{a} \dd y\\
&\quad+\dfrac{\varepsilon^3}{3}\int_\omega a^{\alpha\beta\sigma\tau} \tilde{\rho}_{\sigma\tau}(\tilde{\bm{\zeta}}^\varepsilon_\kappa-\tilde{\bm{\zeta}}^{\varepsilon,h}_\kappa,\tilde{\bm{\varphi}}^\varepsilon_\kappa-\tilde{\bm{\varphi}}^{\varepsilon,h}_\kappa) \tilde{\rho}_{\alpha\beta}(\tilde{\bm{\zeta}}^\varepsilon_\kappa-\tilde{\bm{\zeta}}^{\varepsilon,h}_\kappa,\tilde{\bm{\varphi}}^\varepsilon_\kappa-\tilde{\bm{\varphi}}^{\varepsilon,h}_\kappa)\sqrt{a} \dd y\\
&\quad+\dfrac{\varepsilon}{\kappa}\int_{\omega} \left|(\tilde{\bm{\varphi}}^\varepsilon_\kappa-\tilde{\bm{\varphi}}^{\varepsilon,h}_\kappa)+(\partial_\beta(\tilde{\bm{\zeta}}^\varepsilon_\kappa-\tilde{\bm{\zeta}}^{\varepsilon,h}_\kappa)\cdot\bm{a}^3)\bm{a}_\beta\right|^2 \dd y\\
&\quad+\dfrac{\varepsilon}{\kappa}\int_{\omega}\left[\left(-\{(\bm{\theta}+\tilde{\bm{\zeta}}^\varepsilon_\kappa)\cdot\bm{q}\}^{-} \bm{q}\right)-\left(-\{(\bm{\theta}+\tilde{\bm{\zeta}}^{\varepsilon,h}_\kappa)\cdot\bm{q}\}^{-} \bm{q}\right)\right] \cdot(\tilde{\bm{\zeta}}^\varepsilon_\kappa-\tilde{\bm{\zeta}}^{\varepsilon,h}_\kappa) \dd y\\
&=\varepsilon\int_\omega a^{\alpha\beta\sigma\tau} \tilde{\gamma}_{\sigma\tau}(\tilde{\bm{\zeta}}^\varepsilon_\kappa-\tilde{\bm{\zeta}}^{\varepsilon,h}_\kappa) \tilde{\gamma}_{\alpha\beta}(\tilde{\bm{\zeta}}^\varepsilon_\kappa-\Pi_h\tilde{\bm{\zeta}}^\varepsilon_\kappa)\sqrt{a} \dd y\\
&\quad+\dfrac{\varepsilon^3}{3}\int_\omega a^{\alpha\beta\sigma\tau} \tilde{\rho}_{\sigma\tau}(\tilde{\bm{\zeta}}^\varepsilon_\kappa-\tilde{\bm{\zeta}}^{\varepsilon,h}_\kappa,\tilde{\bm{\varphi}}^\varepsilon_\kappa-\tilde{\bm{\varphi}}^{\varepsilon,h}_\kappa) \tilde{\rho}_{\alpha\beta}(\tilde{\bm{\zeta}}^\varepsilon_\kappa-\Pi_h \tilde{\bm{\zeta}}^\varepsilon_\kappa,\tilde{\bm{\varphi}}^\varepsilon_\kappa-\Pi_h \tilde{\bm{\varphi}}^\varepsilon_\kappa)\sqrt{a} \dd y\\
&\quad+\dfrac{\varepsilon}{\kappa}\int_{\omega} \left[(\tilde{\bm{\varphi}}^\varepsilon_\kappa-\tilde{\bm{\varphi}}^{\varepsilon,h}_\kappa)+(\partial_\beta(\tilde{\bm{\zeta}}^\varepsilon_\kappa-\tilde{\bm{\zeta}}^{\varepsilon,h}_\kappa)\cdot\bm{a}^3)\bm{a}_\beta\right]
\left[(\tilde{\bm{\varphi}}^\varepsilon_\kappa-\Pi_h \tilde{\bm{\varphi}}^\varepsilon_\kappa)+(\partial_\beta(\tilde{\bm{\zeta}}^\varepsilon_\kappa-\Pi_h \tilde{\bm{\zeta}}^\varepsilon_\kappa)\cdot\bm{a}^3)\bm{a}_\beta\right] \dd y\\
&\quad+\dfrac{\varepsilon}{\kappa}\int_{\omega}\left[\left(-\{(\bm{\theta}+\tilde{\bm{\zeta}}^\varepsilon_\kappa)\cdot\bm{q}\}^{-} \bm{q}\right)-\left(-\{(\bm{\theta}+\tilde{\bm{\zeta}}^{\varepsilon,h}_\kappa)\cdot\bm{q}\}^{-} \bm{q}\right)\right] \cdot(\tilde{\bm{\zeta}}^\varepsilon_\kappa-\Pi_h\tilde{\bm{\zeta}}^\varepsilon_\kappa) \dd y\\
&\le \varepsilon M \sqrt{a_1} \|\tilde{\bm{\zeta}}^\varepsilon_\kappa-\tilde{\bm{\zeta}}^{\varepsilon,h}_\kappa\|_{\bm{H}^1_0(\omega)} \|\tilde{\bm{\zeta}}^\varepsilon_\kappa-\Pi_h \tilde{\bm{\zeta}}^\varepsilon_\kappa\|_{\bm{H}^1_0(\omega)}\\
&\quad+\dfrac{\varepsilon^3}{3} M \sqrt{a_1} \left(\max_{1\le i \le 3}\|\bm{a}_i\|_{\bm{\mathcal{C}}^1(\overline{\omega})}^2\right) \left(\|\tilde{\bm{\zeta}}^\varepsilon_\kappa-\tilde{\bm{\zeta}}^{\varepsilon,h}_\kappa\|_{\bm{H}^1_0(\omega)}+\|\tilde{\bm{\varphi}}^\varepsilon_\kappa-\tilde{\bm{\varphi}}^{\varepsilon,h}_\kappa\|_{\bm{H}^1_0(\omega)}\right)\\
&\quad \times\left(\|\tilde{\bm{\zeta}}^\varepsilon_\kappa-\Pi_h\tilde{\bm{\zeta}}^\varepsilon_\kappa\|_{\bm{H}^1_0(\omega)}+\|\tilde{\bm{\varphi}}^\varepsilon_\kappa-\Pi_h \tilde{\bm{\varphi}}^\varepsilon_\kappa\|_{\bm{H}^1_0(\omega)}\right)\\
&\quad+\dfrac{\varepsilon}{\kappa} \left\|\left(-\{(\bm{\theta}+\tilde{\bm{\zeta}}^\varepsilon_\kappa)\cdot\bm{q}\}^{-} \bm{q}\right)-\left(-\{(\bm{\theta}+\tilde{\bm{\zeta}}^{\varepsilon,h}_\kappa)\cdot\bm{q}\}^{-} \bm{q}\right)\right\|_{\bm{L}^2(\omega)} \|\tilde{\bm{\zeta}}^\varepsilon_\kappa-\Pi_h \tilde{\bm{\zeta}}^\varepsilon_\kappa\|_{\bm{H}^1_0(\omega)}\\
&\quad+\frac{\varepsilon}{\kappa}\max_{1\le i \le 3}\|\bm{a}_i\|_{\bm{\mathcal{C}}^1(\overline{\omega})}\left(\|\tilde{\bm{\varphi}}^\varepsilon_\kappa-\tilde{\bm{\varphi}}^{\varepsilon,h}_\kappa\|_{\bm{H}^1_0(\omega)}+\|\tilde{\bm{\zeta}}^\varepsilon_\kappa-\tilde{\bm{\zeta}}^{\varepsilon,h}_\kappa\|_{\bm{H}^1_0(\omega)}\right)
\left(\|\tilde{\bm{\varphi}}^\varepsilon_\kappa-\Pi_h\tilde{\bm{\varphi}}^\varepsilon_\kappa\|_{\bm{H}^1_0(\omega)}+\|\tilde{\bm{\zeta}}^\varepsilon_\kappa-\Pi_h\tilde{\bm{\zeta}}^\varepsilon_\kappa\|_{\bm{H}^1_0(\omega)}\right)\\
&\le m \tilde{C}_0^2\dfrac{\varepsilon^3}{6} \|\tilde{\bm{\zeta}}^\varepsilon_\kappa-\tilde{\bm{\zeta}}^{\varepsilon,h}_\kappa\|_{\bm{H}^1_0(\omega)}^2
+m \tilde{C}_0^2\dfrac{\varepsilon^3}{8} \|\tilde{\bm{\varphi}}^\varepsilon_\kappa-\tilde{\bm{\varphi}}^{\varepsilon,h}_\kappa\|_{\bm{H}^1_0(\omega)}^2\\
&\quad+\dfrac{\varepsilon}{2\kappa} \left\|\left(-\{(\bm{\theta}+\tilde{\bm{\zeta}}^\varepsilon_\kappa)\cdot\bm{q}\}^{-} \bm{q}\right)-\left(-\{(\bm{\theta}+\tilde{\bm{\zeta}}^{\varepsilon,h}_\kappa)\cdot\bm{q}\}^{-} \bm{q}\right)\right\|_{\bm{L}^2(\omega)}^2\\
&\quad+\left(\dfrac{6 M^2 a_1}{m \varepsilon \tilde{C}_0^2}+\dfrac{8M^2 a_1 \varepsilon^3 \max_{1\le i \le 3}\|\bm{a}_i\|_{\bm{\mathcal{C}}^1(\overline{\omega})}^4}{3m \tilde{C}_0^2}+\dfrac{12 \max_{1\le i \le 3}\|\bm{a}_i\|_{\bm{\mathcal{C}}^1(\overline{\omega})}^4}{\varepsilon m \tilde{C}_0^2 \kappa^2}+\dfrac{\varepsilon}{2\kappa}\right) \|\tilde{\bm{\zeta}}^\varepsilon_\kappa-\Pi_h \tilde{\bm{\zeta}}^\varepsilon_\kappa\|_{\bm{H}^1_0(\omega)}^2\\
&\quad+\left(\dfrac{8M^2 a_1 \varepsilon^3 \max_{1\le i \le 3}\|\bm{a}_i\|_{\bm{\mathcal{C}}^1(\overline{\omega})}^4}{3m \tilde{C}_0^2}+\dfrac{12 \max_{1\le i \le 3}\|\bm{a}_i\|_{\bm{\mathcal{C}}^1(\overline{\omega})}^4}{\varepsilon m \tilde{C}_0^2 \kappa^2}\right) \|\tilde{\bm{\varphi}}^\varepsilon_\kappa-\Pi_h \tilde{\bm{\varphi}}^\varepsilon_\kappa\|_{\bm{H}^1_0(\omega)}^2,
\end{aligned}
\end{equation*}
where the first estimate is due to Lemma~3.3 of~\cite{Blouza2016}, the second estimate holds thanks to H\"older's inequality, the third estimate holds thanks to Young's inequality~\cite{Young1912} as well as the inequality~\eqref{Pih}.

Thanks to Theorem~\ref{aug:int} (cf.~\eqref{conclusion-2}) and Theorem~\ref{aug:bdry}, according to which the penalty vanishes near the boundary $\gamma$, we have that the following estimate holds
\begin{equation}
\label{final-estimate}
\begin{aligned}
&\dfrac{m \tilde{C}_0^2 \varepsilon^3}{6} \|\tilde{\bm{\zeta}}^\varepsilon_\kappa-\tilde{\bm{\zeta}}^{\varepsilon,h}_\kappa\|_{\bm{H}^1_0(\omega)}^2
+\dfrac{5 m \tilde{C}_0^2 \varepsilon^3}{24}\|\tilde{\bm{\varphi}}^\varepsilon_\kappa-\tilde{\bm{\varphi}}^{\varepsilon,h}_\kappa\|_{\bm{H}^1_0(\omega)}^2\\
&\quad+\dfrac{\varepsilon}{2\kappa}\left\|\left(-\{(\bm{\theta}+\tilde{\bm{\zeta}}^\varepsilon_\kappa)\cdot\bm{q}\}^{-} \bm{q}\right)-\left(-\{(\bm{\theta}+\tilde{\bm{\zeta}}^{\varepsilon,h}_\kappa)\cdot\bm{q}\}^{-} \bm{q}\right)\right\|_{\bm{L}^2(\omega)}^2\\
&\le\left(\dfrac{6 M^2 a_1}{m \varepsilon \tilde{C}_0^2}+\dfrac{8M^2 a_1 \varepsilon^3 \max_{1\le i \le 3}\|\bm{a}_i\|_{\bm{\mathcal{C}}^1(\overline{\omega})}^4}{3m \tilde{C}_0^2}+\dfrac{12 \max_{1\le i \le 3}\|\bm{a}_i\|_{\bm{\mathcal{C}}^1(\overline{\omega})}^4}{\varepsilon m \tilde{C}_0^2 \kappa^2}+\dfrac{\varepsilon}{2\kappa}\right) \dfrac{\hat{C}_0^2 h^2}{\kappa\varepsilon^{10}}\\
&\quad+\left(\dfrac{8M^2 a_1 \varepsilon^3 \max_{1\le i \le 3}\|\bm{a}_i\|_{\bm{\mathcal{C}}^1(\overline{\omega})}^4}{3m \tilde{C}_0^2}+\dfrac{12 \max_{1\le i \le 3}\|\bm{a}_i\|_{\bm{\mathcal{C}}^1(\overline{\omega})}^4}{\varepsilon m \tilde{C}_0^2 \kappa^2}\right) \dfrac{\hat{C}_0^2 h^2}{\kappa\varepsilon^{10}},
\end{aligned}
\end{equation}
where $\hat{C}_0>0$ is independent of $\varepsilon$, $\kappa$ and $h$. The denominator in terms of powers of $\varepsilon$ and $\kappa$ is due to~\eqref{Pih}, \eqref{conc-2} and formula~(3.7) in Lemma~3.2 on page~263 of~\cite{Nec67}.

Hence, if we keep $\varepsilon$ and $\kappa$ fixed, and we let $h \to 0^+$, then the desired conclusion follows and the proof is complete.
\end{proof}

As a remark, we observe that the convergence in Theorem~\ref{th:conv} is not, in general, independent of $\varepsilon$ and $\kappa$.

If we specialise $\kappa$ as follows,
\begin{equation*}
\kappa=\dfrac{h^{q/3}}{\varepsilon^{11}}, \quad\textup{ with } 0\le q <2,
\end{equation*}
we have that $\kappa=\kappa(h) \to 0^+$ as $h\to 0^+$, and that the right-hand side of~\eqref{final-estimate} becomes independent of $\varepsilon$ and $\kappa$.

We note in passing that, differently from~\cite{Scholz1987}, in the proof of Theorem~\ref{th:conv} we exploit the augmented regularity in a different way as the vectorial nature of the problem prevents form straightforwardly applying the estimates that led to the conclusion in~\cite{Scholz1987}.

\section{Numerical Simulations}
\label{numerics}

In this last section of the paper, we implement numerical simulations aiming to test the convergence of the algorithms presented in section~\ref{mixed:penalty} and in section~\ref{approx:penalty}.

Let $R>0$ be given. We consider as a domain a circle of radius $r_A:=\frac{R}{2}$, and we denote one such domain by $\omega$:
\begin{equation*}
\omega:=\left\{y=(y_\alpha)\in \mathbb{R}^2;\sqrt{y_1^2+y_2^2}<r_A\right\}.
\end{equation*}

The middle surface of the membrane shell under consideration is a non-hemispherical spherical cap which is not in contact with the plane $\{x_3=0\}$. The parametrization we choose is $\bm{\theta} \in \mathcal{C}^4(\overline{\omega};\mathbb{E}^3)$ defined by:
\begin{equation}
\label{middlesurf}
\bm{\theta}(y):=\left(y_1, y_2, \sqrt{R^2-y_1^2-y_2^2}-0.85\right),\quad\textup{ for all } y=(y_\alpha) \in \overline{\omega}.
\end{equation}

Throughout this section, the values of $\varepsilon$, $\lambda$, $\mu$ and $R$ are fixed once and for all as follows:
\begin{equation*}
	\begin{aligned}
		\varepsilon&=0.001,\\
		\lambda&=0.4,\\
		\mu&=0.012,\\
		R&=1.0.
	\end{aligned}
\end{equation*}

We define, once and for all, the unit-norm vector $\bm{q}$ orthogonal to the plane $\{x_3=0\}$ constituting the obstacle by $\bm{q}=(0,0,1)$.
The applied body force density $\bm{p}^\varepsilon=(p^{i,\varepsilon})$ entering the first two batches of experiments is given by $\bm{p}^\varepsilon=(0,0,g(y))$, where
$$
g(y):=
\begin{cases}
-\frac{\varepsilon}{10}(-5.0 y_1^2-5.0 y_2^2+0.295), &\textup{ if } |y|^2< 0.060,\\
0, &\textup{otherwise}.
\end{cases}
$$

The expressions of the geometrical parameters (i.e., the covariant and contravariant bases, the first fundamental form in covariant and contravariant components, the second fundamental form in covariant and mixed components, etc.) associated with the middle surface~\eqref{middlesurf} were computed by means of a specific MATLAB library, which was used for generating the numerical experiments presented in the recent paper~\cite{DPSY2023}.
The numerical simulations are performed by means of the software FEniCS~\cite{Fenics2016} and the visualization is performed by means of the software ParaView~\cite{Ahrens2005}.
The plots were created by means of the \verb*|matplotlib| libraries from a Python~3.9.8 installation.

The first batch of numerical experiments is meant to validate the claim of Theorem~\ref{t:convergence}.  After fixing the mesh size $0<h<<1$, we let $\kappa$ tend to zero in Problem~\ref{problem3}. Let $(\tilde{\bm{\zeta}}^{\varepsilon,h}_{\kappa_1},\tilde{\bm{\varphi}}^{\varepsilon,h}_{\kappa_1})$ and $(\tilde{\bm{\zeta}}^{\varepsilon,h}_{\kappa_2},\tilde{\bm{\varphi}}^{\varepsilon,h}_{\kappa_2})$ be two solutions of Problem~\ref{problem3}, where $\kappa_2:=2\kappa_1$ and $\kappa_1>0$.
Each component $(\tilde{\zeta}^{\varepsilon}_{\kappa,i},\tilde{\varphi}^{\varepsilon}_{\kappa,i})$ of the solution of Problem~\ref{problem2-1} is discretised by Lagrange triangles (cf., e.g., \cite{PGCFEM}) and homogeneous Dirichlet boundary conditions are imposed for all the components. 
At each iteration, Problem~\ref{problem3} is solved by Newton's method. The algorithm stops when the error is smaller than $5.0 \times 10^{-8}$.

\begin{figure}[H]
	\centering
	\subfloat[Error convergence.]{
		\begin{tabular}{lcr}
			\toprule
			$\kappa$ && Error  \\
			\midrule
			1.1e-09&&6.0e-04\\
			5.9e-10&&3.1e-04\\
			2.9e-10&&1.6e-04\\
			3.7e-11&&1.0e-05\\
			4.6e-12&&1.2e-06\\
			5.8e-13&&1.5e-07\\
			1.4e-13&& 3.9e-08\\
			\bottomrule
		\end{tabular}
	}
\hspace{0.25cm}
	\begin{subfigure}[t]{0.35\linewidth}
		\includegraphics[width=1.0\linewidth]{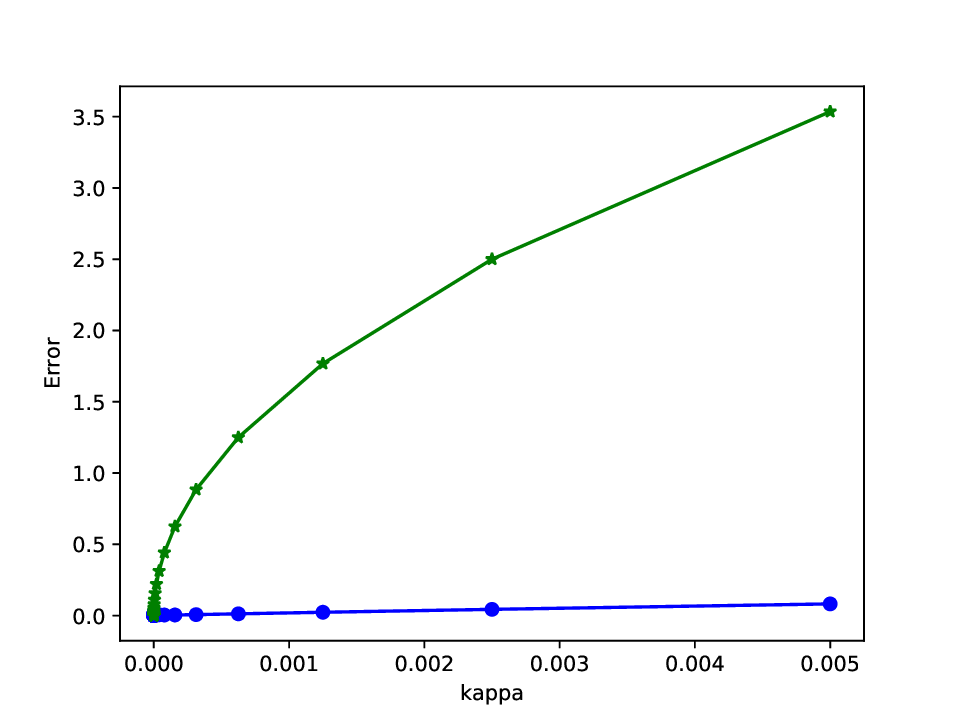}
		\subcaption{Error convergence as $\kappa \to 0^+$ for $h=0.01584901098161906$. Original figure.}
	\end{subfigure}%
	\hspace{0.25cm}
	\begin{subfigure}[t]{0.35\linewidth}
		\includegraphics[width=1.0\linewidth]{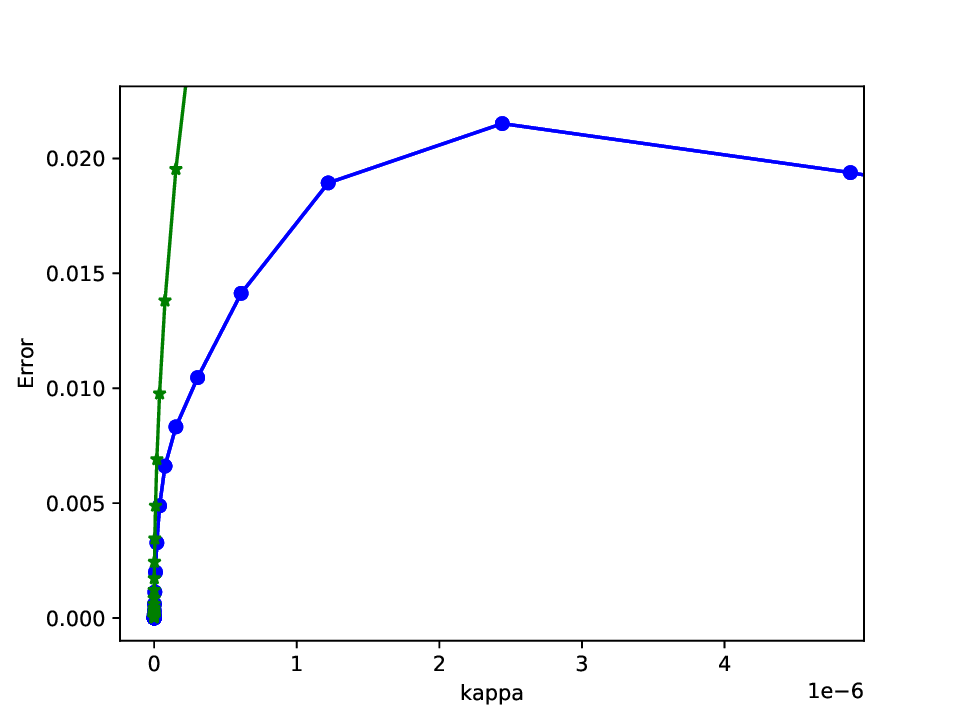}
		\subcaption{Error convergence as $\kappa \to 0^+$ for $h=0.01584901098161906$. The original figure was enlarged near the origin.}
	\end{subfigure}%
\end{figure}

\begin{figure}[H]
	\ContinuedFloat
	\centering
	\subfloat[Error convergence.]{
		\begin{tabular}{lcr}
			\toprule
			$\kappa$ && Error  \\
			\midrule
			1.1e-09&&1.5e-03\\
			1.4e-10&&2.5e-04\\
			7.4e-11&&1.3e-04\\
			9.3e-12&&1.6e-05\\
			5.8e-13&&1.0e-06\\
			7.2e-14&&1.3e-07\\
			1.8e-14&&3.2e-08\\
			\bottomrule
		\end{tabular}
	}
	\begin{subfigure}[t]{0.35\linewidth}
		\includegraphics[width=1.0\linewidth]{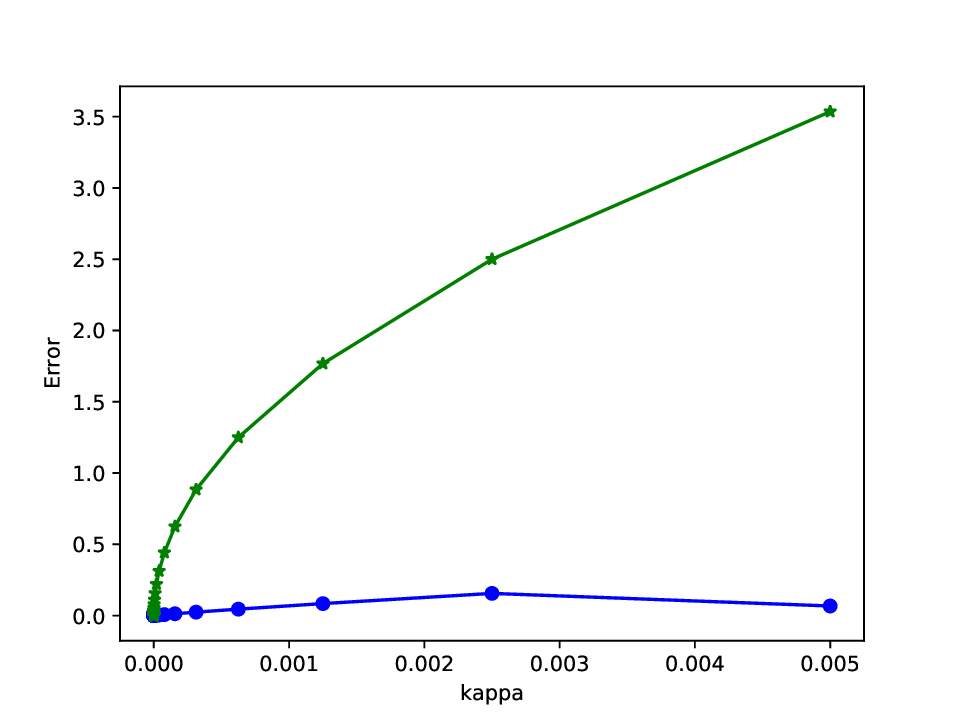}
		\subcaption{Error convergence as $\kappa \to 0^+$ for $h=0.007986374133406946$. Original figure.}
	\end{subfigure}%
	\hspace{0.5cm}
	\begin{subfigure}[t]{0.35\linewidth}
		\includegraphics[width=1.0\linewidth]{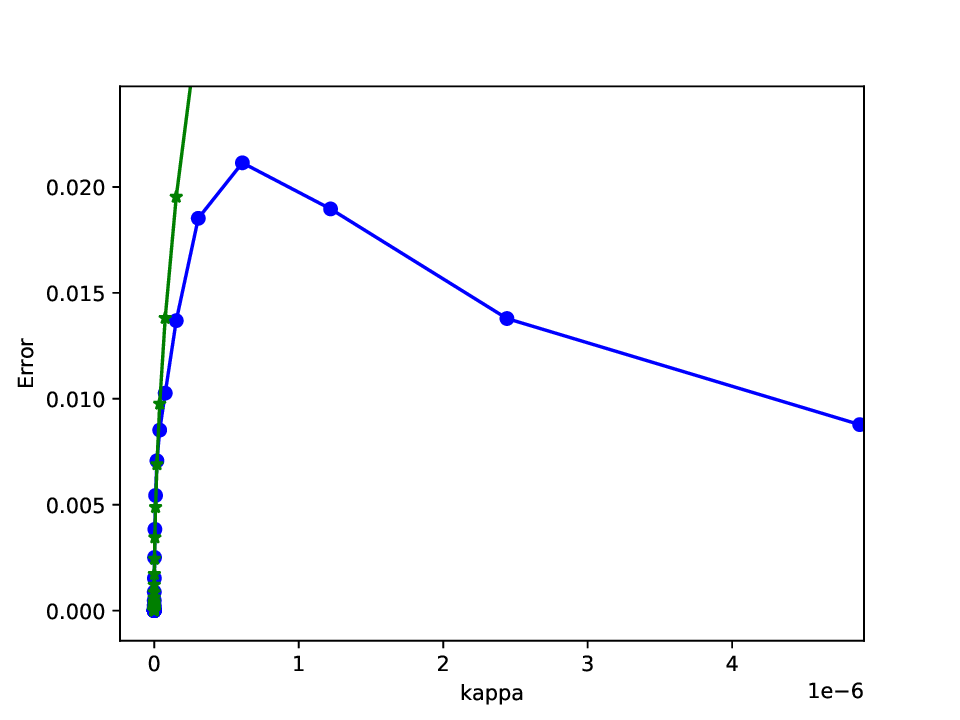}
		\subcaption{Error convergence as $\kappa \to 0^+$ for $h=0.007986374133406946$. This is an enlargement near the origin of the original figure.}
	\end{subfigure}%
\end{figure}

\begin{figure}[H]
	\ContinuedFloat
	\centering
	\subfloat[Error convergence.]{
		\begin{tabular}{lcr}
			\toprule
			$\kappa$ && Error  \\
			\midrule
			1.4e-10&&8.4e-04\\
			1.8e-11&&1.2e-04\\
			2.3e-12&&1.6e-05\\
			1.4e-13&&1.0e-06\\
			1.8e-14&&1.2e-07\\
			9.0e-15&&6.3e-08\\
			4.5e-15&&3.1e-08\\
			\bottomrule
		\end{tabular}
	}
	\begin{subfigure}[t]{0.35\linewidth}
		\includegraphics[width=1.0\linewidth]{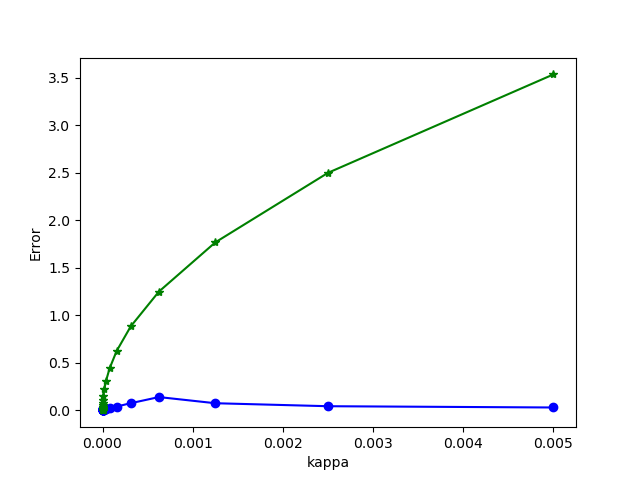}
		\subcaption{Error convergence as $\kappa \to 0^+$ for $h=0.004199300993662772$. Original figure.}
	\end{subfigure}%
	\hspace{0.5cm}
	\begin{subfigure}[t]{0.35\linewidth}
		\includegraphics[width=1.0\linewidth]{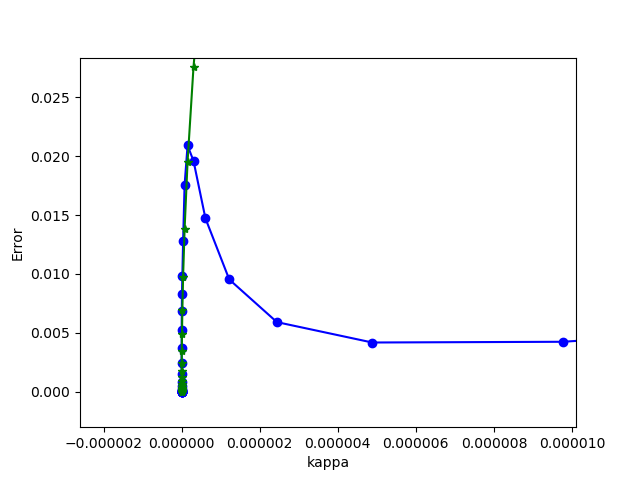}
		\subcaption{Error convergence as $\kappa \to 0^+$ for $h=0.004199300993662772$. This is an enlargement near the origin of the original figure.}
	\end{subfigure}%
\end{figure}

\begin{figure}[H]
	\ContinuedFloat
	\centering
	\subfloat[Error convergence.]{
		\begin{tabular}{lcr}
			\toprule
			$\kappa$ && Error  \\
			\midrule
			1.4e-10&&9.3e-04\\
			1.8e-11&&1.4e-04\\
			2.3e-12&&1.8e-05\\
			1.4e-13&&1.1e-06\\
			1.8e-14&&1.4e-07\\
			9.0e-15&&7.4e-08\\
			4.5e-15&&3.7e-08\\
			\bottomrule
		\end{tabular}
	}
	\begin{subfigure}[t]{0.35\linewidth}
		\includegraphics[width=1.0\linewidth]{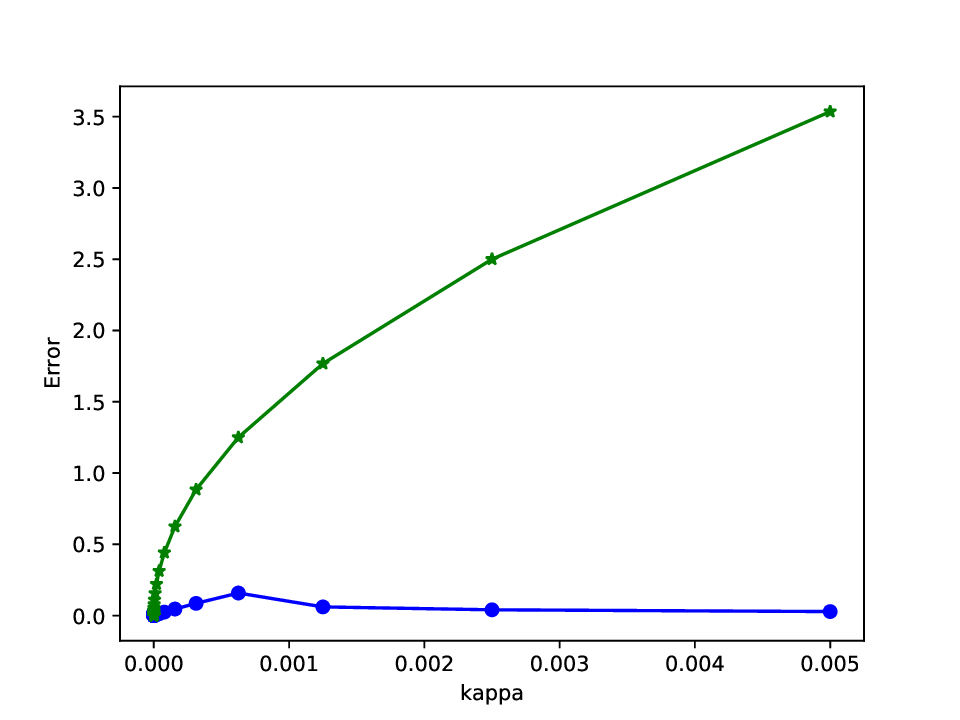}
		\subcaption{Error convergence as $\kappa \to 0^+$ for $h=0.003922444002155661$. Original figure.}
	\end{subfigure}%
	\hspace{0.5cm}
	\begin{subfigure}[t]{0.35\linewidth}
		\includegraphics[width=1.0\linewidth]{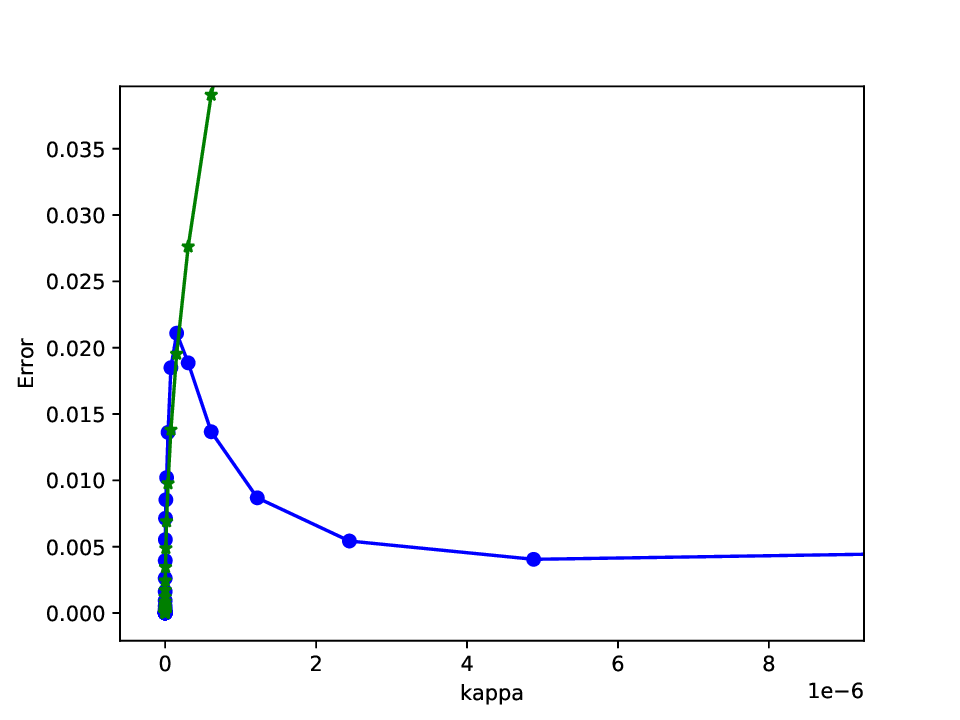}
		\subcaption{Error convergence as $\kappa \to 0^+$ for $h=0.003922444002155661$. This is an enlargement near the origin of the original figure.}
	\end{subfigure}%
\end{figure}

\begin{figure}[H]
	\ContinuedFloat
\centering
\subfloat[Error convergence.]{
	\begin{tabular}{lcr}
		\toprule
		$\kappa$ && Error  \\
		\midrule
		1.4e-10&&1.2e-03\\
		1.8e-11&&1.9e-04\\
		1.1e-12&&1.2e-05\\
		1.4e-13&&1.6e-06\\
		9.0e-15&&1.0e-07\\
		4.5e-15&&5.1e-08\\
		2.2e-15&&2.5e-08\\
		\bottomrule
	\end{tabular}
}
\begin{subfigure}[t]{0.35\linewidth}
	\includegraphics[width=1.0\linewidth]{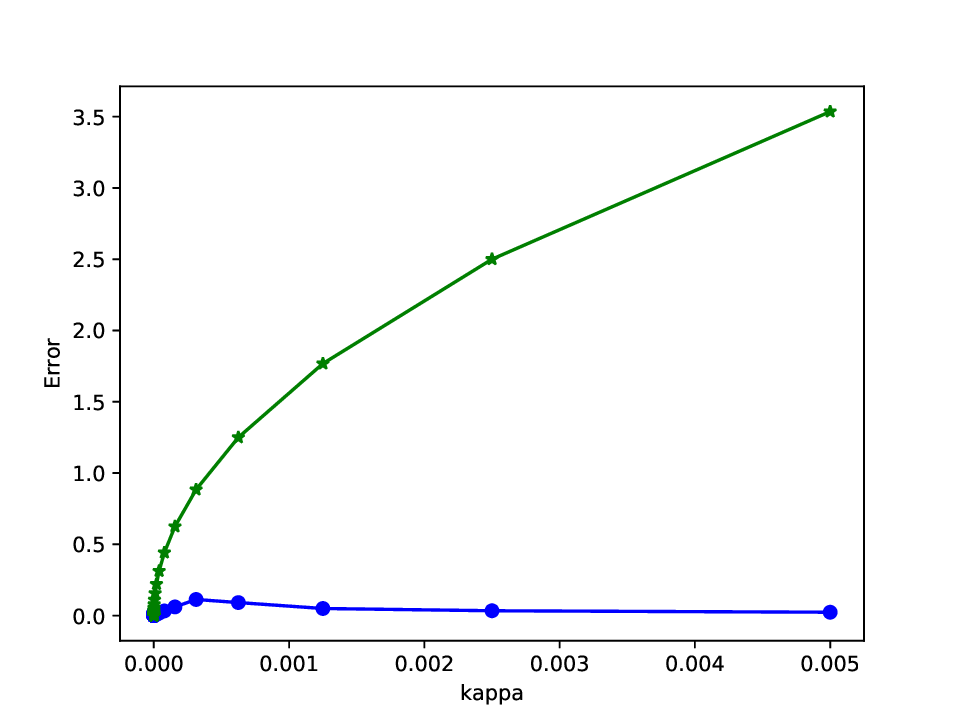}
	\subcaption{Error convergence as $\kappa \to 0^+$ for $h=0.0033478747001890077$. Original figure.}
\end{subfigure}%
\hspace{0.5cm}
\begin{subfigure}[t]{0.35\linewidth}
	\includegraphics[width=1.0\linewidth]{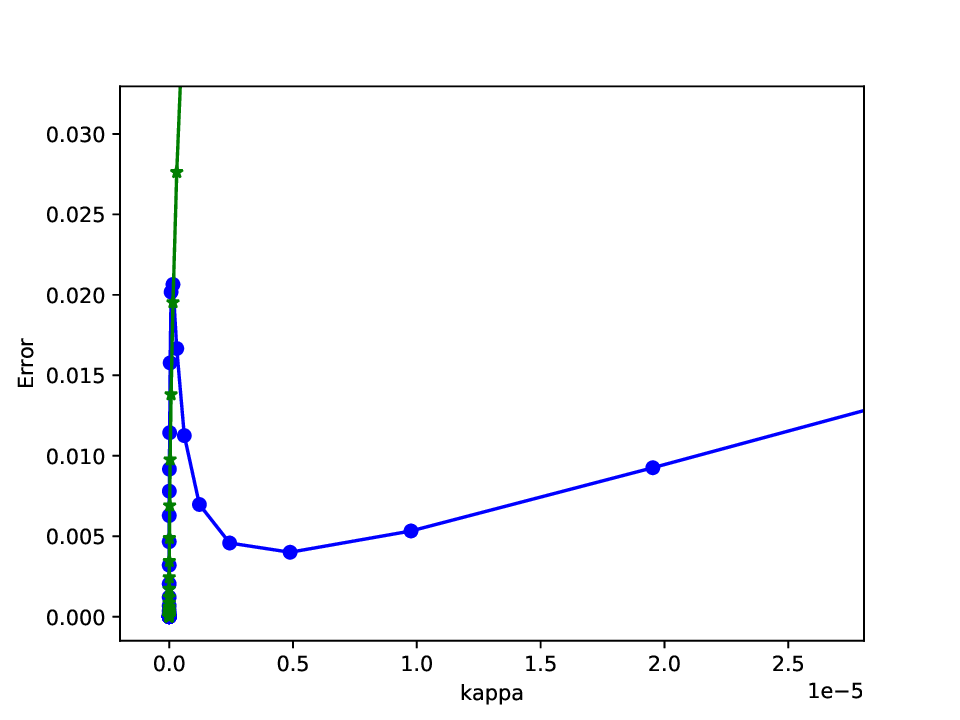}
	\subcaption{Error convergence as $\kappa \to 0^+$ for $h=0.0033478747001890077$. This is an enlargement near the origin of the original figure.}
\end{subfigure}%
\caption{Given $0<h<<1$, the first component of the solution $(\tilde{\bm{\zeta}}^{\varepsilon,h}_\kappa,\tilde{\bm{\varphi}}^{\varepsilon,h}_\kappa)$ of Problem~\ref{problem3} converges with respect to the standard norm of $\bm{H}^1_0(\omega)$ as $\kappa\to0^+$.}
\label{fig:1}
\end{figure}

From the data patterns in Figure~\ref{fig:1} we observe that, for a given mesh size $h$, the solution $(\tilde{\bm{\zeta}}^{\varepsilon,h}_\kappa,\tilde{\bm{\varphi}}^{\varepsilon,h}_\kappa)$ of Problem~\ref{problem3} converges as $\kappa \to 0^+$. This is coherent with the conclusion of Theorem~\ref{t:convergence}. Moreover, we observe that the convergence of the error residual curve depicted in blue follows the pattern of Theorem~\ref{difference-1}, as the curve of error residuals is always below the square root-like graph depicted in green.

The second batch of numerical experiments is meant to validate the claim of Theorem~\ref{th:conv}. 
We show that, for a fixed $0< q <2/3$ as in Theorem~\ref{th:conv}, the error residual $\|\tilde{\bm{\zeta}}^{\varepsilon,h_1}_{h_1^{q}}-\tilde{\bm{\zeta}}^{\varepsilon,h_2}_{h_2^{q}}\|_{\bm{H}^1_0(\omega)}$ tends to zero as $h_1,h_2 \to 0^+$.
The algorithm stops when the error residual of the Cauchy sequence is smaller than $6.0 \times 10^{-5}$.

Once again, each component of the solution of Problem~\ref{problem2-1} is discretised by Lagrange triangles (cf., e.g., \cite{PGCFEM}) and homogeneous Dirichlet boundary conditions are imposed for all the components and, at each iteration, Problem~\ref{problem3} is solved by Newton's method.

The results of these experiments are reported in Figure~\ref{fig:5} below.

\begin{figure}[H]
	\centering
	\begin{subfigure}[t]{0.35\linewidth}
		\includegraphics[width=1.0\linewidth]{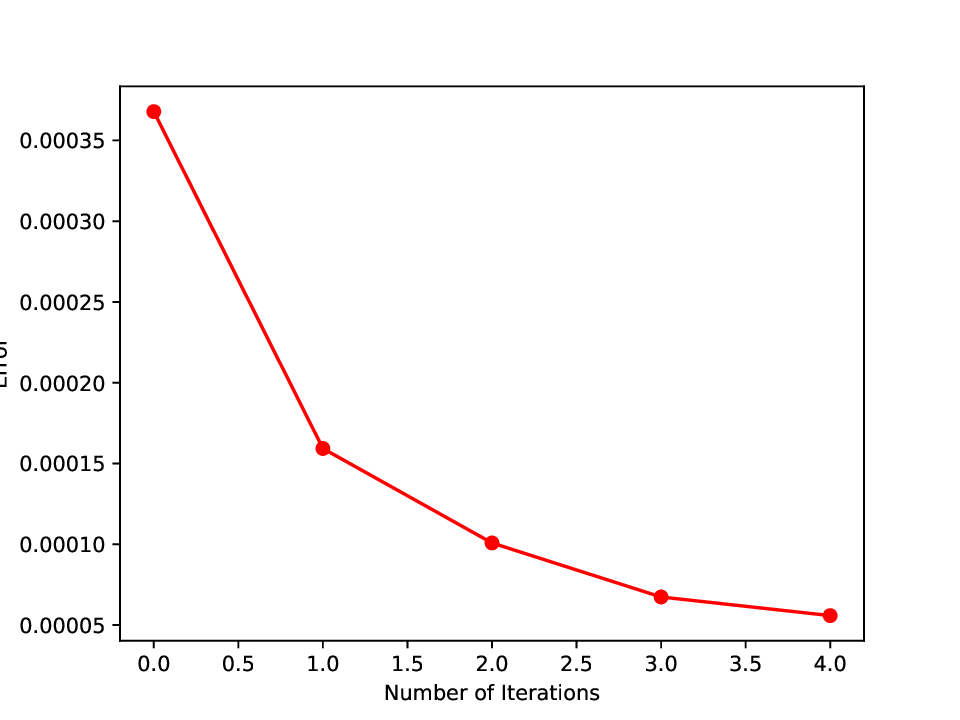}
		\subcaption{For $q=0.4$ the algorithm stops after four iterations}
	\end{subfigure}%
	\hspace{0.5cm}
	\begin{subfigure}[t]{0.35\linewidth}
		\includegraphics[width=1.0\linewidth]{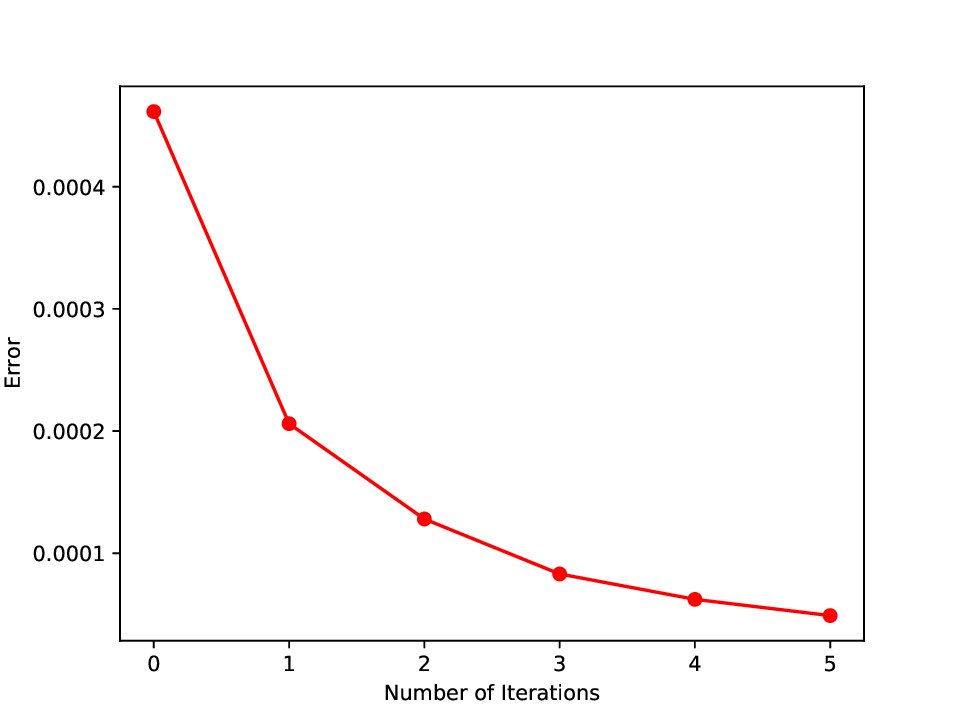}
		\subcaption{For $q=0.5$ the algorithm stops after five iterations}
	\end{subfigure}%
\end{figure}

\begin{figure}[H]
	\ContinuedFloat
	\centering
	\begin{subfigure}[t]{0.35\linewidth}
		\includegraphics[width=1.0\linewidth]{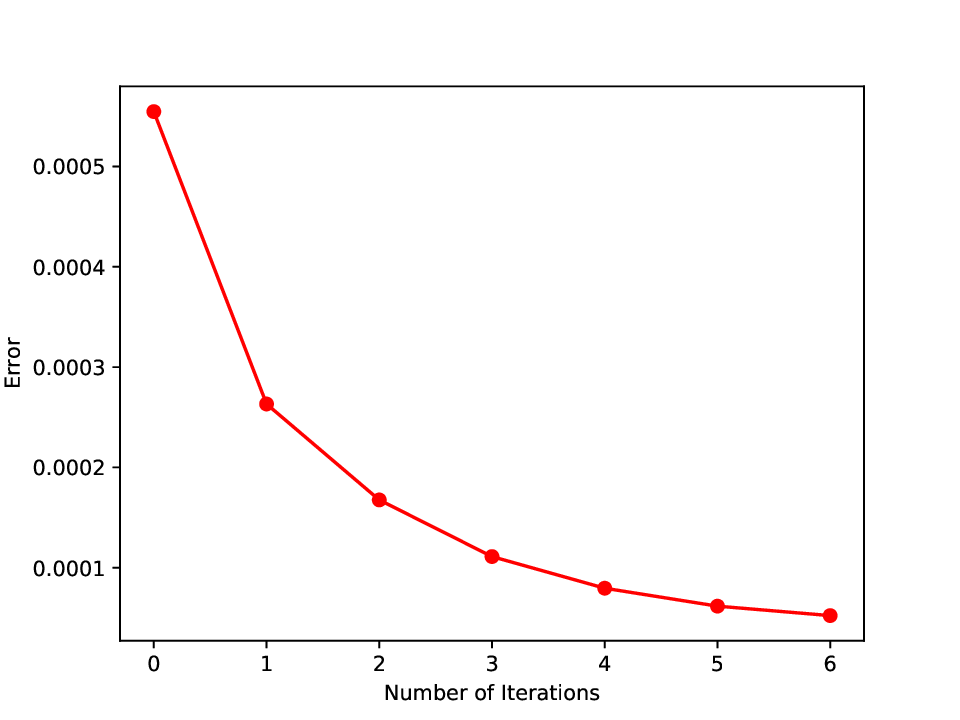}
		\subcaption{For $q=0.6$ the algorithm stops after six iterations}
	\end{subfigure}%
	\hspace{0.5cm}
	\begin{subfigure}[t]{0.35\linewidth}
	\includegraphics[width=1.0\linewidth]{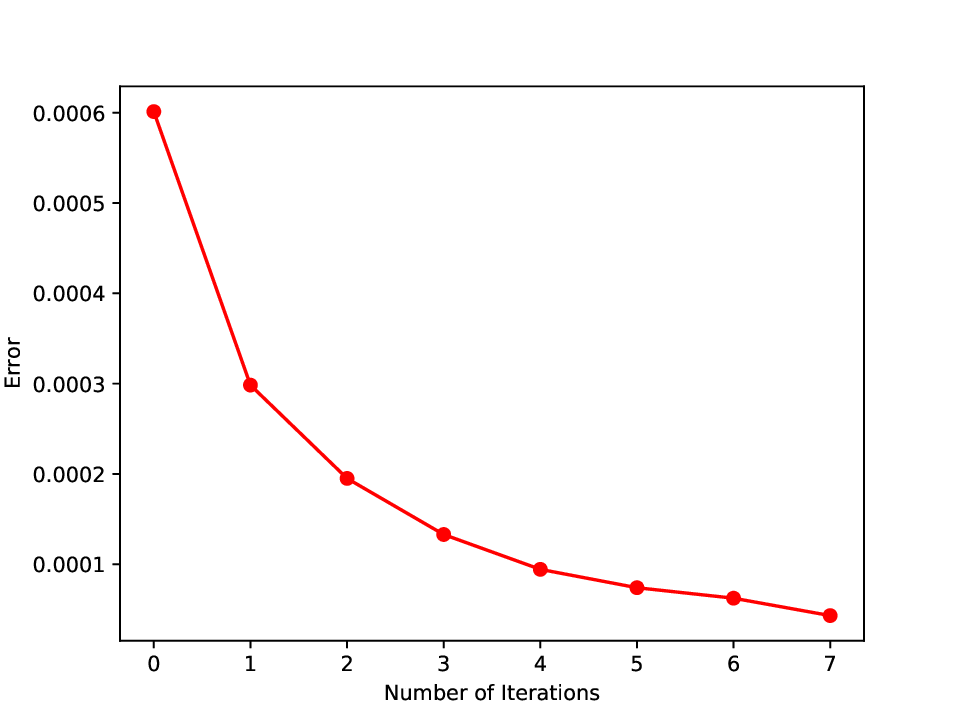}
		\subcaption{For $q=0.66$ the algorithm stops after seven iterations}
	\end{subfigure}%
	\caption{Given $0<q<2/3$ as in Theorem~\ref{th:conv}, the error $\|\tilde{\bm{\zeta}}^{\varepsilon,h_1}_{h_1^{q}}-\tilde{\bm{\zeta}}^{\varepsilon,h_2}_{h_2^{q}}\|_{\bm{H}^1_0(\omega)}$ converges to zero as $h_1, h_2\to0^+$. As $q$ increases, the number of iterations needed to meet the stopping criterion of the Cauchy sequence increases.}
	\label{fig:5}
\end{figure}

The third batch of numerical experiments validates the genuineness of the model from the qualitative point of view.
We observe that the presented data exhibits the pattern that, for a fixed $0<h<<1$ and a fixed $0< q <2/3$, the contact area increases as the applied body force intensity increases.

For the third batch of experiments, the applied body force density $\bm{p}^\varepsilon=(p^{i,\varepsilon})$ entering the model is given by $\bm{p}^\varepsilon=(0,0,\varepsilon g_\ell(y))$, where $\ell$ is a nonnegative integer and
$$
g_\ell(y):=
\begin{cases}
10^4(y_1^2+ y_2^2-0.0059 \ell), &\textup{ if } |y|^2< 0.0059 \ell,\\
0, &\textup{otherwise}.
\end{cases}
$$

The results of these experiments are reported in Figure~\ref{fig:6} below.

\begin{figure}[H]
	\centering
	\begin{subfigure}[t]{0.45\linewidth}
		\includegraphics[width=\linewidth]{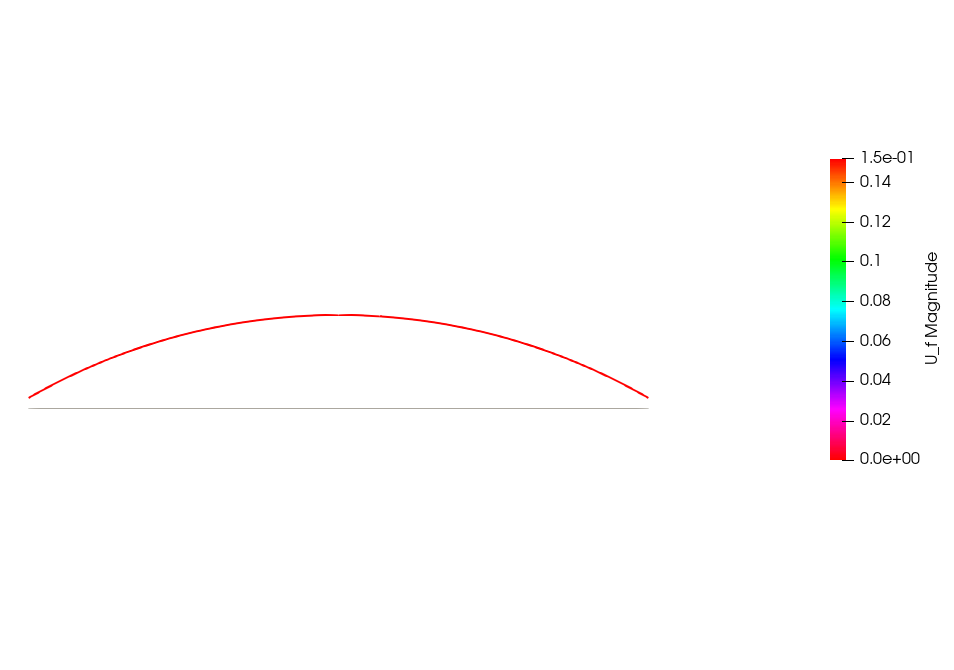}
		\subcaption{$\ell=0$}
	\end{subfigure}%
	\hspace{0.5cm}
	\begin{subfigure}[t]{0.45\linewidth}
		\includegraphics[width=1.0\linewidth]{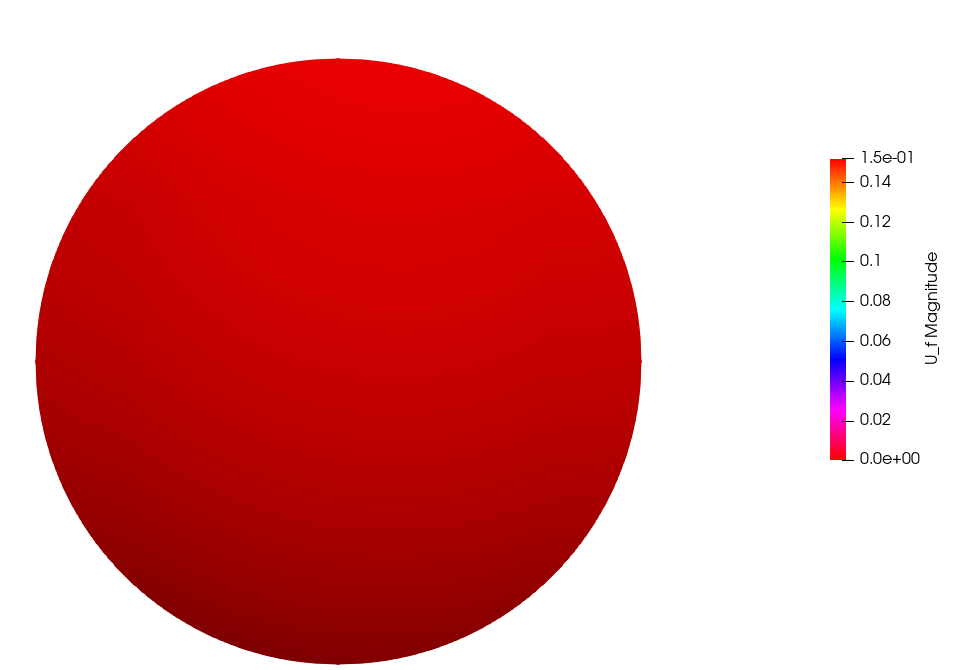}
		\subcaption{$\ell=0$}
	\end{subfigure}%
\end{figure}

\begin{figure}[H]
	\ContinuedFloat
	\centering
	\begin{subfigure}[t]{0.45\linewidth}
		\includegraphics[width=1.0\linewidth]{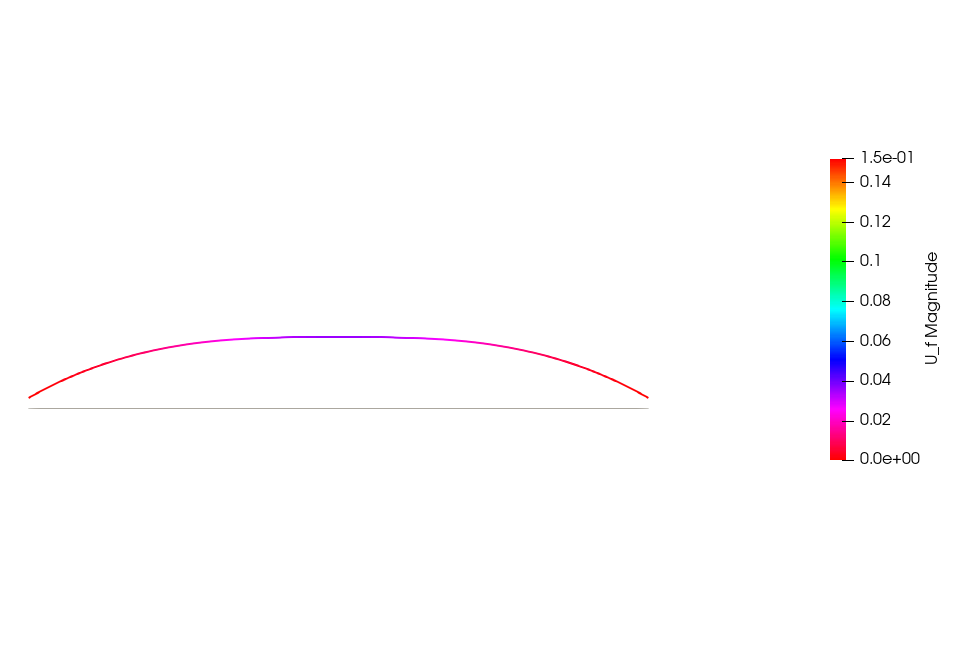}
		\subcaption{$\ell=4$}
	\end{subfigure}%
\hspace{0.5cm}
	\begin{subfigure}[t]{0.45\linewidth}
		\includegraphics[width=\linewidth]{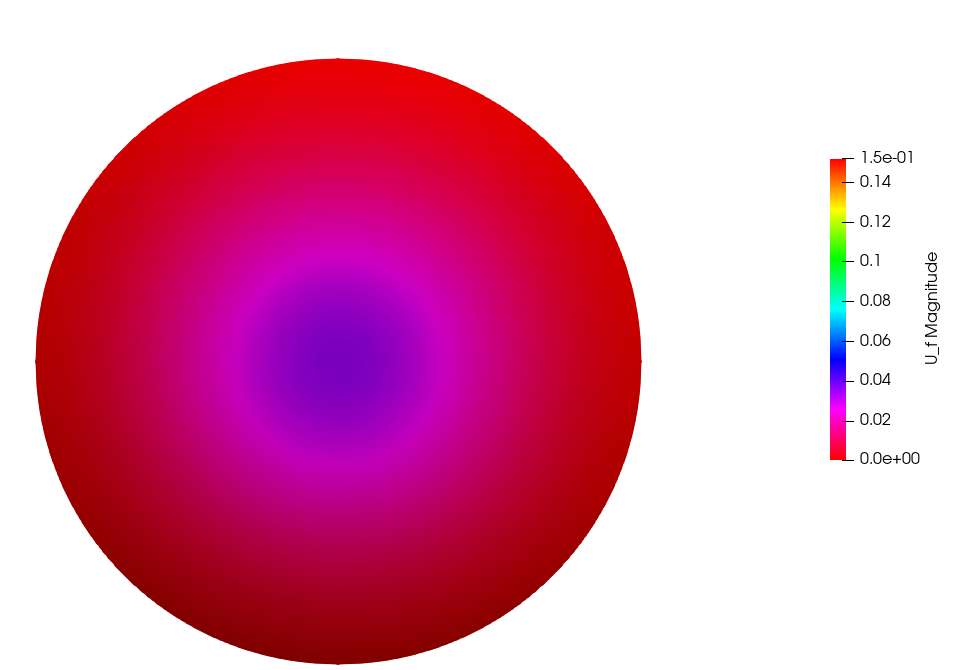}
		\subcaption{$\ell=4$}
	\end{subfigure}%
\end{figure}

\begin{figure}[H]
	\ContinuedFloat
	\centering
	\begin{subfigure}[t]{0.45\linewidth}
		\includegraphics[width=1.0\linewidth]{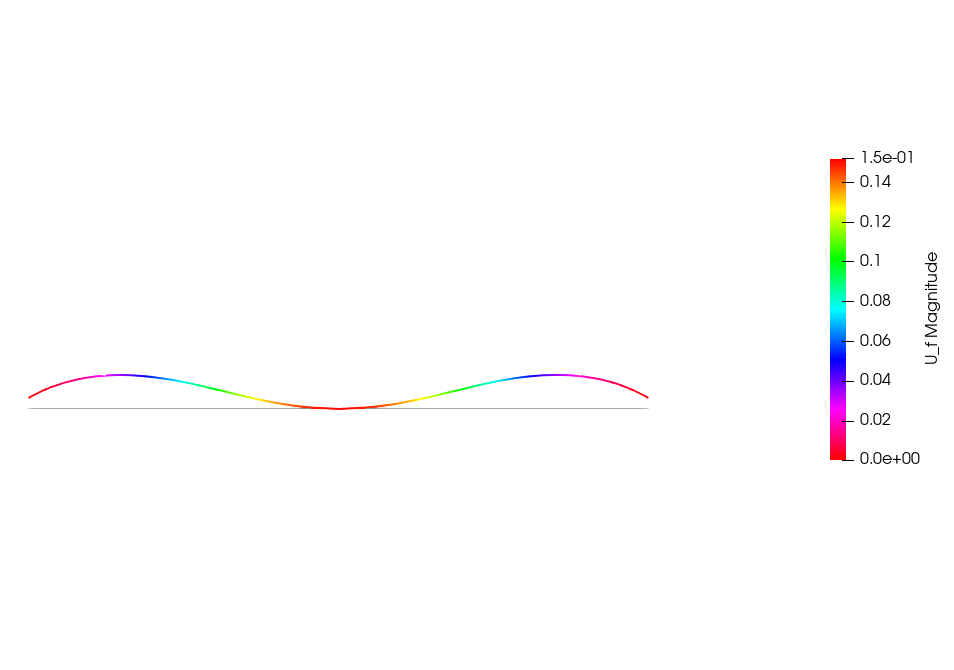}
		\subcaption{$\ell=9$}
	\end{subfigure}%
	\hspace{0.5cm}
	\begin{subfigure}[t]{0.45\linewidth}
		\includegraphics[width=\linewidth]{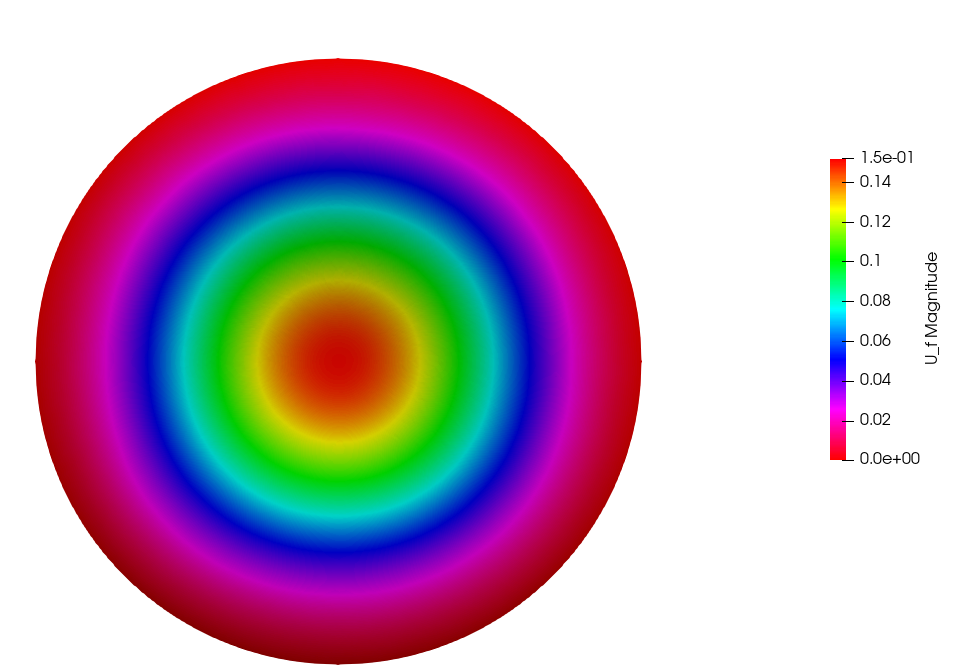}
		\subcaption{$\ell=9$}
	\end{subfigure}%
\end{figure}

\begin{figure}[H]
	\ContinuedFloat
	\centering
	\begin{subfigure}[t]{0.45\linewidth}
		\includegraphics[width=1.0\linewidth]{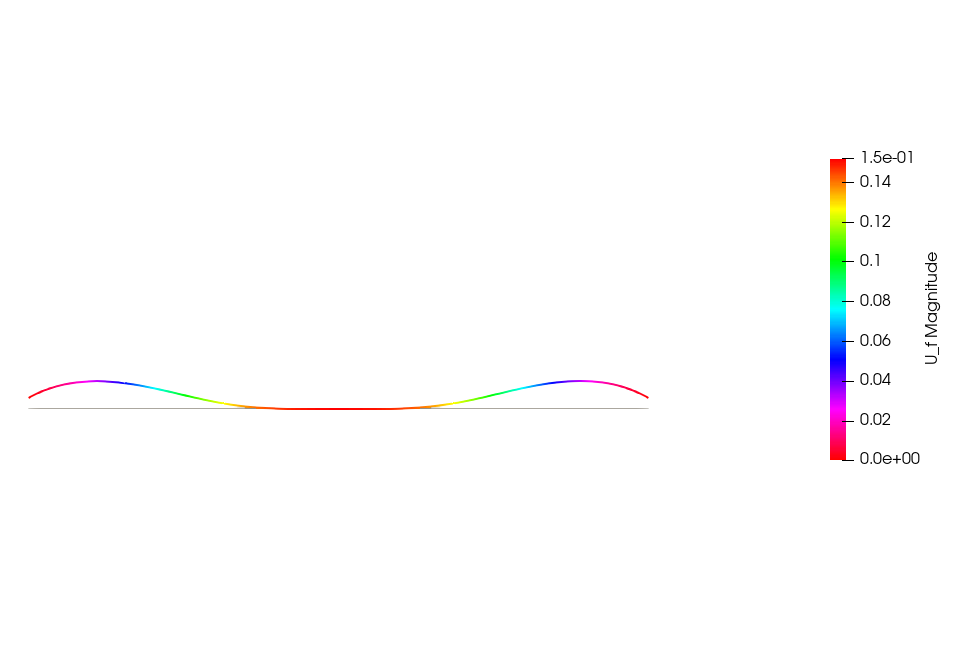}
		\subcaption{$\ell=15$}
	\end{subfigure}%
	\hspace{0.5cm}
	\begin{subfigure}[t]{0.45\linewidth}
		\includegraphics[width=\linewidth]{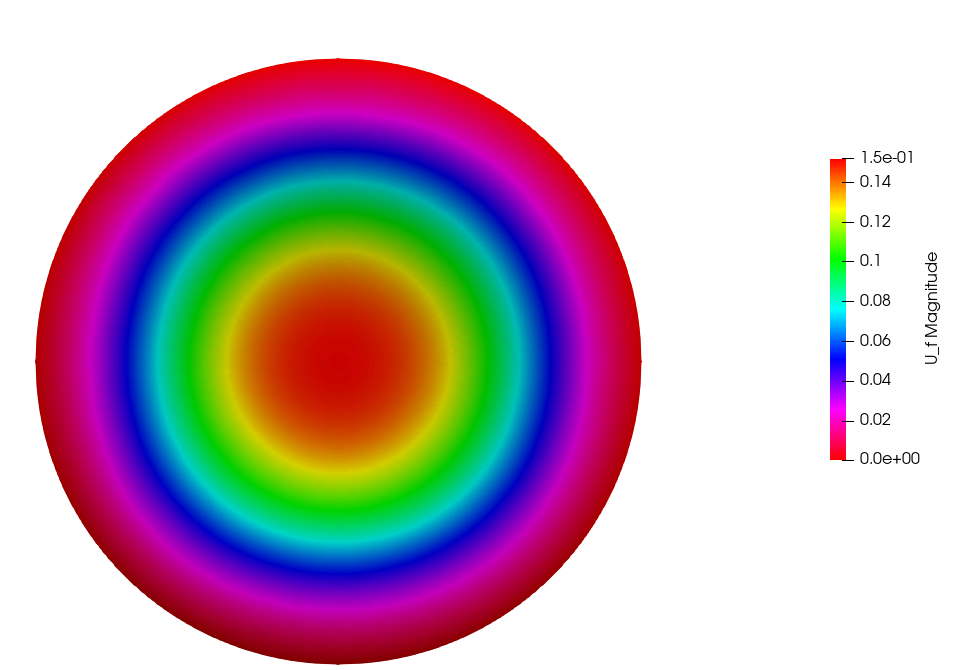}
		\subcaption{$\ell=15$}
	\end{subfigure}%
\end{figure}


\begin{figure}[H]
	\ContinuedFloat
	\centering
	\begin{subfigure}[t]{0.45\linewidth}
		\includegraphics[width=1.0\linewidth]{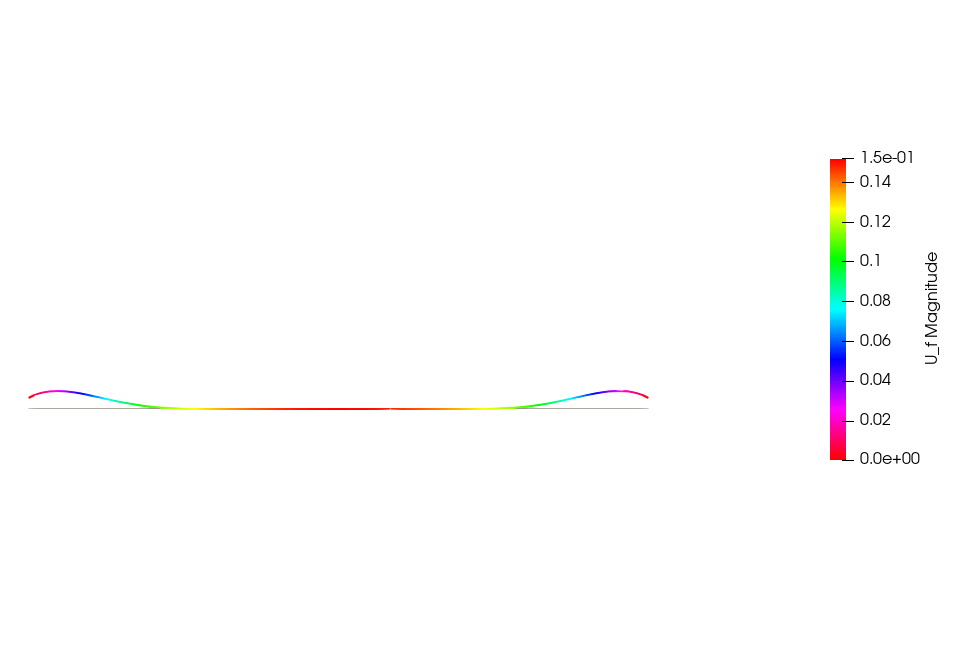}
		\subcaption{$\ell=30$}
	\end{subfigure}%
\hspace{0.5cm}
\begin{subfigure}[t]{0.45\linewidth}
	\includegraphics[width=\linewidth]{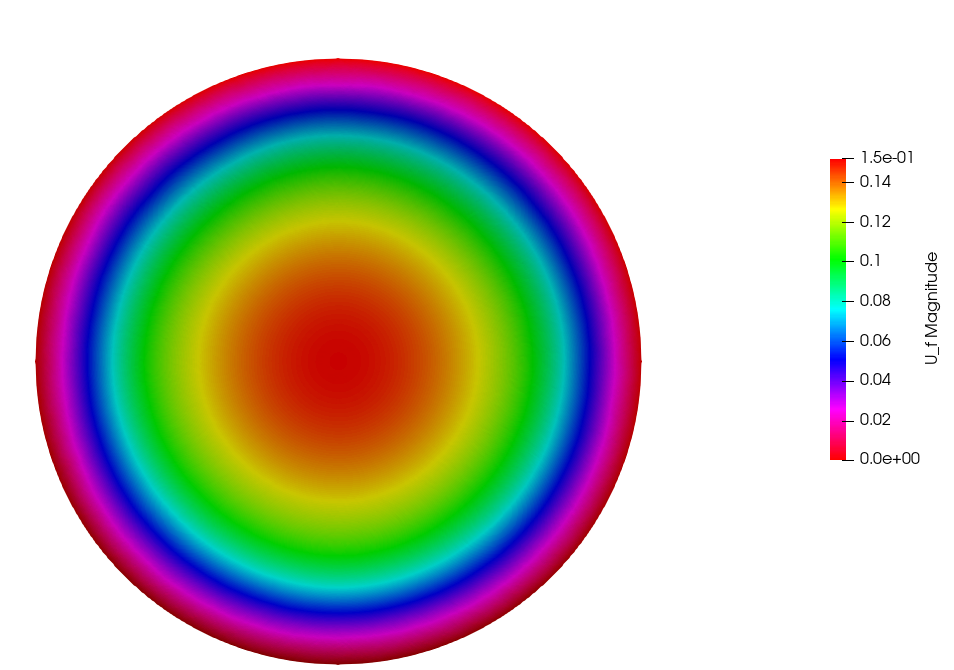}
	\subcaption{$\ell=30$}
\end{subfigure}%
	\caption{Cross sections of a deformed membrane shell subjected not to cross a given planar obstacle.
		Given $0<h<<1$ and $0<q<2/3$ we observe that as the applied body force magnitude increases the contact area increases.}
	\label{fig:6}
\end{figure}

\section*{Conclusions and Commentary}

In this paper we established the convergence of a numerical scheme based on the Finite Element Method for approximating the solution of a set of a Koiter's type variational inequality modelling the deformation of a linearly elastic elliptic membrane shell subject to remaining confined in a prescribed half space. 

After ruling out unfeasible options for carrying out the sought approximation via the Finite Element Method, we decided to opt for the intrinsic mixed formulation suggested by Blouza and his collaborators~\cite{Blouza2016}, which is based on the mathematical theory developed by Blouza \& Le Dret~\cite{BlouzaLeDret1999}.

Thanks to the intrinsic formulation proposed by Blouza \& Le Dret~\cite{BlouzaLeDret1999}, we were able to prove the convergence of the Finite Element scheme of the problem under consideration, and we were also able to implement this scheme on a computer. The numerical results obtained upon completion of three different batches of experiments corroborate the theoretical results previously derived.

\section*{Declarations}

\subsection*{Authors' Contribution}

All authors have contributed to the realisation of this manuscript in equal manner.

\subsection*{Acknowledgements}

Not applicable

\subsection*{Ethical Approval}

Not applicable.

\subsection*{Availability of Supporting Data}

Not applicable.

\subsection*{Competing Interests}

All authors certify that they have no affiliations with or involvement in any organization or entity with any competing interests in the subject matter or materials discussed in this manuscript.

\subsection*{Funding}

P.P. was partly supported by the Research Fund of Indiana University and by the Ky and Yu-Fen Fan Fund Travel Grant from the AMS (IU Award number 44-294-36 with the Simons Foundation).

X.S. was partly supported by the National Natural Science Foundation of China (NSFC.11971379), by the Distinguished Youth Foundation of Shaanxi Province (2022JC-01) and by the Ky and Yu-Fen Fan Fund Travel Grant from the AMS (IU Award number 44-294-36 with the Simons Foundation).


\bibliographystyle{abbrvnat} 
\bibliography{references}	

\end{document}